\renewcommand{\section}{\@startsection
  {section}{1}{0mm}{-1.5\baselineskip}{0.5\baselineskip}%
  {\normalfont\normalsize\large\bfseries}%
}
\renewcommand{\subsection}{\@startsection
  {subsection}{2}{0mm}{-1\baselineskip}{0.5\baselineskip}%
  {\normalfont\normalsize\itshape\bfseries}%
}
\title{Decomposition Theorems and Model-Checking for the Modal $\mu$-Calculus}
\author{\IEEEauthorblockN{Mikolaj
    Bojanczyk}%
  \IEEEauthorblockA{University of Warsaw\\%
    \href{mailto:bojan@mimuw.edu.pl}{bojan@mimuw.edu.pl}}%
  \and
  \IEEEauthorblockN{Christoph Dittmann and Stephan Kreutzer}%
  \IEEEauthorblockA{Technical University Berlin\\%
  \{\href{mailto:christoph.dittmann@tu-berlin.de}{christoph.dittmann},
  \href{mailto:stephan.kreutzer@tu-berlin.de}{stephan.kreutzer}\}@tu-berlin.de}}
\date{\today}
\newcommand{\DD}{\mathcal{D}} 
\newcommand{\st}{\mathrel : }
\renewcommand{\phi}{\varphi}
\newcommand{\Var}{\mathrm{Var}}
\newcommand{\free}{\mathrm{free}}
\newcommand{\sub}{\mathrm{sub}}
\newcommand{\ind}[1]{#1}
\newcommand{\colored}[3]{\partial_{#2}(#1,#3)}
\newcommand{\closure}{\mathrm{closure}}
\newcommand{\CL}{\mathrm{CL}}
\newcommand{\PT}{\mathrm{PT}}
\newcommand{\ctypeWithIndex}[5]{\mathrm{tp}_{#1,#2}(#3,#4,#5)}
\newcommand{\structureTypeWithIndex}[4]{\mathcal{T}_{#1,#2}(#3,#4)}
\newcommand{\allTypesWithIndex}[2]{\mathcal{T}_{#2}(#1)}
\newcommand{\prio}[3]{\mathrm{prio}_{#1}({#2}\leadsto{#3})}
\newcommand{\strategytargets}{\mathrm{\text{strategy-targets}}}
\newcommand{\strategypreprofile}{\mathrm{preprofile}}
\newcommand{\strategyprofile}{\mathrm{profile}}
\newcommand{\strategyprofiles}{\mathrm{profiles}}
\newcommand{\strategytype}[2]{\mathrm{ptype}_{#1}(#2)}
\newcommand{\defineprofile}[2]{{#1}^{#2}}
\newcommand{\MCgame}[2]{\ensuremath{\mathrm{P}[#1,#2]}}
\newcommand{\MCgameP}[3]{\ensuremath{\mathrm{P}[#1,#2,#3]}}
\newcommand{\guard}{\textup{guard}}
\newcommand{\Bdown}{\mathcal{B}^\downarrow}
\newcommand{\seq}[1]{\overline{#1}}
\begin{document}
\renewenvironment{proof}[1][]{\noindent{%
\ifthenelse{\equal{#1}{}}{{\sl Proof.\ }}{{\sl Proof #1.\ }}%
}}{\hspace*{1em}\nobreak\hfill$\Box$\endtrivlist\addvspace{2ex plus
0.5ex minus0.1ex}}

\maketitle

\begin{abstract}
  We prove a general decomposition theorem for the modal
  $\mu$-calculus $L_\mu$ in the spirit of Feferman and Vaught's
  theorem for disjoint unions. In particular, we show that if a
  structure (i.e., transition system) is composed of two substructures
  $M_1$ and $M_2$ plus edges from $M_1$ to $M_2$, then the formulas
  true at a node in $M$ only depend on the formulas true in the
  respective substructures in a sense made precise below.

  As a consequence we show that the model-checking problem for $L_\mu$
  is fixed-parameter tractable (fpt) on classes of structures of
  bounded Kelly-width or bounded DAG-width. As far as we are aware,
  these are the first fpt results for $L_\mu$ which do not follow from
  embedding into monadic second-order logic.
\end{abstract}

\section{Introduction}

The modal $μ$-calculus $L_μ$, introduced by Dexter Kozen in 1983, is a
well-known logic in the theory of verification that encompasses many
other modal logics.  Among others, propositional dynamic logic (PDL),
linear time logic (LTL) and the full branching time logic (CTL*) have
embeddings into $L_μ$. See e.g.~\cite{bradfield2007} for a survey of
the $\mu$-calculus including these results.

It seems that $L_μ$ strikes a good balance between expressivity and
complexity.  The computational complexity of the model-checking
problem, i.e., the problem of checking whether a formula $\phi\in
L_\mu$ is true at a node $v$ of a structure $M$ (in this paper we use
the term \emph{structure} for \emph{transition systems} or
\emph{Kripke structures}) is of particular interest, especially in the
field of formal verification.  The problem is polynomial-time
reducible to the problem of determining the winner of a parity game, a
certain kind of 2-player game played on directed graphs, and most
approaches for analyzing the complexity of $L_\mu$ model-checking are
based on parity games.

The problem of determining the winner of a parity game is in
$\text{NP} \cap \text{coNP}$, and in fact it is even in $\text{UP}
\cap \text{coUP}$~\cite{jurdzinski1998}. Despite 30 years of research,
the question whether parity games can be decided in polynomial time is
a long-standing open problem in the theory of logics for verification.

As a precise analysis of the classical complexity of $L_\mu$
model-checking remains elusive, we study the problem within the
framework of parameterized complexity
theory~\cite{DowneyF98,FlumGro06}. In particular, we aim at algorithms
verifying whether a formula $\phi$ is true at a node $v$ in a
structure $M$ in time $f(\phi)\cdot \abs{M}^c$, where $f$ is a
computable function from formulas into the positive integers and $c$
is a constant independent of $\phi$. Computational problems that can
be solved in this way, i.e., in time $f(k)\cdot n^c$, where $n$ is the
size of input and $k$ is a \emph{parameter} of the input, a natural
number such as length or quantifier-depth of a formula, are called
\emph{fixed-parameter tractable (fpt)} and the class of all fpt
problems is denoted FPT.

The parameterized complexity of logics such as monadic second-order
logic (MSO) or first-order logic (FO) has been well studied in the
literature, especially in the context of algorithmic
meta-theorems. See e.g.~\cite{GroheK11} for a recent survey. However,
not much is known about the parameterized complexity of $L_\mu$. As
every $L_\mu$-formula can be translated into an equivalent MSO
formula, fpt results for MSO immediately imply fpt results for
$L_\mu$.  As a consequence, $L_\mu$ is fpt on classes of structures of
bounded clique-width~\cite{CourcelleMakRot00},
bi-rank-width~\cite{corr/abs-0709-1433} or
tree-width~\cite{Courcelle90}.  However, besides these results that
follow from embedding into MSO, we are not aware of any other
tractable cases.

On the other hand, we know more about solving parity games on
restricted classes.  One of the first results in this direction was by
Jan Obdrz{\'a}lek~\cite{Obdrzalek03}, who showed that parity games of
bounded tree-width can be solved in polynomial time.  This result was
later extended to bounded clique-width~\cite{Obdrzalek07}.  Since
parity games are directed graphs, it is natural to look for graph
measures taking the direction of edges into account.  Such measures
include directed path-width~\cite{Barat06},
DAG-width~\cite{BerwangerDHKO12}, Kelly-width~\cite{hunter2008},
directed tree-width~\cite{johnson2001} and entanglement~\cite{BGKR12}.
Classes of parity games for which any of these measures is bounded can
be solved in polynomial time
(see~\cite{BerwangerDHKO12,hunter2008,BGKR12}), with the exception of
directed tree-width. Solving parity games in polynomial time on
directed tree-width is still an open problem.

A class of digraphs where the DAG- or Kelly-width is bounded also has
bounded directed tree-width.  DAG-width and Kelly-width are as yet
uncomparable concepts.  However, any class of digraphs of bounded
directed path-width has bounded Kelly- and DAG-width, which implies
polynomial time solvability of parity games of bounded directed
path-width by the results cited above.

\smallskip\noindent\textbf{Our contributions. }  The aim of this paper
is to develop the logical and algorithmic tools for proving
fixed-parameter tractability of $L_\mu$-model-checking on special
classes of structures such as classes of bounded Kelly-width.

Such classes already contain natural and interesting examples of
transition systems. However, we see our work also as a first step in a
more general program of showing that $L_\mu$-model-checking is fpt in
general. For this, it is easily seen that it suffices to solve the
problem on planar structures. We therefore aim, as a next step, to
show that it is fpt on classes of planar structures of bounded
directed tree-width. A general duality theorem \cite{KawarabayashiK14}
states that if the directed tree-width is high, then the structure
contains a grid-like substructure. In the planar case, this yields a
natural decomposition of the structure into smaller substructures
which can possibly be exploited for solving $L_\mu$-model-checking for
structures of very high directed tree-width. The techniques we develop
in this paper are a first step towards this goal and we believe that
they will prove useful for classes of structures beyond bounded
Kelly-width or bounded DAG-width.

Furthermore, besides the algorithmic applications, we believe that the
decomposition theorems we establish below may be of independent
interest.

\smallskip

\noindent\textit{Main contributions to logic of this paper. }
An important logical tool in the analysis of the parameterized
complexity of model checking for FO or MSO are \emph{decomposition
  theorems}, also referred to as Feferman-Vaught style theorems
(see~\cite{makowsky2004} for a comprehensive survey).  Whereas for FO
and MSO a range of such theorems are known, much less seems to be
available for $L_\mu$.  In this paper we prove a general decomposition
theorem for $L_μ$ that allows us to compute the formulas true at a
node in a structure from the formulas true at the nodes in some
induced substructures.  Our theorem is similar in spirit to the
theorem by Feferman and Vaught on disjoint unions~\cite{feferman1959}.
As far as we are aware, no such theorem was known for $L_μ$ prior to
our work.

The first step for such a theorem is finding a useful notion for the
``depth'' of a formula, so that up to equivalence there are only
finitely many formulas up to a given depth, and that the types of the
nodes in the full structure can be computed from the types of the
nodes in some induced substructures.  We propose the notion of
\emph{$\mu$-depth} that satisfies both constraints.

In this paper we study the construction of a structure $M$ from two
structures $M_1$ and $M_2$ where $M$ is defined as the union of $M_1$
and $M_2$ plus an arbitrary set of edges from $M_1$ to $M_2$. We call
the pair $(M_1, M_2)$ a \emph{directed separation} of $M$ and refer to
the intersection $M_1\cap M_2$ as the \emph{interface}. See
\cref{def:dir-separation} for details. Let $(M_1, M_2)$ and $(M_1,
M'_2)$ be two directed separations with interface $X$ as defined
above. Note that both have the same left-hand side $M_1$.  For a given
$\mu$-depth $\delta$, we define a notion of $\delta$-equivalence on
these separations. The main ingredient of $\delta$-equivalence is that
$M_2$ and $M_2'$ realize the same $L_\mu$-types up to $\mu$-depth
$\delta$, when the interface nodes are indicated with special
predicates. See \cref{def:L_equivalent} for details.

\begin{theorem*}[\cref{thm:types_suffice-delta}]
  Let $\delta$ be a $\mu$-depth, and let $M=(M_1,M_2)$,
  $M'=(M_1,M_2')$ be $\delta$-equivalent directed separations. Then
  for every node in $M_1$, the set of formulas of depth $\delta$ that
  it satisfies is the same in $M$ and in $M'$.
\end{theorem*}

The theorem, apart from its purely logical appeal, also has
applications for $L_\mu$-model checking.  The notion of equivalent
structures $(M_1, M_2)$ and $(M_1, M_2')$ can also be read in the way
that, given a huge structure $(M_1, M_2)$, we can replace $M_2$ by a
much smaller structure as long as it realizes the same types up to a
certain depth. This will be the main tool in our algorithmic
applications.

\smallskip

\noindent\textit{Applications to $L_\mu$-model checking. }
Based on our decomposition theorems above, we show that $L_\mu$-model
checking is fpt on classes of structures of bounded Kelly-width or
bounded DAG-width, provided a decomposition is given as part of the
input.

\smallskip

\noindent\textbf{Relation to other work. }
A natural idea for solving $L_\mu$-model-checking on a class $\CCC$ of
structures of bounded Kelly-width would be to reduce the problem to
parity games and apply the polynomial-time algorithms for solving
parity games of bounded Kelly-width.  However, the degree of the
polynomial-time algorithms for parity games
in~\cite{BerwangerDHKO12,hunter2008} depends on the upper bound for
the Kelly- or DAG-width of the games considered.  By combining a
structure of Kelly-width $k$ and a formula $\phi$ into a parity game,
the resulting game may have Kelly-width in the order of $k\cdot
\abs{\phi}$.  Hence, by translating into parity games we would not
obtain fpt algorithms.

The polynomial-time algorithms for parity games developed
in~\cite{Obdrzalek03,BerwangerDHKO12,hunter2008} all rely in some way
on the concept of \emph{borders}, \emph{strategy profiles} and
\emph{interfaces} developed first in~\cite{Obdrzalek03}, the paper on
parity games on bounded tree-width.  Our results also make crucial use
of these concepts. The main technical challenge we need to solve is
that for our decomposition theorems we need these profiles to be
definable in the $\mu$-calculus in a uniform way, which was not
necessary in the algorithmic papers on parity games.


\section{A Decomposition Theorem for \texorpdfstring{$L_\mu$}{L\_mu}}
\label{sec:definitions}

In this section we present the statement of our decomposition theorem
for the $\mu$-calculus.  We propose a notion of depth for formulas of
the $\mu$-calculus and then state \cref{thm:types_suffice-delta},
which says that this notion of depth is exactly what we want for our
decompositions.

\subsection{Syntax and Semantics of the Modal
  \texorpdfstring{μ}{mu}-Calculus}
\label{sec:syntax_semantics_mu}

We use the usual definition of the modal μ-calculus $L_μ$, see for
example in the comprehensive survey~\cite{bradfield2007}.  Let us
briefly review these definitions.

Let $\Var$ be an infinite set of \emph{fixpoint variables} and
$\sigma$ be a \emph{signature}, that is a set of \emph{atomic
  propositions}.  We define the formulas of $L_μ[\sigma]$ recursively.
\begin{itemize}
\item $⊤,⊥ ∈ L_μ[\sigma]$.
\item For all $P ∈ \sigma$, $P, \neg P ∈ L_μ[\sigma]$.
\item For all $X ∈ \Var$, $X ∈ L_μ[\sigma]$.
\item For all formulas $φ, ψ$, $(φ ∧ ψ), (φ ∨ ψ) ∈ L_μ[\sigma]$.
\item For all formulas $φ$, $(□φ), (◊φ) ∈ L_μ[\sigma]$.
\item For all formulas $φ$ and $X ∈ \Var$, $(μX.φ), (νX.φ) ∈
  L_μ[\sigma]$.
\end{itemize}
We omit brackets and $\sigma$ if there is no confusion.  With this
definition all formulas are in \emph{negation normal form}, that is,
negations may only appear in front of propositions.  There are more
general definitions of $L_μ$ with regard to negation, but every such
formula is equivalent to a formula in negation normal form.

The semantics of the $\mu$-calculus is defined on $\sigma$-structures,
also known as labelled transition systems or Kripke structures.
\begin{definition}
  A \emph{$\sigma$-structure} $M$ over a signature $\sigma$ is a
  directed graph together with a distinguished set of vertices $X(M)$
  for every $X \in \sigma$.

  We often use $M,v$, that is, a structure $M$ together with a
  distinguished node $v ∈ V(M)$.
\end{definition}

We use standard notation from model theory and graph theory.  In
particular, for $X ⊆ V(M)$, we write $M[X]$ for the substructure
induced by $X$.

The notion of a fixpoint variable being free or bound in a formula is
defined the standard way.  We write $\free(φ)$ for the set of free
fixpoint variables of $φ$.  Let $\varphi$ be a formula of
$L_\mu[\sigma]$.  To evaluate this formula, we use a $\tau$-structure
$M$ together with a distinguished vertex $v$ for some $\tau \supseteq
\sigma$.  The semantics relation $M,v \models \varphi$ is defined by
induction on $\varphi$ as follows.
\begin{itemize}
\item $M,v ⊧ ⊤$ and $M,v ⊭ ⊥$.
\item $M,v ⊧ P$ iff $v ∈ P(M)$ and $M,v ⊧ \neg P$ iff $v ∉ P(M)$ for
  $P ∈ \tau$.
\item $M,v ⊧ φ ∨ ψ$ iff $M,v ⊧ φ$ or $M,v ⊧ ψ$.
\item $M,v ⊧ φ ∧ ψ$ iff $M,v ⊧ φ$ and $M,v ⊧ ψ$.
\item $M,v ⊧ ◊φ$ iff there is $(v,w) ∈ E(M)$ with $M,w ⊧ φ$.
\item $M,v ⊧ □φ$ iff for all $(v,w) ∈ E(M)$, $M,w ⊧ φ$.
\item $M,v ⊧ μX.φ$ iff \[ v ∈ \bigcap\set[S ⊆ V(M)]{S \supseteq
    \set[v]{M[X/S],v ⊧ φ}}. \]
\item $M,v ⊧ νX.φ$ iff \[ v ∈ \bigcup\set[S ⊆ V(M)]{S \subseteq
    \set[v]{M[X/S],v ⊧ φ}}. \]
\end{itemize}
where $M[X/S]$ is the $\tau \cup \set{X}$-structure defined as $M$
extended by the interpretation $X(M[X/S]) \coloneq S$.

\subsection{A Notion of Formula Depth for the
  \texorpdfstring{$\mu$}{mu}-Calculus}

\begin{definition}\label{def:consistent_fixpoint_labelling}
  Let $\seq X = (X_1,\ldots,X_n)$ be a finite sequence of fixpoint
  variables.  A formula $\varphi ∈ L_μ$ is called \emph{consistent}
  with $\seq X$ if all fixpoint variables of $\varphi$ (free and
  bound) are in the sequence, and in every subformula $\psi$ of
  $\varphi$ that binds a fixpoint variable $X_i$, only the variables
  $X_1,\ldots,X_{i}$ can appear freely in $\psi$.
\end{definition}

\begin{figure}[ht]
  \centering
  \includegraphics[scale=.6]{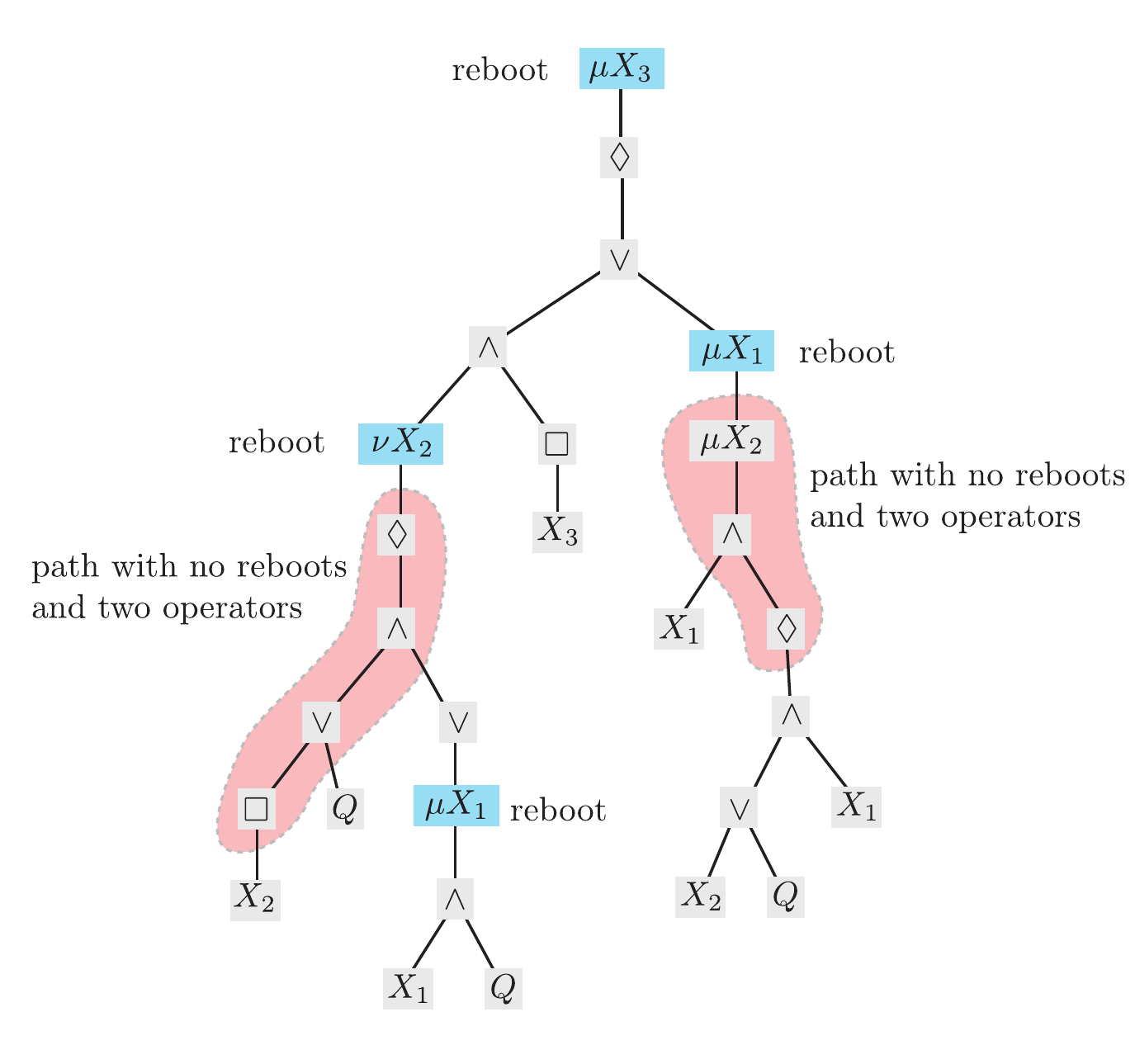}
  \caption{An example for reboots}
\label{fig:reboot}
\end{figure}

A node $x$ in the syntax tree of a formula that is consistent with
$\seq X$ is called a \emph{reboot} if the subformula in the node binds
a fixpoint variable $X_i$ such that no ancestor of $x$ binds any of
the fixpoint variables $\{X_1,\ldots,X_i\}$. The \emph{$\seq X$-depth}
of a formula is the biggest number of occurrences of operators from
the set $□, ◊, \mu, \nu$ that can be found on a path in the syntax
tree that does not visit reboot nodes. The $\seq X$-depth is undefined
if the formula is not consistent with $\seq X$.  \Cref{fig:reboot}
shows a formula which has $(X_1,X_2,X_3)$-depth~2.

The definition is designed so that $\mu X.φ$ and $φ[X/\mu X.φ]$ have
the same $\seq X$-depth.

For a set $L \subseteq L_\mu$, define the \emph{$L$-type} of a vertex
in a structure to be the set of formulas from $L$ that are true at the
vertex.  A \emph{$\mu$-depth} is a pair $\delta=(\seq X,d)$ where
$\seq X$ is a sequence of fixpoint variables and $d$ is a natural
number.  A formula is called \emph{consistent with $\delta$} if it is
consistent with $\seq X$ and its $\seq X$-depth is at most $d$. The
$\delta$-type of a vertex in a structure is its $L$-type, with $L$
being the set of all formulas consistent with $\delta$.  This
information is finite thanks to the following lemma.

\begin{lemma}
	For every $\mu$-depth $\delta$ and finite set of propositional
  variables, up to logical equivalence there are finitely many
  formulas in these propositional variables that are consistent with
  $\delta$.
\end{lemma}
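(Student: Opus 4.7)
The plan is to prove a stronger, parameterised statement by a nested induction. I introduce a \emph{context} $c \subseteq \{1, \ldots, n\}$ of ambient (outer) binders and a set $F \subseteq \{X_1, \ldots, X_n\}$ of allowed free variables, and show: for every $c$, every $F$, and every $d \geq 0$, the set of formulas $\varphi \in L_\mu[\sigma]$ whose fixpoint variables lie in $\{X_1, \ldots, X_n\}$, whose free variables lie in $F$, that satisfy the consistency condition of \cref{def:consistent_fixpoint_labelling}, and whose $\seq X$-depth computed with the \emph{contextual} reboot rule (a binder $\mu X_k$ in $\varphi$ is a reboot iff $c \cap \{1, \ldots, k\} = \emptyset$ and no ancestor of that binder inside $\varphi$ binds any of $X_1, \ldots, X_k$) is at most $d$, is finite up to logical equivalence. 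The original lemma is the instance $c = \emptyset$, $F = \{X_1, \ldots, X_n\}$.

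The induction is lexicographic on $(n - |c|, d)$. The outer base $|c| = n$ is immediate: $c \cap \{1, \ldots, k\}$ is nonempty for every $k \geq 1$, so no $\mu X_k$ can be a reboot, and under the standard no-variable-capture convention no fixpoint binder can be introduced at all; thus $\varphi$ reduces to a modal formula of modal depth at most $d$ over a fixed finite vocabulary, a classical finite set up to equivalence. For the outer inductive step I run an inner induction on $d$. Write $\varphi$ as a boolean combination of its top-level non-boolean pieces: atoms and free variables, modal pieces $\Box\psi$ or $\Diamond\psi$, and fixpoint pieces $\mu X_j.\chi$ or $\nu X_j.\chi$. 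Since boolean combinations of finitely many pairwise inequivalent building blocks form a finite set up to equivalence by truth tables ($\leq 2^{2^m}$ classes over $m$ atoms), it suffices to bound each family. A modal piece $\Box\psi$ falls to the inner IH: $\psi$ lies in the same class at contextual depth at most $d-1$. For a fixpoint piece $\mu X_j.\chi$, the body $\chi$ is to be analysed in the strictly larger context $c \cup \{j\}$ (by no-capture, $j \notin c$, so $n - |c \cup \{j\}|$ drops by one) and in the restricted free-variable set $(F \cap \{X_1, \ldots, X_j\}) \cup \{X_j\}$; whether or not $\mu X_j$ is a reboot in context $c$, the contextual depth of $\chi$ in context $c \cup \{j\}$ is at most $d$, so the outer IH applies and yields finitely many $\chi$ up to equivalence, hence finitely many fixpoint pieces for the $n$ choices of $j$ and the two choices of $\mu$ versus $\nu$. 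The inner base $d = 0$ is a specialisation: neither modal pieces nor non-rebooting fixpoint pieces can sit at the top since each forces depth at least $1$, leaving only atoms and rebooting fixpoint pieces, handled identically.

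The main obstacle, and the reason for introducing $c$ at all, is the case of a top-level fixpoint piece $\mu X_j.\chi$ that is \emph{not} a reboot in context $c$ (which forces $c \neq \emptyset$ and $j \geq \min c$): there $\mu X_j$ does contribute to the depth along paths through the root, yet the body $\chi$ may still harbour reboot-free regions of full depth $d$, so a direct induction on $d$ alone would not reduce the problem. Inducting primarily on $n - |c|$ side-steps this because every fixpoint binder, reboot or not, strictly enlarges $c$. A minor bookkeeping check is that $c$ and $F$ each take only finitely many values throughout the recursion, which is immediate as both range over subsets of $\{1, \ldots, n\}$.
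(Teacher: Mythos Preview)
Your argument rests on the ``no-variable-capture convention,'' which you invoke both in the base case (``no fixpoint binder can be introduced at all'' when $|c|=n$) and in the inductive step (``by no-capture, $j\notin c$''). That convention is not available in the paper's setting: consistency with $\seq X=(X_1,\ldots,X_n)$ does \emph{not} forbid rebinding a variable inside its own scope, and the paper's own running example $\nu Y.\Diamond(\mu X.\nu Y.\Diamond X\vee\Diamond Y)\vee\Diamond Y$, declared consistent with $(X,Y)$, exhibits a $\nu Y$ inside another $\nu Y$. Because the fixpoint variables are pinned to the finite list $X_1,\ldots,X_n$, you cannot alpha-rename your way out of a rebinding without leaving the class of $\seq X$-consistent formulas. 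Concretely, with $n=1$ and $c=\{1\}$ you are in your base case, yet $\varphi=\mu X_1.X_1$ is a perfectly good $(X_1)$-consistent formula of contextual depth $1$ containing a fixpoint binder, contradicting your base-case claim. In the inductive step the same example (reached after one unfolding from $c=\emptyset$) gives a top-level $\mu X_j$ with $j\in c$, so $n-|c\cup\{j\}|$ does not drop and the lexicographic order stalls; the contextual depth of the body does not drop either, so neither coordinate decreases.

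The paper bypasses this entirely with a one-line reduction: along any root-to-leaf path the indices of the reboot binders are strictly decreasing (if a reboot binds $X_i$, no ancestor binds any of $X_1,\ldots,X_i$, so a deeper reboot on the same path must bind some $X_{i'}$ with $i'<i$), hence at most $n$ reboots occur on the path; the reboot-free segments between them carry at most $d$ operators each, so the \emph{standard} depth is at most $d\cdot(n+1)$, and finiteness up to equivalence for bounded standard depth is classical. This argument is indifferent to rebinding. If you want to rescue your induction, you would first need a separate lemma showing that every $\seq X$-consistent formula of $\seq X$-depth at most $d$ is logically equivalent to one, still $\seq X$-consistent and of $\seq X$-depth at most $d$, that has no rebinding; you have not supplied such a lemma, and it is not obvious.
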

\begin{proof}
  Define the \emph{standard depth} of a formula to be the biggest
  number of operators from $□, ◊, \mu, \nu$ on any path in the syntax
  tree. It is not difficult to see that, when the set of propositional
  variables is fixed, up to logical equivalence there are finitely
  many formulas of given standard depth.  If $\delta = (\seq X,d)$
  with $\seq X = (X_1,\ldots,X_n)$, then a formula with $\seq X$-depth
  $d$ has standard depth at most $d \cdot (n+1)$, so the result
  follows.
\end{proof}

Although the set in the statement of the above lemma is finite, its
size is non-elementary with respect to $\delta$. 

\subsection{Decompositions of Directed Separations}
As promised in the introduction, we will prove a decomposition theorem
for the union of two structures with a small intersection and some
additional edges all going in the same direction.  To formalize this,
we introduce \emph{directed separations}.

\begin{definition}\label{def:dir-separation}
  Let $M$ be a $\sigma$-structure.  A pair $(M_1,M_2)$ of induced
  substructures is a \emph{directed $\sigma$-separation of $M$ with
    interface $\seq X = (x_1,\ldots,x_k)$} if
  \begin{itemize}
  \item $V(M) = V(M_1) \cup V(M_2)$,
  \item $X = \set{x_1,\ldots,x_k} = V(M_1) \cap V(M_2)$,
  \item and there are no edges from $M_2 \setminus X$ to $M_1
    \setminus X$.
  \end{itemize}
\end{definition}
Abusing notation, we write $M = (M_1,M_2)$ to denote that $(M_1,M_2)$
is a directed separation of $M$, and notationally we consider
$(M_1,M_2)$ to be interchangeable with $M$.

For some $k$, let $\seq{P} = P_1,\ldots,P_k$ be a sequence of fresh
proposition symbols.  For a $\sigma$-structure $M$ and a $k$-tuple
$\seq{X} = (x_1,\ldots,x_k) ∈ V(M)^k$, we define $\colored{M}{\seq
  P}{\seq X}$ to be the $\sigma \cup P$-structure based on $M$ such
that $P_i$ is true only at the node $x_i$.  If the sequence $\seq P =
P_1,\ldots,P_k$ is longer than $\seq X = (x_1,\ldots,x_l)$, then
$\colored{M}{\seq P}{\seq X}$ is defined the same except that $P_i$ is
always false for $i > l$.

\begin{definition}\label{def:L_equivalent}
  Let $(M_1,M_2)$, $(M_1,M_2')$ be two directed separations with the
  same interface $\seq X$.  Let $\seq P$ be a set of $\abs{X}$~many
  proposition symbols and let $L ⊆ L_μ[\sigma \cup P]$.

  We call $(M_1,M_2)$, $(M_1,M_2')$ \emph{$L$-equivalent} if
  \begin{itemize}
  \item for every vertex in $X$, its $L$-type is the same in $\colored
    {M_2}{\seq P}{\seq X}$ and $\colored{M_2'}{\seq P}{\seq X}$,
    respectively, and
  \item for every edge $(v,w)$ in $(M_1,M_2)$ with $v ∈ M_1, w ∈ M_2$
    there is an edge $(v,w')$ in $(M_1,M_2')$ with $w ∈ M_2'$ such
    that $w$ and $w'$ have the same $L$-types in $\colored{M_2}{\seq
      P}{\seq X}$ and $\colored{M_2'}{\seq P}{\seq X}$, respectively,
    and vice versa.
  \end{itemize}
\end{definition}

If $\delta$ is a $\mu$-depth, we say that two directed separations are
\emph{$\delta$-equivalent} if they are $L$-equivalent with $L$ being
all formulas consistent with $\delta$.  Let us state our main theorem.

\begin{theorem}\label{thm:types_suffice-delta}
	Let $\delta$ be a $\mu$-depth, and let $M=(M_1,M_2)$,
  $M'=(M_1,M_2')$ be $\delta$-equivalent directed separations. Then
  for every node in $M_1$, its $\delta$-type is the same in $M$ and
  $M'$.
\end{theorem}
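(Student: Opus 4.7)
The plan is to argue via the parity model-checking game for $L_\mu$: recall that $M,v \models \varphi$ iff Eve wins the game on positions $(w,\psi)$ (a vertex of $M$ paired with a subformula of $\varphi$), starting from $(v,\varphi)$, under the standard parity condition determined by the fixpoint variables. I aim to transfer Eve's, and by symmetry Adam's, winning strategy on $M$ into a winning strategy on $M'$, using $\delta$-equivalence to replace each move that lands in $M_2$ with a $\delta$-type-equivalent move into $M'_2$.

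The outer induction is on the depth $d$ of $\delta = (\seq X, d)$, with an inner structural induction on the formula. The atomic and Boolean cases are immediate because $M_1$ is a common induced substructure of $M$ and $M'$. For $\Diamond \psi$ at $v \in M_1$: if Eve's winning move picks a successor $w \in M_1$, the same successor is available in $M'$; if it picks $w \in M_2 \setminus X$, the edge clause of \cref{def:L_equivalent} supplies $w' \in M'_2$ with an $M'$-edge $v \to w'$ and with the same $\delta$-type as $w$ in $\colored{M_2}{\seq P}{\seq X}$, resp.\ $\colored{M'_2}{\seq P}{\seq X}$. Since $\psi$ is a subformula of $\Diamond\psi$, it is itself consistent with $\delta$, so the agreement of $\delta$-types at $w$ and $w'$ transfers to agreement on $\psi$. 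The $\Box$ case is dual.

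The essential difficulty is the fixpoint case $\mu X_i.\psi$ (or $\nu X_i.\psi$), because unfolding does \emph{not} reduce $\seq X$-depth. In the parity game a fixpoint position passes control to its body, and variable occurrences loop back to the binder, with the winning condition depending on the smallest-index fixpoint variable visited infinitely often. The transfer argument must therefore preserve parity winning of \emph{entire infinite plays}, not merely truth at a single position. To do this I would maintain the invariant that Eve's virtual play on $M$ and her actual play on $M'$ either coincide on a vertex in $M_1$, or sit at vertices $w \in M_2 \setminus X$, $w' \in M'_2 \setminus X$ whose $\delta$-types agree in the marked structures. The directed-separation condition forbids edges from $M_2 \setminus X$ back to $M_1 \setminus X$, so a play that has entered the $M_2$-side can only re-emerge through interface nodes in $X$, which are literally shared and have matching $\delta$-types by the first clause of \cref{def:L_equivalent}.

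The hardest part, I expect, is arguing that the $\delta$-type of a node in $\colored{M_2}{\seq P}{\seq X}$ encodes enough information to simulate entire subplays that return through the interface, while keeping the parity bookkeeping intact. The marking by fresh propositions $\seq P$ is precisely what lets formulas in the $\delta$-type express how a subplay interacts with $M_1$ through interface nodes, and one must show that two $\delta$-equivalent positions admit winning strategies with matching exit behaviour at $X$. This is where a suitably $\mu$-calculus-definable notion of \emph{profile} in the sense of~\cite{Obdrzalek03} would do the heavy lifting, reflecting the comment in the introduction that such profiles must be uniformly $L_\mu$-definable.
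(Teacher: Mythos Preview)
Your structural induction has a genuine gap in the modal case, and it is precisely the gap the theorem is meant to close. When Eve's move at $v \in M_1$ lands on $w \in M_2 \setminus X$, the edge clause of \cref{def:L_equivalent} gives you a $w' \in M_2'$ whose $\delta$-type agrees with that of $w$ \emph{in the marked substructures} $\colored{M_2}{\seq P}{\seq X}$ and $\colored{M_2'}{\seq P}{\seq X}$. You then write that this ``transfers to agreement on $\psi$'', but agreement on $\psi$ in those substructures is not what you need: you need $M,w \models \psi$ iff $M',w' \models \psi$, i.e., agreement in the \emph{full} structures. These differ, because from $w$ one can reach $X$ and from there re-enter $M_1 \setminus X$; truth of $\psi$ at $w$ in $M$ genuinely depends on $M_1$, whereas truth in $\colored{M_2}{\seq P}{\seq X}$ does not. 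So your inductive step is assuming exactly the kind of transfer the theorem asserts, and the induction does not close. The same circularity undermines the invariant you propose for the fixpoint case: ``$\delta$-types agree in the marked substructures'' is not by itself an invariant that controls the parity of a play in $M$ versus $M'$, because the play may repeatedly cross back into $M_1$ via $X$.

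Your closing paragraph correctly identifies what is missing --- a $\mu$-calculus-definable notion of profile --- but this is the entire technical content of the proof, not an afterthought. The paper carries it out as follows. For each $\psi \in \CL(\varphi)$ and each candidate profile $y$ (a set of interface positions paired with worst-priority bounds), one writes down an explicit formula $\psi^y$, obtained from $\psi$ by inserting at each $\Diamond$/$\Box$ subformula a disjunction/conjunction over the interface-marking propositions $P_i$ according to whether exiting there with the accumulated priority is permitted by $y$ (\cref{def:psi_y}). One then proves (\cref{thm:definable_profiles}) that $\colored{M_2}{\seq P}{\seq X}, v \models \psi^y$ holds iff Eve has a partial winning strategy in the model-checking subgame on $M_2$ whose profile is at least as good as $y$. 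The key structural point is that every such $\psi^y$ is a \emph{priority-tracking variant} and hence lies in $\CL_P(\{\varphi\})$, which by \cref{lem:CL_consistent} stays within the formulas consistent with $\delta$. Thus the $\delta$-type of a node in the marked $M_2$ determines its set of optimal profiles (\cref{cor:type_determined_by_type}), which in turn suffices to build a small simulating game $P^\varphi$ depending only on those types (\cref{thm:small_simulation}); replacing the $M_2$-subgame by $P^\varphi$ via the Simulation Lemma then yields the same game whether one starts from $M_2$ or $M_2'$. In short: the markings $\seq P$ are not there so that ordinary subformulas talk about the interface, but so that these specially crafted priority-tracking variants do; making that precise is the proof.
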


In fact, we will prove a more general version of
\cref{thm:types_suffice-delta}, without limiting us to $\mu$-depth.
It turns out that there exists a suitable closure operator $\CL_P:
2^{L_μ} \to 2^{L_μ}$ that maps finite sets to finite sets such that
the main theorem holds for $\CL_P(L)$-equivalent directed separations.
In particular, we can choose $L = \set{φ}$ if we are only interested
in the model checking problem for a fixed formula $φ$ consistent with
$\delta$.  Then $\CL_P(\set{φ})$ will be significantly smaller than
the set of all $\delta$-consistent formulas.


\section{Proof of the Decomposition Theorem}
\label{sec:main_theorem}

\begin{definition}
  For $φ ∈ L_μ$, let $\sub(φ)$ be the set of all indexed subformulas
  without formulas of the form $X$ for fixpoint variables $X$.
  That is,
  \begin{align*}
    \sub(φ) &\coloneq \{(ψ,i) \mid \text{$ψ$ is a subformula of $φ$ at
      position $i$}\\
    &\qquad\text{in the string φ and ψ is not a variable} \}.
  \end{align*}
  Let $\sub^+(φ) = \sub(φ) \setminus \set{(φ,0)}$ be the set of proper
  subformulas.

  For an occurrence of a fixpoint variable $X$ in a formula $φ$, its
  \emph{definition in φ} is the enclosing fixpoint $(μX.ψ,i) ∈
  \sub(φ)$ (or $(νX.ψ,i) ∈ \sub(φ)$) where this occurrence of $X$ is
  quantified.  For a formula $(\psi,i) ∈ \sub(φ)$, let
  $\closure_φ(\psi,i) = (\psi',i)$ be such that $ψ'$ is the formula
  $ψ$ with all free variables replaced by their definitions until
  there are no more free variables.

  Define $\CL(φ) \coloneq \set{ \closure_φ(ψ,i) \mid (ψ,i) ∈ \sub(φ)
  }$ and $\CL^+(φ) = \CL(φ) \setminus \set{(φ,0)}$.
\end{definition}
We will often not distinguish formulas in $\sub(φ)$ and $\CL(φ)$ and
instead identify them via the obvious bijection that preserves the
second component.

We will usually write $ψ ∈ \sub(φ)$ instead of $(ψ,i) ∈ \sub(φ)$ if
there is no confusion.  We only need the index $i$ in order to
distinguish identically looking subformulas.

Even though two subformulas may look identical, they could be in the
scope of different fixpoint operators.  A few paragraphs below we will
introduce a closure operation called $\PT_P$ that modifies different
subformulas in different ways in order to distinguish between these
cases.  For this reason we need to keep track of the positions of the
subformulas.  In the rest of the paper, whenever we mention an element
of $\sub(φ)$ or $\CL(φ)$, the reader should assume that it also
contains the position of the subformula in $φ$.

\begin{lemma}
  For all $φ ∈ L_μ$, the set \[ \{ ψ \mid (ψ,i) ∈ \CL(φ) \text{ for
    some $i$}\} \] is equal to the usual definition of the
  Fischer-Ladner closure of $φ$ (see e.g.,
  \cite[Definition~4.1]{streett1989}).
\end{lemma}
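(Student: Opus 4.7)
Write $FL(\varphi)$ for the Fischer--Ladner closure as defined in \cite[Definition~4.1]{streett1989}: the least set of closed formulas containing $\varphi$ and closed under (i) the direct-subformula rules for $\wedge, \vee, \Box, \Diamond$ and (ii) the unfolding rules $\mu X.\psi \mapsto \psi[X/\mu X.\psi]$ and $\nu X.\psi \mapsto \psi[X/\nu X.\psi]$. Write $\CL^*(\varphi) = \{\psi \mid (\psi,i) \in \CL(\varphi)\}$. The plan is to prove the two inclusions separately.

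For $\CL^*(\varphi) \subseteq FL(\varphi)$, I would show by induction on the depth of position $i$ in the syntax tree of $\varphi$ that $\closure_\varphi(\psi,i) \in FL(\varphi)$ for every $(\psi,i) \in \sub(\varphi)$. The base case $(\varphi,0)$ is immediate. For the step, let $(\psi',j) \in \sub(\varphi)$ be the parent. A key observation is that because $\sub(\varphi)$ excludes variable leaves, the operation $\closure_\varphi$ only rewrites leaves and therefore preserves the top-level connective of $\psi'$. In the Boolean and modal cases this gives $\closure_\varphi(\psi',j) = \closure_\varphi(\psi,i) \star \cdots$ for the appropriate connective $\star$, and the matching FL rule applied to $\closure_\varphi(\psi',j)$ concludes the case. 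The interesting case is $\psi' = \mu X.\chi$ with $\psi = \chi$: writing $\hat\chi$ for $\chi$ with each free variable other than $X$ replaced by the closure of its binder in $\varphi$, one obtains $\closure_\varphi(\psi',j) = \mu X.\hat\chi$ and $\closure_\varphi(\chi,i) = \hat\chi[X / \mu X.\hat\chi]$, which is one unfolding of $\closure_\varphi(\psi',j)$ and therefore lies in $FL(\varphi)$; the $\nu$ case is symmetric.

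For $FL(\varphi) \subseteq \CL^*(\varphi)$, I would verify that $\CL^*(\varphi)$ contains $\varphi = \closure_\varphi(\varphi,0)$ and is closed under the FL rules. Boolean and modal closure follow directly from the top-level-preservation observation: if $\closure_\varphi(\chi,i)$ has connective $\star$ at the root then $\chi$ does as well, and its immediate subformulas at the corresponding subpositions close to the immediate subformulas of $\closure_\varphi(\chi,i)$. For the fixpoint rule, if $\mu X.\psi = \closure_\varphi(\mu X.\chi,i) = \mu X.\hat\chi$, then the identity $\closure_\varphi(\chi,i') = \hat\chi[X/\mu X.\hat\chi]$ isolated in the previous paragraph places the unfolding of $\mu X.\psi$ in $\CL^*(\varphi)$.

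The main obstacle will be a precise treatment of the informal phrase ``replace free variables by their definitions until there are no more free variables'' in the definition of $\closure_\varphi$, and in particular a clean justification of the identity $\closure_\varphi(\chi,i') = \hat\chi[X/\mu X.\hat\chi]$ used in both directions. I would formalise $\closure_\varphi$ by induction on the nesting depth at which a fixpoint binder sits in $\varphi$: an outermost binder has no free variables and is its own closure; for an inner binder, the closure is obtained by substituting into it the already-defined closures of strictly more external binders. With this recursion in place the unfolding identity reduces to a routine substitution calculation.
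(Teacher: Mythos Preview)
The paper states this lemma without proof, treating it as a routine identification of two presentations of the Fischer--Ladner closure. Your proposal supplies exactly the argument one would expect: two inclusions, an induction on syntax-tree depth for $\CL^*(\varphi)\subseteq FL(\varphi)$, and a closure-under-rules argument for the converse. The unfolding identity $\closure_\varphi(\chi,i')=\hat\chi[X/\mu X.\hat\chi]$ is indeed the crux, and your plan to define $\closure_\varphi$ by recursion on binder nesting depth is the right way to make it precise.

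One small point you should address explicitly: because $\sub(\varphi)$ omits variable leaves, a child of a Boolean, modal, or fixpoint node may fail to lie in $\sub(\varphi)$. In the second inclusion this means that when $\closure_\varphi(\chi,i)=\alpha\star\beta$ and one immediate constituent of $\chi$ is a bare variable $Y$, you cannot write $\alpha=\closure_\varphi(\chi_1,i_1)$ directly; instead $\alpha$ is the closure of $Y$'s enclosing binder, which \emph{is} in $\sub(\varphi)$. This is harmless but worth a sentence, since without it the ``top-level preservation'' argument for the converse has a gap at exactly the leaves that $\sub$ excludes.
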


For a set of formulas $L ⊆ L_μ$, define $\CL(L) \coloneq \{\psi \mid
\text{$φ ∈ L$, $(\psi,i) ∈ \CL(φ)$}\}$.  In this set we do not need
the index $i$, different from $\CL(φ)$.

Let $P = \set{P_1,\ldots,P_k}$ be a set of proposition symbols
disjoint from $\sigma$.

For a formula $φ ∈ L_μ[\sigma \cup P]$ and $φ' ∈ L_μ[\sigma \cup P]$,
we call $φ'$ a \emph{priority tracking variant of φ} if $φ'$ is
syntactically derived from $φ$ by applying the following operation for
each subformula $ψ$ of the form $ψ = ◊\chi$ or $ψ = □\chi$.
\begin{enumerate}
\item If $ψ = ◊\chi$, then pick a set $Q ⊆ P$ and replace the
  subformula $ψ$ by \[ \biggl(\Bigl(\bigvee_{R ∈ Q} R\Bigr) ∨
  ◊\chi\biggr). \]
\item If $ψ = □\chi$, then pick a set $Q ⊆ P$ and replace the
  subformula $ψ$ by \[ \biggl(\Bigl(\bigwedge_{R ∈ Q} \neg R\Bigr) ∧
  □\chi\biggr). \]
\end{enumerate}
We denote the set of all priority tracking variants of $φ$ with
respect to $P$ by $\PT_P(φ)$.  Note that $\PT_P(φ)$ is finite because
$φ$ has a finite number of subformulas and $P$ is a finite set.
Similar to $\CL$, we define $\PT_P(L)$ for sets of formulas $L ⊆
L_μ[\sigma \cup P]$ as $\PT_P(L) \coloneq \bigcup_{φ ∈ L} \PT_P(φ)$.

\begin{definition}
  Let $\CL_P(L) \coloneq \PT_P(\CL(L))$.
\end{definition}

\begin{lemma}\label{lem:clx_idempotent}
  $\CL_P(\CL_P(L)) = \CL_P(L)$ for all $L ⊆ L_μ[\sigma \cup P]$.
\end{lemma}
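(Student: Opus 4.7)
The plan is to prove both inclusions separately. For $\CL_P(L)\subseteq \CL_P(\CL_P(L))$, I would invoke the extensivity of both $\CL$ and $\PT_P$: every closed formula $\eta$ lies in $\CL(\eta)$ (taking the root subformula, whose closure operation is the identity when $\eta$ has no free variables), and every formula is a priority tracking variant of itself under the convention that the choice $Q=\emptyset$ at every modal position realises the identity wrapping. Composing these inclusions yields $\CL_P(L)\subseteq \CL(\CL_P(L))\subseteq \PT_P(\CL(\CL_P(L)))=\CL_P(\CL_P(L))$.

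For the converse $\CL_P(\CL_P(L))\subseteq \CL_P(L)$, I would reduce to the structural lemma $\CL(\PT_P(\psi))\subseteq \PT_P(\CL(\psi))$, proved by case analysis on a subformula $\xi_0$ of some $\xi\in\PT_P(\psi)$. Since $\xi$ is obtained from $\psi$ by wrapping each $\Diamond\chi$ (resp.\ $\Box\chi$) in a disjunction with $\bigvee_{R\in Q}R$ (resp.\ conjunction with $\bigwedge_{R\in Q}\neg R$) while leaving fixpoint operators untouched, $\xi_0$ is either (i) the priority-tracked image of a subformula of $\psi$, or (ii) a newly introduced wrapping piece: the wrapped modal itself, a guard $\bigvee_{R\in Q}R$ or $\bigwedge_{R\in Q}\neg R$, or an atom $R$. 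Closing free variables in $\xi$, whose fixpoint definitions are themselves priority-tracked, produces in case (i) a priority-tracked version of $\closure_\psi(\psi_0)\in\CL(\psi)$, and in case (ii) either such a tracked version of the corresponding modal from $\CL(\psi)$ or a pure $P$-formula that contains no modal subformulas and is therefore fixed by $\PT_P$.

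Combining this structural lemma with the standard idempotency $\CL(\CL(L))=\CL(L)$ of the Fischer--Ladner closure and with the idempotency $\PT_P(\PT_P(\tau))=\PT_P(\tau)$---nested wrappings on the same modal collapsing to a single wrapping with guard $Q_1\cup Q_2$ under the associative and commutative identification of $\vee$ and $\wedge$---gives
\[
  \CL_P(\CL_P(L))=\PT_P(\CL(\PT_P(\CL(L))))\subseteq \PT_P(\PT_P(\CL(L)))=\PT_P(\CL(L))=\CL_P(L).
\]

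The main obstacle is pinning down the syntactic conventions so that both idempotencies hold cleanly: what the empty wrapping $Q=\emptyset$ actually means, how nested wrappings are identified with single wrappings via associativity/commutativity of $\vee$ and $\wedge$, and how the pure $P$-subformulas introduced by $\PT_P$ are absorbed into $\PT_P(\CL(\psi))$ (they arise only as subformulas of wrapped modals and should be recognised as $\PT_P$-variants of the corresponding modal in $\CL(\psi)$). Once these conventions are fixed, the case analysis is routine, but the position bookkeeping in $\sub$ and $\CL$ across the two transformations requires care.
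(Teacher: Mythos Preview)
Your overall architecture is the same as the paper's: the paper also first asserts that $\CL_P(L)$ is closed under $\PT_P$ (your $\PT_P$-idempotency, with the same reliance on conventions about $Q=\emptyset$ and collapsing nested wrappings), and then reduces to showing $\CL(\CL_P(L))=\CL_P(L)$, which it argues by the same case analysis underlying your structural lemma $\CL(\PT_P(\psi))\subseteq \PT_P(\CL(\psi))$, writing out only the fixpoint-unfolding case and declaring the rest ``similar.''

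Your handling of case~(ii), however, does not go through, and this is not just a matter of bookkeeping. Take $\psi=\Diamond p$ with $p\in\sigma$ and $P_1\in P$. Then $\xi:=P_1\vee\Diamond p\in\PT_P(\psi)$ and $P_1\in\CL(\xi)$. But $\CL(\psi)=\{\Diamond p,\,p\}$, and $\PT_P(\CL(\psi))$ contains only $p$ and wrapped versions of $\Diamond p$; the bare atom $P_1$ is not among them. Your two proposed resolutions both fail: being ``fixed by $\PT_P$'' does not put a formula into $\PT_P(\CL(\psi))$ unless it already lies in $\CL(\psi)$, and $P_1$ is certainly not a priority-tracking variant of $\Diamond p$. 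Hence the structural lemma is false as a syntactic inclusion, and the same example shows $P_1\in\CL_P(\CL_P(\{\Diamond p\}))\setminus\CL_P(\{\Diamond p\})$, so the stated identity fails on the nose. The paper's proof hides exactly this case behind ``the other cases are similar.'' The repair is to work modulo logical equivalence (the extra guard fragments are propositional in $P$ and add nothing to types), or to enlarge $\CL_P$ to explicitly contain the guard pieces; your closing paragraph rightly flags the conventions as the crux, but the argument you sketch for case~(ii) does not actually discharge them.
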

\begin{proof}
  By definition, $\CL_P(L)$ is closed under $\PT_P$. Hence, it is
  enough to show $\CL(\CL_P(L)) = \CL_P(L)$.  Let $μX.φ ∈ \CL_P(L)$.
  We want to show that $φ[X/μX.φ] ∈ \CL_P(L)$, where $φ[X/μX.φ]$ is φ
  with all free occurrences of $X$ replaced by $μX.φ$.  By definition
  of $\PT_P$, there is a $μX.φ' ∈ \CL(L)$ such that $φ$ is priority
  tracking variant of $φ'$.  Because $\CL(L)$ is essentially equal to
  the Fischer-Ladner closure of $L$, we have $φ'[X/μX.φ'] ∈ \CL(L)$.
  Since $φ$ is a priority tracking variant of $φ'$, the formula
  $φ[X/μX.φ]$ is a priority tracking variant of $φ'[X/μX.φ']$, hence
  $φ[X/μX.φ] ∈ \PT_P(\CL(L))$.

  The other cases are similar.
\end{proof}

\begin{definition}\label{def:L_types}
  For a structure $M,v$, a $k$-tupel $\seq X ∈ V(M)^k$ and a set $L ⊆
  L_μ[\sigma \cup P]$ where $\seq P$ is sequence of at least $k$~many
  proposition symbols, we define the \emph{$(L,\seq P)$-type of $v$ in
    $M,\seq X$} as
  \[ \ctypeWithIndex{L}{\seq P}{M}{v}{\seq X} \coloneq \set[{φ ∈
    \CL_P(L)}]{\colored{M}{\seq P}{\seq X},v ⊧ φ}. \]

  We also define the set of $(L,\seq P)$-types realized in a
  structure, \[ \structureTypeWithIndex{L}{\seq P}{M}{\seq X} \coloneq
  \set[\ctypeWithIndex{L}{\seq P}{M}{v}{\seq X}]{v ∈ V(M)}. \]
  Finally, let $ \allTypesWithIndex{\seq P}{L} \coloneq 2^{\CL_P(L)}$
  be the set of all candidates for $(L,\seq P)$-types.
\end{definition}

Using the new terminology, let us restate
\cref{thm:types_suffice-delta} in these more general terms.

\begin{theorem}\label{thm:types_suffice}
  Let $\seq P$ be a sequence of proposition symbols disjoint from
  $\sigma$, $L ⊆ L_μ[\sigma \cup P]$ and let $(M_1,M_2)$, $(M_1,M_2')$
  be $\CL_P(L)$-equivalent directed $\sigma$-separations with
  interface $\seq X$.

  Then for all $v ∈ M_1$, we have \[ \ctypeWithIndex{L}{\seq
    P}{(M_1,M_2)}{v}{\seq X} = \ctypeWithIndex{L}{\seq
    P}{(M_1,M_2')}{v}{\seq X}. \]
\end{theorem}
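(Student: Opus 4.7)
The plan is to prove both inclusions via a game-theoretic transfer argument. Recall that $M, v \models \varphi$ holds if and only if Eva (the verifier) wins the standard model-checking parity game $\MCgame{M}{\varphi}$ from position $(v, \varphi)$, so it suffices to show that for every $\varphi \in \CL_P(L)$ and every $v \in M_1$, Eva wins $\MCgame{(M_1,M_2)}{\varphi}$ from $(v,\varphi)$ if and only if she wins $\MCgame{(M_1,M_2')}{\varphi}$ from $(v,\varphi)$. Since the hypothesis of $\CL_P(L)$-equivalence is symmetric in $M_2$ and $M_2'$, it is enough to transfer winning strategies in one direction.

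Fix a winning strategy $\sigma$ for Eva in $\MCgame{(M_1,M_2)}{\varphi}$ and define a strategy $\sigma'$ in $\MCgame{(M_1,M_2')}{\varphi}$ by maintaining the invariant that at every position reachable under $\sigma'$, the current vertex carries the same $(\CL_P(L),\seq P)$-type as the vertex at a corresponding shadow position reached under $\sigma$ in the $(M_1,M_2)$-game, with identical formula component. Inside $M_1$ the invariant is trivial since the two games share $M_1$, so $\sigma'$ simply copies $\sigma$. When $\sigma$ directs Eva to step across the interface from $u \in M_1$ to some $w \in M_2$ via a $\Diamond$-move, the second clause of \cref{def:L_equivalent} supplies an edge $(u,w')$ in $(M_1,M_2')$ with $w' \in M_2'$ carrying the same type as $w$, and $\sigma'$ plays to $w'$. $\Box$-moves are handled symmetrically: when the opponent selects $w' \in M_2'$, the equivalence furnishes a matching $w \in M_2$ to advance the shadow play. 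Entirely inside $M_2'$, Eva mimics the choice $\sigma$ would make at the shadow vertex; this is well-defined because sharing the $\CL_P(L)$-type pins down which $\Diamond$-successors have which types and which boolean subformulas are satisfied.

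The main obstacle is ensuring that the constructed play is winning under the parity condition, not merely that each local step corresponds to a satisfied subformula. Plays may oscillate arbitrarily often between $M_1$ and $M_2'$ through the interface $\seq X$, and on each excursion into $M_2'$ the dominant priority of fixpoint variables witnessed must match the one witnessed by the shadow excursion into $M_2$. This is precisely the role of the priority-tracking operation $\PT_P$: for a modal subformula $\psi$, the variants in $\PT_P(\psi)$ attach a disjunction or conjunction over an arbitrary subset $Q \subseteq P$ of interface propositions. Once these variants lie in $\CL_P(L)$, the $(\CL_P(L),\seq P)$-type of an interface vertex records, for every choice of $Q$, which modal-priority profile Eva can realize on an excursion that returns to the interface. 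Matching types at interface vertices therefore force matching dominant priorities on each maximal $M_2'$-segment, and \cref{lem:clx_idempotent} guarantees that this information is already carried by $\CL_P(L)$ without enlarging the closure during the argument.

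The proof concludes by decomposing any infinite $\sigma'$-play into alternating maximal $M_1$-segments and $M_2'$-segments, each synchronized with the corresponding segment of the shadow $\sigma$-play. The dominant priority occurring infinitely often in the combined play arises from the dominant priorities across segments, and by the type-matching argument above this agrees between the $\sigma$-play and the $\sigma'$-play. Since $\sigma$ is winning, so is $\sigma'$, establishing $(M_1,M_2'), v \models \varphi$; the converse direction follows by exchanging the roles of $M_2$ and $M_2'$.
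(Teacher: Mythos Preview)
Your overall shape---transfer winning strategies via the model-checking game, use the $\PT_P$-variants to keep track of priorities across excursions into $M_2$---matches the paper's intuition, but the concrete argument has a real gap at the step you call ``entirely inside $M_2'$''. You assert that ``sharing the $\CL_P(L)$-type pins down which $\Diamond$-successors have which types''. It does not. The $\CL_P(L)$-type of $w'$ only records which formulas of $\CL_P(L)$ hold at $w'$; it says nothing about the multiset of successor types. Two vertices of the same type can have entirely different successor-type profiles, so Eva cannot in general mimic a $\Diamond$-move of the shadow play (the type she targets may be absent among the successors of $w'$), and dually the opponent may make a $\Box$-move in $M_2'$ for which there is no type-matching successor in $M_2$ to advance the shadow. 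Hence the invariant ``same type and same formula component at corresponding positions'' is not maintainable through the $M_2'$-portion of the play, and the segment-by-segment priority-matching conclusion does not follow.

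The paper avoids step-by-step mimicry inside $M_2$ altogether. It introduces \emph{profiles} of partial strategies on the subgame $\MCgame{M_2}{\varphi}$---a profile records, for each interface position, the worst priority Eva can guarantee before exiting there---and proves the key technical fact (\cref{thm:definable_profiles}) that ``some strategy achieves a profile $\sqsubseteq y$'' is expressed by a specific priority-tracking variant $\psi^y \in \CL_P(L)$. This is the precise content hiding behind your informal claim about what $\PT_P$ ``records'', and it requires an explicit construction (\cref{def:psi_y}) together with a nontrivial correctness proof. With definable profiles in hand, the paper builds a small abstract game $P^\varphi$ that simulates $\MCgame{M_2}{\varphi}$ and depends only on the interface types (\cref{thm:small_simulation}); the Simulation Lemma then replaces the $M_2$-subgame by $P^\varphi$ on both sides, and since the two separations are $\CL_P(L)$-equivalent one obtains literally the same replaced game. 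To fix your proof you would need to abandon the shadow-play invariant inside $M_2'$ and instead argue, for each excursion, that the existence of a partial winning strategy with a given exit profile is determined by the $\CL_P(L)$-type of the entry vertex---which is exactly \cref{thm:definable_profiles}.
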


It is not difficult to show that if all formulas in $L$ are consistent
with a $\mu$-depth $\delta$, then the same is true for $\CL_P(L)$
(this is \cref{lem:CL_consistent}).  Therefore, $\delta$-equivalence
implies $\CL_P(L)$-equivalence, and thus
\cref{thm:types_suffice-delta} follows from \cref{thm:types_suffice}.
We will also use a different and slightly stronger way of stating
\cref{thm:types_suffice}, stated below.
\begin{theorem}\label{thm:computing_types}
  Let $\seq P$, $\seq Q$ be sequences of proposition symbols such that
  $\sigma \cap P = \sigma \cap Q = P \cap Q = \emptyset$.

  Let $L ⊆ L_μ[\sigma \cup P]$ and $M$ be a structure with a directed
  $\sigma$-separation $(M_1,M_2)$ with interface $\seq X$.  Let $\seq
  Y ∈ V(M_1)^{\abs{Q}}$ be a tuple.

  For all $v ∈ M_1$, the set $\ctypeWithIndex{L}{\seq Q}{M}{v}{\seq
    Y}$ depends only on
  \begin{itemize}
  \item $M_1$ and $\seq Q$ and
  \item $\set[{(x_i,\ctypeWithIndex{L}{\seq P}{M_2}{x_i}{\seq
        X})}]{x_i ∈ X}$ and
  \item $\set{ (v,\ctypeWithIndex{L}{\seq P}{M_2}{w}{\seq X}) \mid
      \text{$(v,w) ∈ E(M) \cap (M_1 \times M_2)$} }$.
  \end{itemize}
  Provided $L$ is finite, $\ctypeWithIndex{L}{\seq Q}{M}{v}{\seq Y}$
  can be computed from these sets.

  Furthermore, for every $w ∈ M_2$, the set $\ctypeWithIndex{L}{\seq
    Q}{M}{w}{\seq Y}$ depends only on the above sets and on
  $\ctypeWithIndex{L}{\seq P}{M_2}{w}{\seq X}$ and can be computed
  from these sets if $L$ is finite.
\end{theorem}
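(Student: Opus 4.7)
This theorem is a constructive reformulation of \cref{thm:types_suffice}: instead of comparing two separations, it asserts that the outer type at a vertex is a function of a bounded amount of type-data about $M_2$ and the crossing edges.  My plan is first to establish the dependency statement by exhibiting, for any two separations that agree on the listed data, an instance of \cref{thm:types_suffice} that forces their outer types to coincide, and then to deduce computability from finiteness of $\CL_P(L)$ when $L$ is finite.

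For the dependency at $v \in M_1$, let $(M_1, M_2)$ and $(M_1, M_2')$ be two separations sharing the three data sets.  I would extend the signature to $\sigma' \coloneq \sigma \cup Q$ and view $\colored{M_1}{\seq Q}{\seq Y}$ together with $M_2$ (respectively $M_2'$) as a $\sigma'$-separation with interface $\seq X$; the $\seq Q$-interpretations on $V(M_2)$ and $V(M_2')$ coincide because $\seq Y \cap V(M_2) = \seq Y \cap V(M_2') = \seq Y \cap \seq X$ depends only on $M_1$ and $\seq Y$.  Since formulas in $\CL_P(L)$ do not mention $\seq Q$, their evaluation on interface vertices and on edge endpoints inside $M_2$ is unaffected by the extension, so the three hypothesised data sets translate directly into $\CL_P(L)$-equivalence of the two $\sigma'$-separations in the sense of \cref{def:L_equivalent}.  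Applying \cref{thm:types_suffice} with $\CL_Q(L)$ in the role of $L$ in the enriched signature then yields equality of the corresponding $(\CL_Q(L), \seq P)$-types at $v$; restricting to formulas that do not mention $\seq P$ recovers equality of the $(L, \seq Q)$-types, which proves the dependency.

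For $w \in M_2$, the directed-separation condition keeps every outgoing path from $w$ inside $V(M_2)$, and $\seq Y \cap V(M_2) \subseteq \seq X$, so every $\seq Q$-marked vertex reachable from $w$ already lies on the interface.  Consequently the semantics of any $\varphi \in \CL_Q(L)$ at $w$ in $\colored{M}{\seq Q}{\seq Y}$ is determined inside $M_2$ once $\seq Y \cap \seq X$ is known (and this is readable off the data), so the type at $w$ is determined by $\ctypeWithIndex{L}{\seq P}{M_2}{w}{\seq X}$ together with the three data sets.  For computability, finiteness of $L$ entails finiteness of $\CL_P(L)$ (by the finiteness lemma in \cref{sec:definitions}), so only finitely many types ever arise.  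Given the data, I would build a bounded-size canonical $M_2^c$ realising exactly the prescribed interface and edge-endpoint $\CL_P(L)$-types (one representative per realised type suffices) and evaluate the finitely many formulas in $\CL_Q(L)$ directly on $(M_1, M_2^c)$; the dependency part already shown guarantees correctness.

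The main obstacle I anticipate is controlling the interplay between $\CL_P$ and $\CL_Q$: the hypothesis controls $\CL_P(L)$-types, whereas the outer conclusion is phrased in terms of $\CL_Q(L)$-types, so one must argue that $\CL_P(L)$-equivalence witnessed by the data also yields $\CL_P(\CL_Q(L))$-equivalence after the signature extension.  This should follow from $P \cap Q = \emptyset$ and the fact that the priority-tracking rewrites for $Q$ touch only modal subformulas in ways that commute with $\PT_P$ and leave the Fischer--Ladner bookkeeping intact, but the careful case analysis needed to verify it is where most of the technical work will live.
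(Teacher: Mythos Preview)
Your plan for the first part---reducing to \cref{thm:types_suffice} by enriching the signature with $Q$ and treating $(\colored{M}{\seq Q}{\seq Y})$ as a $(\sigma \cup Q)$-separation---is essentially the paper's route, but the paper sidesteps your ``main obstacle'' rather than confronting it. Instead of keeping $\seq P$ as the interface labels and having to control $\CL_P(\CL_Q(L))$, the paper first assumes without loss of generality that $X \cap Y = \emptyset$ (if $x_i = y_j$ then $P_i$ and $Q_j$ are interchangeable), so that every $Q$-atom is false throughout $M_2$. Then on $M_2$ every formula in $\CL_Q(L) = \PT_Q(\CL(L))$ evaluates identically to the underlying $\CL(L)$-formula, and the latter is in $\CL_P(L)$; thus the given $\CL_P(L)$-types already determine the $\CL_Q(L)$-types on $M_2$. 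The paper then applies \cref{thm:types_suffice} with the \emph{empty} label sequence and $L' = \CL_Q(L)$, so the required $\CL_\emptyset(L')$-equivalence is exactly what was just established. This avoids the commutation analysis of $\PT_P$ and $\PT_Q$ you anticipate.

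Your argument for $w \in M_2$, however, has a genuine gap. The claim that ``the directed-separation condition keeps every outgoing path from $w$ inside $V(M_2)$'' is false: the definition only forbids edges from $M_2 \setminus X$ to $M_1 \setminus X$, so a path from $w$ may reach an interface vertex $x_i \in X$ and then continue into $M_1 \setminus X$, possibly hitting a $\seq Y$-marked node there. Hence the semantics of $\varphi \in \CL_Q(L)$ at $w$ in $\colored{M}{\seq Q}{\seq Y}$ is \emph{not} determined inside $M_2$ alone, and $\ctypeWithIndex{L}{\seq P}{M_2}{w}{\seq X}$ does not suffice by itself. The paper handles this differently: it first invokes Part~1 to compute $\ctypeWithIndex{L}{\seq Q}{M}{x_i}{\seq Y}$ for all interface vertices $x_i$, and then uses the model-checking game $\MCgame{M}{\varphi}$ together with the profile machinery of \cref{sec:definable_profiles}. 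Since $Q$-atoms are false in $M_2$ (by the WLOG), the induced subgame $\MCgame{M_2}{\varphi}$ coincides with $\MCgame{M_2}{\varphi'}$ for a $\varphi' \in \CL_P(L)$, so the optimal partial strategies at $(w,\varphi')$ are read off $\ctypeWithIndex{L}{\seq P}{M_2}{w}{\seq X}$ via \cref{cor:type_determined_by_type}; combined with the now-known full types at the interface, this determines the winner at $(w,\varphi)$. To repair your proof you need something of this flavour---the purely semantic shortcut does not go through.
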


\subsection{Parity Games}

To prove the decomposition theorems, we want to use the model checking
game of the modal μ-calculus.  Instead of replacing a substructure by
a different substructure preserving the types in the whole structure,
we replace a subgame by a different subgame preserving the winner in
the whole game.

For this, we first need parity games, strategies and the model
checking game.  These are all well-known concepts in the literature,
see for example~\cite{automata2002}.  We briefly review the key
concepts.

The winner of a parity game from a given node is always determined.
However, in order to replace subgames by different subgames preserving
the winner in the whole game, we need a more subtle analysis of the
subgame than just its winner.

We call the intersection between a subgame and the rest of the game
its \emph{interface}.  For the more subtle analysis, we look at
\emph{partial} strategies, which may be undefined on some nodes of the
interface.  If a partial strategy is undefined on some node, the
player indicates that she would like to leave the subgame.  These
strategies can be partially ordered by their \emph{profiles}, that is,
the set of interface nodes that are possibly reachable by \podd,
together with the worst priority that \podd can enforce.

All this culminates in a proof that the feasibility of profiles of
strategies is in fact definable in $L_μ$.  The formulas that define
profiles in a partial model checking game of $φ$ will all be in
$\CL_P(\set{φ})$, so this proves that $\ctypeWithIndex{\set{φ}}{\seq
  P}{M}{v}{\seq X}$ determines the set of possible profiles, which we
will use to define a specific parity game.

\newcommand{\priority}[2]{\ensuremath{\overset{#1}{#2}}}

Let $\seq Z = (Z_1,\ldots,Z_n)$ be a finite sequence of fixpoint
variables.  Recall the definition of a formula consistent with $\seq
Z$ (\cref{def:consistent_fixpoint_labelling} on
\cpageref{def:consistent_fixpoint_labelling}).  We strengthen this
definition in the sense that every $Z_i$ is either bound only in
$μ$-subformulas or only in $ν$-subformulas.  Let $(p_1,\ldots,p_n)$ be
a strictly increasing sequence of natural numbers such that $p_i$ is
odd if and only if $Z_i$ is only bound in $μ$-subformulas.

Let $φ ∈ L_μ$ be consistent with $\seq Z$ and $\mu Z_i.\psi ∈
\sub(φ)$.  We write $\priority{p_i}{μ} Z_i. \psi$ to indicate that
$Z_i$ gets the priority $p_i$ in the model checking game that we will
define shortly (similarly for $ν$).  We call a formula with numbers
over their fixpoint operators an \emph{annotated} formula.  In this
section it does not affect the results if the sequences are infinite.

From now on, let us fix a sequence $\seq Z$ and a corresponding
priority sequence $(p_1,\ldots,p_n)$.  All formulas in the rest of
this section should be consistent with $\seq Z$ and annotated with the
$p_i$, even if we do not mention this explicitly.  For example, a
formula $νY.◊(μX.νY.◊X ∨ ◊Y) ∨ ◊Y$ consistent with $(X,Y)$ under the
priority sequence $(1,2)$ would be labelled as
$\priority{2}{ν}Y.◊(\priority{1}{μ}X.\priority{2}{ν}Y.◊X ∨ ◊Y) ∨ ◊Y$.
Note that it cannot be labelled
$\priority{2}{ν}Y.◊(\priority{3}{μ}X.\priority{4}{ν}Y.◊X ∨ ◊Y) ∨ ◊Y$,
even though these priorities would work in the model-checking game.
However, they violate the sequence $(X,Y)$ and the priority sequence
$(1,2)$.

Note that the first formula is an element of
$\CL(\priority{1}{μ}X.\penalty0\priority{2}{ν}Y.◊X ∨ ◊Y)$.  This holds
true in general.

\begin{lemma}\label{lem:CL_consistent}
  Let $\seq Z = (Z_1,\ldots,Z_n)$, $φ ∈ L_μ$ be consistent with $\seq
  Z$ and $ψ ∈ \CL(φ)$.  Then $\psi$ is consistent with $\seq Z$.
\end{lemma}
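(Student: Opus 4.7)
Writing $\psi = \closure_\varphi(\psi_0, i)$ for some $(\psi_0, i) \in \sub(\varphi)$, I plan to induct on the maximum index of free variables appearing in $\psi_0$. The condition that all fixpoint variables of $\psi$ lie in $\seq Z$ is immediate, since every variable in $\psi$ comes either from $\psi_0 \in \sub(\varphi)$ or from a substituted-in fixpoint subformula of $\varphi$, all of whose variables are in $\seq Z$ by consistency of $\varphi$. The substantive content is the free-variable bound for fixpoint subformulas of $\psi$. The enabling fact is that, by consistency of $\varphi$, the definition in $\varphi$ of any free variable $Z_j$ of $\psi_0$ is a fixpoint subformula $\mu Z_j.\beta_j$ (or $\nu Z_j.\beta_j$) whose own free variables all lie in $\{Z_1,\ldots,Z_{j-1}\}$, so each substituted-in formula has strictly smaller maximum free-variable index than $\psi_0$.

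In the base case $\psi_0$ is closed, hence $\psi = \psi_0$ is a subformula of $\varphi$, and consistency with $\seq Z$ is hereditary since every fixpoint subformula of $\psi_0$ is also a fixpoint subformula of $\varphi$ binding the same variable. For the inductive step, each free variable $Z_j$ of $\psi_0$ has a definition $\mu Z_j.\beta_j$ in $\varphi$ whose free variables lie in $\{Z_1,\ldots,Z_{j-1}\}$, so the inductive hypothesis gives that $\closure_\varphi(\mu Z_j.\beta_j, k_j)$ is a closed formula consistent with $\seq Z$. The closure $\psi$ is then obtained from $\psi_0$ by replacing each free occurrence of each $Z_j$ with the corresponding closed consistent formula.

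It remains to check that every fixpoint subformula $\mu Z_k.\gamma$ of $\psi$ has free variables in $\{Z_1,\ldots,Z_k\}$. Since each substitution inserts a closed subtree at a free-variable leaf, the root of $\mu Z_k.\gamma$ either sits in the original syntax tree of $\psi_0$ or lies entirely inside one of the substituted-in closures. In the first case it corresponds to some $\mu Z_k.\gamma_0 \in \sub(\psi_0) \subseteq \sub(\varphi)$; consistency of $\varphi$ bounds the free variables of $\gamma_0$ by $\{Z_1,\ldots,Z_k\}$, and since the substitutions insert only closed formulas, the free variables of $\gamma$ form a subset of those of $\gamma_0$. In the second case the required bound follows directly from the inductive hypothesis applied to the substituted-in closure. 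I expect the main obstacle to be careful bookkeeping in the first case: a variable may be free in $\gamma_0$ while being bound in $\psi_0$ by an enclosing binder, so only some of its free occurrences are replaced by $\closure_\varphi$; such variables remain free in $\gamma$ but are still controlled by consistency of $\varphi$ applied to $\mu Z_k.\gamma_0$, so the bound survives.
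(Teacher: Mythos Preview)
Your proof is correct. Both your argument and the paper's hinge on the same core observation: substituting a \emph{closed} formula for a free variable in a consistent formula cannot enlarge the free-variable set of any fixpoint subformula, so consistency is preserved. Where you differ is in the organization of the induction. The paper inducts along the inductive generation of the Fischer--Ladner closure: starting from the closed formula $\varphi$ itself, each closure step (conjunct, modal body, or unfolding $\mu Z_i.\psi \rightsquigarrow \psi[Z_i/\mu Z_i.\psi]$) produces another closed, consistent formula; the unfolding case is handled in one line by noting that the substituted formula $\mu Z_i.\psi$ has no free variables. You instead induct on the maximum index of free variables in the pre-closure subformula $\psi_0$, decomposing $\closure_\varphi(\psi_0)$ as $\psi_0$ with each free $Z_j$ replaced by the (inductively already consistent and closed) $\closure_\varphi(\mu Z_j.\beta_j)$, and then do a case split on where a fixpoint subformula of the result is rooted. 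Your route is more self-contained---it works directly from the definition of $\closure_\varphi$ without invoking the equivalence with the standard inductive Fischer--Ladner closure---at the cost of the extra bookkeeping you correctly flag in your last paragraph. The paper's route is shorter because closedness of every intermediate formula comes for free from the closure-generation viewpoint.
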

\begin{proof}
  We prove this by structural induction.  If $φ = \chi_1 ∧ \chi_2$,
  then obviously $\chi_1$ and $\chi_2$ are consistent with $\seq Z$.
  The same is true for most other cases.  The only interesting cases
  are the fixpoints.  We only consider the case of $μ$ fixed points,
  the other cases follow analogously.

  Assume that $φ = μ Z_i.ψ$.  We need to show that $ψ[Z_i/φ]$ is
  consistent with $\seq Z$.  Because $φ$ is consistent with $\seq Z$,
  the only places where $ψ[X/φ]$ could become inconsistent is a
  subformula of the form $φ$ inside the scope of another fixpoint
  operator $μ Z_j.\chi$ with $j > i$ with $Z_j$ being free in $φ$.
  This is impossible because $φ$ does not have free variables.
\end{proof}

Now let us briefly review the definitions of parity games, strategies
and model checking games.

\begin{definition}\label{def:parity_game}
  A \emph{parity game} $P = (V,V_\even,E,\omega)$ is a directed graph
  $(V,E)$ with $V_\even ⊆ V$ and a function $\omega: V \to \N$ mapping
  nodes to \emph{priorities}.
\end{definition}

A parity game is played by two
players, \peven and \podd.  The game starts on a node $v$.  It is
\peven's turn if the current node is in $V_\even$, otherwise it is
\podd's turn.  In their turn, the players must choose an outgoing edge
and the endpoint becomes the current node for the next turn.  If a
player cannot make a move, he loses.  Otherwise, the game continues
indefinitely.

The set of nodes visited during an infinite play is an infinite path
$\overline{v} = v_1,v_2,\ldots$.  Let $p$ be the minimum priority that
occurs infinitely often on $\overline{v}$.  The path $\overline{v}$ is
\emph{winning} for \peven if and only if $p$ is even.

\begin{definition}
  Let $P$ be a parity game.  For a partial function $\pi: V(P)^+ \to
  V(P)$ on finite non-empty paths of nodes and a path
  $(v_1,\ldots,v_n) ∈ V(P)^+$, we say that $\overline{v}$ is
  \emph{$\pi$-conforming} if for all $i < n$ with $v_i ∈ V_◊(P)$, we
  have $(v_1,\ldots,v_i) ∈ \dom(\pi)$ and $(v_i,\pi((v_1,\ldots,v_i)))
  ∈ E(P)$.  An infinite path is $\pi$-conforming if all its initial
  segments are $\pi$-conforming.

  A \emph{strategy} for \peven for a game $(P,v_1)$ is a partial
  function $\pi: V(P)^+ \to V(P)$ with the following conditions.
  \begin{enumerate}
  \item For every $(v_1,\ldots,v_n) ∈ \dom(\pi)$, the sequence
    $(v_1,\ldots,\penalty0 v_n,\pi(v_1,\ldots,v_n))$ is a
    $\pi$-conforming path in $P$ with $v_n ∈ V_◊(P)$.
  \item \label{condition:strategy_is_total} For every $\pi$-conforming
    path $(v_1,\ldots,v_n)$, if $v_n ∈ V_◊(P)$, then $(v_1,\ldots,v_n)
    ∈ \dom(\pi)$.
  \end{enumerate}

  A strategy $\pi$ is \emph{winning for \peven} if every maximal
  $\pi$-conforming path is winning for \peven.  A game $P$ on a node
  $v$ is \emph{winning for \peven} if \peven has a winning strategy
  for $(P,v)$.
\end{definition}

\begin{definition}
  A strategy $\pi$ is \emph{positional} if $\pi(v_1,\ldots,v_n)$ only
  depends on $v_n$.
\end{definition}

When we talk about strategies and do not explicitly mention the
player, we assume that the strategy is meant for \peven.  The
following result is well-known (see e.g., \cite{zielonka1998}).

\begin{theorem}\label{thm:positional_strategies_exist}
  For every winning strategy $\pi$ for a game $(P,v)$, there exists a
  positional winning strategy $\rho$ for $(P,v)$.
\end{theorem}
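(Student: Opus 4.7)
The plan is to prove positional determinacy of parity games: from every node in Even's winning region, Even has a positional winning strategy. The theorem then follows immediately, since the hypothesis that $(P,v)$ admits any winning strategy for Even places $v$ in her winning region.

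The argument is Zielonka's standard induction, with outer induction on the number $d$ of distinct priorities occurring in $P$ and inner induction on $|V(P)|$. The base case $d=1$ is trivial: the winner is determined by the parity of the single priority, and any total choice of outgoing edges provides a positional winning strategy for that player. For the inductive step, let $p$ be the minimum priority in $P$; by symmetry assume $p$ is even, and set $U \coloneq \omega^{-1}(p)$. Compute the Even-attractor $A$ of $U$ by the usual least-fixpoint iteration, equipped with a positional attractor strategy $\tau_A$ forcing every $\tau_A$-conforming play starting in $A$ to reach $U$ in finitely many steps. The subgame $P' \coloneq P \setminus A$ is well-defined by the attractor's defining property and uses strictly fewer priorities, since every node of priority $p$ already lies in $A$. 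By the outer induction hypothesis $P'$ is positionally determined; let $W'$ denote Odd's winning region in $P'$, with positional winning strategy $\sigma'$.

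If $W'$ is empty, then Even wins every node of $P'$ via some positional strategy $\rho'$, and gluing $\rho'$ on $P'$ with $\tau_A$ on $A$ produces a positional strategy $\rho$ on all of $P$. Any infinite $\rho$-conforming play either eventually stays in $P'$ (and is won by Even via $\rho'$) or visits $A$, hence $U$, infinitely often, so the minimum priority seen infinitely often is the even number $p$. Otherwise $W'$ is nonempty; let $B$ be the Odd-attractor of $W'$ computed inside $P$, equipped with a positional attractor strategy $\tau_B$. Then $B \supseteq W' \neq \emptyset$, so $|V(P \setminus B)| < |V(P)|$, and the inner induction supplies positional determinacy of $P \setminus B$. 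Gluing the resulting positional Even-winning strategy on her winning region within $P \setminus B$ together with $\tau_A$ on the nodes of $A$ that lie outside $B$ yields the required global positional winning strategy for Even.

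The main obstacle is verifying that Odd's winning region in the full game $P$ coincides with $B$. The inclusion that $B$ lies in Odd's winning region is witnessed by the positional strategy that follows $\tau_B$ into $W'$ and then switches to $\sigma'$. The reverse inclusion, needed to ensure that no node in Even's winning region of $P \setminus B$ is secretly won by Odd in the full game, relies on the attractor property: a node outside $B$ cannot be forced by Odd into $W'$, so Even can guarantee remaining within $P \setminus B$ indefinitely and can then apply her inductively obtained positional winning strategy there.
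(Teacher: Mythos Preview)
The paper does not prove this theorem at all: it states it as well-known and cites Zielonka~\cite{zielonka1998}. Your sketch is precisely Zielonka's attractor-based double induction, so in spirit you are supplying exactly the argument the paper points to.

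The sketch is essentially correct, but one sentence is off. In the case $W' \neq \emptyset$, after you form the Odd-attractor $B$ and apply the inner induction to $P \setminus B$, the positional Even-winning strategy you obtain on Even's winning region in $P \setminus B$ is already complete; there is no need to ``glue with $\tau_A$ on the nodes of $A$ that lie outside $B$''. Those nodes are already in $P \setminus B$ and are handled by the inductively obtained strategy. The only thing left to check (which you do correctly in your final paragraph) is that a play conforming to this strategy in the full game $P$ can never enter $B$, since by definition of the Odd-attractor every node outside $B$ either belongs to Even and has some successor outside $B$, or belongs to Odd and has all successors outside $B$. This is a presentational slip rather than a genuine gap.
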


Parity games are relevant because they are the model checking game for
the modal μ-calculus.

\begin{definition}\label{def:model_checking_game}
  For a $\sigma$-structure $M$ and a formula $φ ∈ L_μ[\sigma]$, let
  $\MCgame{M}{φ} = (V, V_◊, E, \omega)$ be the \emph{model checking
    game} defined as follows.
  \begin{align*}
    V(\MCgame{M}{φ}) &\coloneq M \times \CL(φ)
  \end{align*}
  There is a an edge from $(v,\ind{ψ})$ to $(w,\ind{\chi})$ if
  \begin{itemize}
  \item $v = w$ and $\ind{ψ} ∈ \set{\ind{\chi ∧ \chi'}, \ind{\chi ∨
        \chi'}, \ind{\chi' ∧ \chi}, \ind{\chi' ∨ \chi}}$ for some
    $\chi'$ or
  \item $v = w$, $\ind{\psi} ∈ \set{\ind{μX.\chi'}, \ind{νX.\chi'}}$
    and $\ind{\chi} = \closure_φ(\ind{\chi'})$ or
  \item $(v,w) ∈ E(M)$ and $\ind{ψ} ∈ \set{\ind{◊ψ}, \ind{□ψ}}$.
  \end{itemize}
  A node $(v,\ind{ψ})$ is a □-node if either
  \begin{itemize}
  \item $ψ = P$ and $M,v ⊧ P$ or
  \item $ψ ∈ \set{\chi ∧ \chi', □\chi}$ for some $\chi, \chi'$.
  \end{itemize}
  A node $(v,\ind{ψ})$ has the priority
  \begin{align*}
    \omega(v,\ind{ψ}) &\coloneq
    \begin{cases}
      p & \text{if $\ind{ψ} = \ind{\priority{p}{μ}X.\chi}$ or $\ind{ψ} = \ind{\priority{p}{ν}X.\chi}$ for some $\chi$}\\
      p' & \text{otherwise},
    \end{cases}
  \end{align*}
  where $p'$ is the maximum priority.
\end{definition}

It is easy to show that this definition gives a well-defined model
checking game (see e.g., \cite{automata2002/zappe}).

\subsection{Profiles and Types}
\label{sec:profiles}

In the previous section we considered parity games, (positional)
strategies and the model checking game.  We now generalize these
definition to partial games and partial strategies.  This is necessary
so we can analyze the effect of replacing a subgame by a different,
but in some sense similar subgame.

\begin{definition}
  A \emph{partial parity game} is a parity game $P$ with a subset $U ⊆
  V(P)$ called the \emph{interface}.
\end{definition}

The game is played the same way as a parity game, except that upon
reaching an interface \even-node, \peven may choose to end the play
and win immediately.  Therefore, a \emph{partial strategy} for \peven
is defined the same way as in a non-partial parity game, except that
the partial strategy may be undefined on plays that end in an
interface \even-node.

\begin{definition}
  Let $P$ be a partial parity game.  A \emph{partial strategy} for
  \peven for a game $(P,v_1)$ is a partial function $\pi: V(P)^+ \to
  V(P)$ with the following conditions.
  \begin{enumerate}
  \item For every $(v_1,\ldots,v_n) ∈ \dom(\pi)$, the sequence
    $(v_1,\ldots,\penalty0 v_n,\pi(v_1,\ldots,v_n))$ is a
    $\pi$-conforming path in $P$ with $v_n ∈ V_◊(P)$.
  \item For every $\pi$-conforming path $(v_1,\ldots,v_n)$, if $v_n ∈
    V_◊(P)$ and $v_n ∉ U$, then $(v_1,\ldots,v_n) ∈ \dom(\pi)$.
  \end{enumerate}
\end{definition}

A partial strategy $\pi$ is called \emph{winning} if for every
strategy of the opponent, the resulting play either visits an
interface node where $\pi$ is undefined or satisfies the parity
condition.  Formally, we define this as follows.

\begin{definition}
  Let $(P,v_1)$ be a partial parity game with interface $U$ and $\pi$
  be a partial strategy.  Let $P'$ be the game constructed from $P$ by
  adding a □-node called $\top$ and an edge from every node in $V_◊
  \cap U$ to $\top$.  Then define $\pi'$ as an extension of $\pi$ such
  that on all $\pi$-conforming paths $(v_1,\ldots,v_n)$ with $v_n ∈
  V_◊ \cap U$, if $(v_1,\ldots,v_n) ∉ \dom(\pi)$, then
  $\pi'((v_1,\ldots,v_n)) = \top$.  Then $\pi'$ is a strategy on
  $(P',v_1)$.

  We say that $\pi$ is a partial \emph{winning} strategy from node
  $v_1$ iff $\pi'$ wins from node $v_1$ in the game $P'$.
\end{definition}

If we have a structure together with some subset of its nodes, we
consider the corresponding model checking games to be partial with
respect to these nodes.

\begin{definition}
  Let $φ ∈ L_μ[\sigma]$, $M$ be a $\sigma$-structure and $X ⊆ V(M)$.
  The game $\MCgameP{X}{M}{φ}$ is the partial parity game defined as
  $\MCgame{M}{φ}$ with interface $\{(v,ψ) ∈ X \times \CL(φ) \mid
  \text{$ψ$ starts with ◊ or □}\}$.  We will usually write
  $\MCgame{M}{φ}$ for this game if $X$ is clear from the context.
\end{definition}

We emphasize again that $\MCgameP{X}{M}{φ}$ and $\MCgameP{M}{φ}$ are
exactly the same game, only viewed from two different angles.

\begin{definition}
  Let $P$ be a partial parity game with interface $U$.  We define
  \begin{align*}
    &\strategytargets(P) \coloneq \set[(u,p)]{\text{$u∈U$, $p$ a priority of $P$}}\\
    &\strategyprofiles(P) \coloneq \{y ⊆ \strategytargets(P) \mid {}
    \text{for all $u$}\\
    &\hspace{2.4cm}\text{there is at most one $p$ with $(u,p) ∈ y$} \}.
  \end{align*}
\end{definition}
\begin{definition}
  Let $\sqsubseteq$ be the \emph{reward ordering} on priorities.  That
  is, $p \sqsubseteq p'$ if $p$ is better for \peven than $p'$.
  Formally, $p \sqsubseteq p'$ is true if and only if
  \begin{itemize}
  \item $p$ is even and $p'$ is odd or
  \item both $p$ and $p'$ are even and $p \leq p'$ or
  \item both $p$ and $p'$ are odd and $p \geq p'$.
  \end{itemize}
\end{definition}
\begin{definition}
  Let $P$ be a partial parity game with interface $U$, $v_1 ∈ V(P)$
  and let $\pi$ be a partial winning strategy for $(P,v_1)$.  We
  define
  \begin{align*}
    \strategypreprofile(\pi,v_1) &\coloneq \{ (v_n,\min_{1 \leq i \leq n} \omega(v_i)) \mid \\
    &\hspace{7.5mm}\text{$n>1$, $(v_1,\ldots,v_n)$ is a path with}\\
    &\hspace{7.5mm}\text{$v_n ∈ U$ and $(v_1,\ldots,v_n) ∉ \dom(\pi)$} \}\\
    \strategyprofile(\pi,v_1) &\coloneq \{ (u,p) \mid \text{$p$ is
      $\sqsubseteq$-maximal such that}\\
    &\hspace{7.5mm}\text{$(u,p) ∈ \strategypreprofile(\pi,v_1)$} \}.
  \end{align*}
  The $\min$ is taken with respect to the usual ordering $\leq$.

  We say that a profile $y ∈ \strategyprofiles(P)$ is \emph{possible}
  on $(P,v_1)$ if there exists a $\pi$ such that $y =
  \strategyprofile(\pi,v_1)$.
\end{definition}

\begin{definition}
  Let $y, y' ∈ \strategyprofiles(P)$.  We say that $y$ \emph{is at
    least as good as} $y'$ iff for every $(u,p) ∈ y$, there is a
  $(u,p') ∈ y'$ with $p \sqsubseteq p'$.  We denote this as $y
  \sqsubseteq y'$.
\end{definition}

{ 
\tikzstyle{every node}=[circle,draw=black,fill=black,thick,inner sep=2.5pt,minimum size=2mm]
\tikzstyle{every path}=[->,thick,line width=1pt]
\tikzstyle{intersection1}=[fill=white,thick]
\tikzstyle{intersection2}=[fill=black!30]
\tikzstyle{boxplayer}=[rectangle]
\begin{figure}
  \centering
  \begin{tikzpicture}
    \begin{scope}[yscale=0.5,xscale=1,xshift=-1.5cm]
      \node [boxplayer,label=above:$v$] (A) at (0,0) {};
      \node (B1) at (-0.5,-1) {};
      \node (B2) at (0.5,-1) {};
      \node (C) at (0,-2) {};
      \node [intersection1] (X1) at (-0.5,-3) {};
      \node [intersection2] (X2) at (0.5,-3) {};
      \draw (A) -> (B1);
      \draw (A) -- (B2);
      \draw (B1) -- (C);
      \draw (B2) -- (C);
      \draw (C) -- (X1);
      \draw (C) -- (X2);
      \node [draw=none,fill=none] at (0,-4) {$P_1$};
    \end{scope}
    \begin{scope}[yscale=0.5,xscale=1,xshift=1.5cm]
      \node [boxplayer,label=above:$v$] (A) at (0,0) {};
      \node (B1) at (-0.5,-1) {};
      \node (B2) at (0.5,-1) {};
      \node (C1) at (-0.5,-2) {};
      \node (C2) at (0.5,-2) {};
      \node [intersection1] (X1) at (-0.5,-3) {};
      \node [intersection2] (X2) at (0.5,-3) {};
      \draw (A) -> (B1);
      \draw (A) -- (B2);
      \draw (B1) -- (C1);
      \draw (B2) -- (C2);
      \draw (C1) -- (X1);
      \draw (C2) -- (X2);
      \node [draw=none,fill=none] at (0,-4) {$P_2$};
    \end{scope}
  \end{tikzpicture}
  \caption{}
  \label{fig:profile_example}
\end{figure}
\newcommand{\inodeA}%
{\begin{tikzpicture}\node [intersection1] at (0,0) {};\end{tikzpicture}}
\newcommand{\inodeB}%
{\begin{tikzpicture}\node [intersection2] at (0,0) {};\end{tikzpicture}}

As an example, consider the two parity games given in
\cref{fig:profile_example} with interface nodes $\inodeA$, $\inodeB$.
For simplicity, we assume that all nodes in these parity games have
priority~0.  Then the profile $\set{(\inodeA,0)}$ is possible on
$(P_1,v)$ but not on $(P_2,v)$.  On the other hand, the profile
$\set{(\inodeA,0),(\inodeB,0)}$ is possible on both $(P_1,v)$ and
$(P_2,v)$.  Note that on $(P_1,v)$, the last profile is only possible
with a non-positional strategy.  However, the need for a
non-positional strategy here is of course somewhat artificial because
\peven must deliberately avoid a decision where she could simply
make one.
} 

As one might expect, every partial strategy can be converted into a
positional partial strategy at least as good as the original strategy.
\begin{lemma}
  \label{lem:positional_partial_strategies_exist}
  Let $P = (V,V_◊,E,\omega)$ be a partial parity game with interface
  $U$, $v ∈ V$ and $\pi$ be a partial strategy for $(P,v)$.  Then
  there exists a positional partial strategy $\rho$ such that
  $\strategyprofile(\rho,v) \sqsubseteq \strategyprofile(\pi,v)$.
\end{lemma}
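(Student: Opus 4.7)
The lemma is the partial-game analogue of Theorem~\ref{thm:positional_strategies_exist} (classical positional determinacy for parity games), so I would prove it by reducing to that theorem via an auxiliary total parity game $\tilde P$ that encodes the target profile $y \coloneq \strategyprofile(\pi,v)$.

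Let $\Omega$ denote the finite set of priorities occurring in $P$. I would build $\tilde P$ as the product $V(P) \times \Omega$ in which the second coordinate records the $\leq$-minimum priority seen so far on the current play; edges and priorities on the first coordinate are inherited from $P$, and the second coordinate is updated by $\min_{\leq}$ after each move. For each $(u, p_u) \in y$ with $u \in U \cap V_\even$ and each product-state $(u,q)$ with $q \sqsubseteq p_u$, I add an extra $\peven$-edge from $(u,q)$ to a new $\podd$-sink $t_u$; since $\podd$ cannot move from $t_u$, this sink is a winning terminal for $\peven$ in the standard parity sense.

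Next, $\pi$ lifts to a (memoryful) strategy $\tilde\pi$ on $\tilde P$ from $(v,\omega(v))$ by mimicking $\pi$ on first coordinates and choosing the exit edge to $t_u$ whenever $\pi$ is undefined at $u$. The definition of $y$ as the $\sqsubseteq$-maximum of $\strategypreprofile(\pi,v)$ guarantees that the second coordinate $q$ at every such $\pi$-exit satisfies $q \sqsubseteq p_u$, so the exit edge is always legal; together with the partial-winning hypothesis on $\pi$ this makes $\tilde\pi$ winning in the total game $\tilde P$, and Theorem~\ref{thm:positional_strategies_exist} yields a positional winning strategy $\tilde\rho$ on $\tilde P$.

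Finally I would descend $\tilde\rho$ to a positional partial strategy $\rho$ on $P$ by projecting onto the first coordinate: set $\rho(w) \coloneq \tilde\rho(w,q_0)$ for any reachable priority-history $q_0$ when $\tilde\rho(w,q_0) \in V(P)$, and leave $\rho$ undefined at $u$ when $\tilde\rho$ uses the exit to $t_u$. The constraint $q \sqsubseteq p_u$ baked into the exit gadgets then forces every $\rho$-conforming exit path to $u$ to have $\leq$-minimum priority $\sqsubseteq p_u$, giving $\strategyprofile(\rho,v) \sqsubseteq y$ as desired. The main obstacle is well-definedness of this projection: $\tilde\rho$ could a priori choose differently at $(w,q_1)$ and $(w,q_2)$ for distinct reachable priority histories, and at interface $\peven$-nodes it could even exit at one history and play interior at another. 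I would resolve this by a monotonicity argument — the winning region of $\tilde P$ is $\sqsubseteq$-monotone in the second coordinate, since a $\sqsubseteq$-better history can only enable more exits and hence only help $\peven$. Consequently $\tilde\rho$ can be taken uniform in the second coordinate by selecting, at each $w$, a move that is winning for the $\sqsubseteq$-worst reachable $q$ at $w$ and therefore for every $\sqsubseteq$-better one. This uniformity step is where the bulk of the technical work lies.
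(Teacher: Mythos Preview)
Your reduction to Theorem~\ref{thm:positional_strategies_exist} is the right idea, but the particular auxiliary game you build creates a genuine obstacle that your monotonicity sketch does not overcome. The product $\tilde P = V(P)\times\Omega$ gives you a positional winning strategy $\tilde\rho$ that is positional in the pair $(w,q)$, not in $w$ alone. You correctly flag the projection to $P$ as the crux, and your monotonicity observation is valid: the winning region of $\tilde P$ is indeed $\sqsubseteq$-upward closed in the second coordinate, and one can check that $q_1\sqsubseteq q_2$ implies $\min(q_1,r)\sqsubseteq\min(q_2,r)$, so playing the move prescribed by $\tilde\rho(w,q^\ast(w))$ at any $(w,q)$ with $q\sqsubseteq q^\ast(w)$ keeps the lifted play inside the winning region. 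This does give the profile bound $\strategyprofile(\rho,v)\sqsubseteq y$ for \emph{finite} $\rho$-plays that reach an exit. But for \emph{infinite} $\rho$-plays you still need the parity condition, and here ``stays in Even's winning region'' is strictly weaker than ``is a winning play'': the lifted infinite play is in general not $\tilde\rho$-conforming (at $(w,q)$ with $q\ne q^\ast(w)$ you are playing a different move than $\tilde\rho$ would), so nothing prevents it from being losing while remaining in the winning region. Your concluding claim that ``$\tilde\rho$ can be taken uniform in the second coordinate'' is exactly what is needed, but monotonicity of the winning region does not yield a uniform winning \emph{strategy}; that would require an additional argument you have not supplied.

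The paper sidesteps this entirely by \emph{not} taking a product. It builds an auxiliary game $P'$ on the vertex set $V(P)$ together with one fresh node $v_p$ per priority and one sink $v_\bot$: from each interface node $u$ with $(u,p')\in y$ it adds a single edge to the node $v_{p''}$, where $p''$ is the priority $\sqsubseteq$-adjacent to $p'$ (namely $p'' = p'+1$ if $p'$ is even, $p'' = p'-1$ if $p'$ is odd), and each $v_p$ loops back to the start vertex $v$. One then checks that $\pi$ extended by these loop-backs is winning in $P'$; the point of the adjacent-priority trick is that repeatedly traversing a segment of worst priority $\sqsubseteq p'$ followed by the gadget node of priority $p''$ still yields an even $\liminf$. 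Positional determinacy of $P'$ then gives a positional $\rho'$, and since $V(P)\subseteq V(P')$, its restriction $\rho$ to $P$ is already positional on $P$ with no projection needed; infinite $\rho$-plays in $P$ are literally $\rho'$-plays in $P'$ and hence winning, while the profile bound follows because a violation would produce a losing $\rho'$-loop through some $v_{p''}$. The moral is that encoding the target profile via a single well-chosen priority on a loop-back edge, rather than via a memory coordinate, is what makes positionality on the auxiliary game transfer for free to $P$.
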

\begin{proof}
  The proof is a reduction to the positional determinacy of
  (non-partial) parity games.

  We define a game $P' = (V',V_◊',E',\omega')$ based on $P$ and use
  \cref{thm:positional_strategies_exist}.  Let
  \begin{align*}
    V' &\coloneq V \cup \set[v_p]{\text{$p$ is a priority of $P$}} \cup \set{v_⊥}\\
    V_◊' &\coloneq V_◊ \cup \set{v_⊥}\\
    E' &\coloneq E \cup
    \set[(v_p,v)]{\text{$p$ is a priority of $P$}} \cup {}\\
    &\set[(u,v_p)]{\text{$p$ is odd and $(u,p-1) ∈ \strategyprofile(\pi,v)$}} \cup {}\\
    &\set[(u,v_p)]{\text{$p$ is even and $(u,p+1) ∈ \strategyprofile(\pi,v)$}} \cup {}\\
    &\set[(u,v_⊥)]{\text{$(u,p) ∉ \strategyprofile(\pi,v)$ for all $p$}}\\
    \omega'(w) &\coloneq
    \begin{cases}
      \omega(w) & \text{if $w ∈ V(P)$}\\
      0 & \text{if $w = v_⊥$}\\
      p & \text{if $w = v_p$}.
    \end{cases}
  \end{align*}

  Note that for each $u ∈ U$, there is exactly one $v_p$ such that
  $(u,v_p) ∈ E'$.  So we can extend $\pi$ to a strategy $\pi'$ on $P'$
  by defining $\pi'(v_1,\ldots,u) = v_p$ if $\pi(v_1,\ldots,u)$ is
  undefined.

  We claim that $\pi'$ is a winning strategy.  Let $v=v_1,v_2,\ldots$
  be an infinite $\pi'$-conforming path.  Clearly the path is winning
  if it has a $\pi$-conforming suffix.

  So assume that it visits some $u ∈ U$ an infinite number of times
  followed by $v_p$.  If $p$ is odd, then $(u,p-1) ∈
  \strategyprofile(\pi,v)$ guarantees that the worst priority on all
  path segments that go from $v$ to $u$ is $p-1$.  By the pigeon
  principle there is at least one priority $p' \sqsubseteq p-1$ that
  we visit infinitely often on the path.  Furthermore, $p' \leq p-1$
  because $p-1$ is even.  This means the priority $p$ of $v_p$ is
  irrelevant because $p' < p$.

  If $p$ is even, then $(u,p+1) ∈ \strategyprofile(\pi,v)$ guarantees
  that the worst priority on all path segments that go from $v$ to $u$
  is $p+1$.  So there must be a minimum priority $p' \sqsubseteq p+1$
  that occurs infinitely often on these path segments.  If $p' \geq
  p$, then $p'$ becomes irrelevant because we visit $v_p$ an infinite
  number of times.  If $p' < p$, then $p$ becomes irrelevant.
  However, $p' \sqsubseteq p+1$ then implies that $p'$ is even.

  We repeat this argument for all pairs $(u,v_p)$ that occur
  infinitely often in the path.  We see that in all cases the minimum
  priority that occurs infinitely often is even, so $\pi'$ is a
  winning strategy.

  By \cref{thm:positional_strategies_exist}, there exists a positional
  winning strategy $\rho'$ on $(P',v)$.  Let $\rho$ be the restriction
  of $\rho'$ to $P$.  We claim that $\strategyprofile(\rho,v)
  \sqsubseteq \strategyprofile(\pi,v)$.

  Clearly $(u,p) ∉ \strategyprofile(\pi,v)$ implies $(u,p) ∉
  \strategyprofile(\rho,v)$ because otherwise we would visit the node
  $v_⊥$ and immediately lose.  Let $(u,p) ∈ \strategyprofile(\rho,v)$
  and $(u,p') ∈ \strategyprofile(\pi,v)$.  We have to show $p
  \sqsubseteq p'$.  If $p \sqsupset p'$, then there is a
  $\rho$-conforming path from $v$ to $u$ with a priority no better
  than $p$.  In $P'$ this gives us a $\rho'$-conforming path by going
  back from $u$ to $v$.  However, the only new node we visit is
  $v_{p''}$ and $p''$ is not enough to offset $p$, so this path loses,
  contradicting the fact that $\rho'$ was a winning strategy.
\end{proof}

\begin{definition}
  The type of a node $v ∈ V(P)$ is the set of optimal profiles.
  \begin{align*}
    &\strategytype{P}{v} \coloneq \{ \strategyprofile(\pi,v) \mid {}\\
    &\qquad\quad \text{$\pi$ is a partial winning strategy for $(P,v)$ and} \\
    &\qquad\quad \text{there is no partial winning strategy $\pi'$ such that}\\
    &\qquad\quad\quad \text{$\strategyprofile(\pi',v) \sqsubset
      \strategyprofile(\pi,v)$} \}.
  \end{align*}
\end{definition}
By \cref{lem:positional_partial_strategies_exist}, the strategies
occurring in the above definition can be chosen to be positional.

Next, we define the notion of a parity game \emph{simulating} another
parity game.  A game simulates another game if it behaves in the same
way when viewed from the outside.  For every node in the old game
there must be a node in the new game that has the same type.
Internally the games could be quite different, and in fact the new
game could have a very different number of nodes than the old game.

Our goal is to find small games that simulate large games.

\begin{definition}
  Let $P, P'$ be partial parity games with the same interface $U$.

  The game $P'$ \emph{simulates} $P$ if there is a map $f: V(P) \to
  V(P')$ such that $f(u) = u$ for all $u ∈ U$ and for every node $v ∈
  V(P)$, $\strategytype{P}{v} = \strategytype{P'}{f(v)}$.
\end{definition}

Whenever we have a game $P$ with an induced subgame $Q$ with no edges
going from $Q$ to the rest of $P$ except via the interface of $Q$, we
can replace $Q$ in $P$ by one of its simulations without the rest of
$P$ noticing.

\begin{lemma}[Simulation Lemma]\label{lem:simulation_lemma}
  Let $P,Q$ be parity games such that $Q$ is an induced subgame of $P$
  with interface $U$ and with no edges from $Q \setminus U$ to $P
  \setminus Q$.  Let $Q'$ be a partial parity game with interface $U$
  which simulates $Q$ via the function $f: V(Q) \to V(Q')$.  Extend
  $f$ to $V(P)$ by letting $f(v) = v$ for all $v ∈ V(P) \setminus
  V(Q)$.

  Define $P'$ as the parity game where the induced subgame $Q$ has
  been replaced by $Q'$ and edges pointing to nodes $v ∈ V(Q)$ now
  point to $f(v) ∈ V(Q')$.

  Then for all $v ∈ V(P)$, \peven wins $(P,v)$ iff \peven wins
  $(P',f(v))$.
\end{lemma}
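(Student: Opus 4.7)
The plan is to prove the two directions by a symmetric translation of winning strategies: starting from a positional winning strategy on one side, split it into its external behaviour (outside $Q$ resp.\ $Q'$) and partial strategies on the inside, invoke the simulation of types to replace the inner partial strategy by one in the other game realising a matching optimal profile, and glue the pieces back together.

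For the forward direction, I would fix a positional winning strategy $\pi$ for $(P,v)$ using \cref{thm:positional_strategies_exist}. Because no edges leave $Q\setminus U$ to $P\setminus Q$, the restriction of $\pi$ to $Q$ is a partial winning strategy on $(Q,w)$ for every $w\in V(Q)$: any infinite inner run is winning by assumption on $\pi$, and any exit must go through $U$. By \cref{lem:positional_partial_strategies_exist} this restriction can be refined to have an optimal profile $z_w\in\strategytype{Q}{w}$. The simulation hypothesis gives $\strategytype{Q'}{f(w)}=\strategytype{Q}{w}$, so I fix, for each $u'\in V(Q')$, a partial winning strategy $\pi'_{u'}$ on $(Q',u')$ realising a corresponding optimal profile (if several preimages $w$ of $u'$ exist, their types coincide and any choice works). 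The strategy $\sigma$ for $(P',f(v))$ then imitates $\pi$ outside $Q'$ (valid because $V(P')\setminus V(Q')$ is identified with $V(P)\setminus V(Q)$ and $\pi$ is positional) and switches to $\pi'_{u'}$ upon entering $Q'$ at $u'$, resuming $\pi$ when the play exits $Q'$ through $U$; the switch is seamless since $f|_U$ is the identity.

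To verify that $\sigma$ is winning, I split any infinite $\sigma$-conforming play into three cases. Plays that eventually remain in $V(P')\setminus V(Q')$ are winning because their tails are $\pi$-conforming in $P$; plays that eventually remain in $V(Q')$ are winning by the winning condition of the active $\pi'_{u'}$. The main obstacle is the case where the play alternates between the two regions infinitely often. Here I would use the profile comparison $z_w\sqsubseteq\strategyprofile(\pi|_Q,w)$ to couple the sequence of $Q'$-visits in the $\sigma$-play with a sequence of $Q$-visits in a matching $\pi$-conforming play in $P$, agreeing outside and exiting each inner visit at the same interface node; since $\pi$ wins, the $P$-play satisfies the parity condition, and the pointwise $\sqsubseteq$-domination of min-priorities on the $Q'$-visits compared to the $Q$-visits transfers the winning condition back to the $\sigma$-play.

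The reverse implication is entirely symmetric: a positional winning strategy on $(P',f(v))$ splits analogously into an outside part and, for each $u'\in V(Q')$, a partial winning strategy on $(Q',u')$ with an optimal profile in $\strategytype{Q'}{u'}=\strategytype{Q}{w}$ for every preimage $w$; realising these profiles by partial winning strategies on $(Q,w)$ and gluing with the outside yields a winning strategy on $(P,v)$. The technical heart of both directions is the priority bookkeeping in the alternating case, which rests on the fact that $\sqsubseteq$-domination of profiles forces $\sqsubseteq$-domination of the worst-case tail min-priorities along matched plays.
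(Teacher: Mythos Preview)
Your proposal is correct and follows exactly the approach the paper indicates: the paper's own proof is a two-sentence sketch (``Translation of strategies.\ Because the types agree, neither player can be worse off in one game.''), and you have unpacked precisely this translation-of-strategies argument, including the split into outer and inner parts, the use of equal $\strategytype{}{}$'s to match optimal profiles across $Q$ and $Q'$, and the priority bookkeeping for the infinitely-alternating case. The only thing to be slightly careful about in a full write-up is the alternating case, where the transfer of the parity condition rests on the observation that an odd segment-minimum $p_i'$ with $p_i'\sqsubseteq r_i$ forces $r_i$ odd and $r_i\le p_i'$, which then pushes a small odd priority into the coupled $\pi$-play and contradicts its being winning; you allude to this correctly.
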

\begin{proof}
  Translation of strategies.  Because the types agree, neither player
  can be worse off in one game.
\end{proof}

\subsection{Definable Profiles}
\label{sec:definable_profiles}

In the next step, we would like to encode a profile in a formula.
Given a profile $y$ in a model checking game and a starting point $x =
(x',\psi)$, we would like to define a formula $ψ^y$ with the property
that $ψ^y$ is true on the node $x'$ in the structure if and only if
the profile $y$ is possible on $(P,x)$.  However, we do not know how
to do this.

Hence we weaken the restriction and want $ψ^y$ to be true iff a
profile $y' \sqsubseteq y$ is possible.  This is enough for our
purposes because the type of $x$ only cares about
$\sqsubseteq$-minimal profiles.  This formula turns out to be
definable.  Using a suitable definition of $ψ^y$, we get the following
theorem.

\begin{theorem}
  \label{thm:definable_profiles}
  Let $\seq P$ be a sequence of proposition symbols disjoint from
  $\sigma$.  Let $φ ∈ L_μ[\sigma \cup P]$, $M, v$ be a
  $\sigma$-structure and $\seq X$ be a sequence of nodes of $M$.  For
  $\ind{ψ} ∈ \CL(φ)$, $y ∈ \strategyprofiles(\MCgame{M}{φ})$, it holds
  that $M,v ⊧ \defineprofile{\ind{ψ}}{y}$ iff there is a positional
  partial winning strategy $\pi$ for $(\MCgame{M}{φ},(v,\ind{ψ}))$
  such that $\strategyprofile(\pi,(v,\ind{ψ})) \sqsubseteq y$.
\end{theorem}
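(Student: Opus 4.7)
The plan is to construct $\defineprofile{\ind{\psi}}{y}$ as a specific priority-tracking variant of a formula in $\CL(\varphi)$, so that satisfaction at $v$ corresponds exactly to the existence of a positional partial winning strategy from $(v, \psi)$ with profile $\sqsubseteq y$. The construction proceeds by recursion on the (annotated) structure of $\psi$: the Boolean cases are distributed componentwise, and at each modal subformula $\Diamond \chi$ I insert the disjunct $\bigvee_{R \in Q} R$ where $Q$ consists of exactly those $P_i$ such that $(u, p) \in y$ for $u = (x_i, \Diamond \chi)$ and the priority $p$ is consistent with the priorities of the fixpoint operators enclosing $\Diamond \chi$ in the annotated syntax tree; the dual conjunction $\bigwedge_{R \in Q'} \neg R$ is inserted at $\Box \chi$ to forbid \podd from escaping to non-allowed interface nodes. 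Fixpoint subformulas are handled through the unfolding already built into the Fischer-Ladner closure, with the profile parameter updated to reflect the new minimum priority available once the fixpoint has been crossed. By \cref{lem:clx_idempotent} and the definitions of $\PT_P$ and $\CL$, the resulting formula lies in $\CL_P(\set{\varphi})$.

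For the right-to-left direction, given a positional partial winning strategy $\pi$ with $\strategyprofile(\pi, (v, \psi)) \sqsubseteq y$ provided by \cref{lem:positional_partial_strategies_exist}, I would read off a witnessing valuation for $M, v \models \defineprofile{\ind{\psi}}{y}$ by mimicking $\pi$: when $\pi$ chooses a genuine modal move in the game, take the corresponding $\Diamond$ disjunct; when $\pi$ exits at an interface node $(x_i, \Diamond \chi)$, take the added disjunct $\bigvee_{R \in Q} R$, which is satisfied because $P_i$ holds at $x_i$ and by the compatibility condition used to define $Q$. Fixpoints translate via the standard $\mu/\nu$ semantics, with the fixed-point set of winning positions corresponding to the positions from which the positional $\pi$ wins. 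For the converse, a witness for $M, v \models \defineprofile{\ind{\psi}}{y}$ furnishes \peven with a strategy in $\MCgame{M}{\varphi}$ from $(v, \psi)$, and the compatibility conditions built into $Q$ and $Q'$ guarantee that the exits and blocked exits yield a profile $\sqsubseteq y$.

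The main obstacle will be aligning the priority bookkeeping in the formula with the priorities accumulated along plays in the game. Since all non-fixpoint nodes of $\MCgame{M}{\varphi}$ carry the maximum priority $p'$, the priorities relevant along a finite path to an interface node are exactly those of the fixpoint subformulas visited during unfoldings. The choice of $Q$ at each modal subformula must therefore reflect which fixpoints syntactically enclose it in the annotated $\varphi$, and this choice must be maintained consistently across Fischer-Ladner unfoldings, because repeated unfoldings can place the same subformula at many syntactically distinct positions with different enclosing priorities. This is precisely why the index $i$ is attached to every element of $\sub(\varphi)$ and $\CL(\varphi)$: it allows $\PT_P$ to tag each copy independently. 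Getting this bookkeeping correct, so that the construction is simultaneously well-defined (lying in the finite set $\CL_P(\set{\varphi})$) and sound for $\strategyprofile$, is the technical core of the proof.
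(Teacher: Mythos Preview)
Your proposal is essentially the paper's approach: build $\psi^y$ as a priority-tracking variant of $\psi$, choosing the inserted propositions at each modal position according to $y$ and the enclosing fixpoint priorities, then translate strategies back and forth. One point of divergence is worth flagging. You propose to handle fixpoints by \emph{updating the profile parameter} across them; the paper instead keeps $y$ fixed throughout and defines $(\alpha X.\chi)^y = \alpha X.(\chi^y)$, pushing all priority bookkeeping into the quantity $\prio{\varphi}{\psi}{\chi}$, the minimum priority among fixpoints syntactically enclosing $\chi$ in $\psi$. The crux is then \cref{lem:prioprio}: every path in $\MCgame{M}{\psi}$ from $(v,\psi)$ to $(x,\chi)$ has minimum priority \emph{exactly} $\prio{\varphi}{\psi}{\chi}$, so the dynamic priority along a play is completely determined by the syntactic position of the current subformula. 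This is where the consistency of $\varphi$ with the fixed sequence $\seq Z$ is used. Your ``update $y$'' formulation can be made to work and is equivalent, but it obscures this lemma and makes it less transparent that $\psi^y$ is literally an element of $\PT_P(\psi) \subseteq \CL_P(\{\varphi\})$; the paper's static version keeps that containment immediate.
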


\begin{corollary}
  \label{cor:type_determined_by_type}
  Let $\seq P$ be a sequence of proposition symbols disjoint from
  $\sigma$.  Let $φ ∈ L_μ[\sigma \cup P]$, $M,v$ be a
  $\sigma$-structure, $\seq X ∈ V(M)^{\abs{P}}$ and $\ind{ψ} ∈
  \CL(φ)$.  Then
  \begin{align*}
    \strategytype{\MCgame{M}{φ}}{(v,\ind{ψ})} &=
    \Bigl\{{y ∈ \strategyprofiles(\MCgame{M}{φ})} \mathrel{}\Big|\mathrel{}\\
    & \hspace{-2.2cm}
      \text{$M,v ⊧ \defineprofile{\ind{ψ}}{y}$ and there is no $y'
        \sqsubset y$ with $M,v ⊧ \defineprofile{\ind{ψ}}{y'}$}\Bigr\}.
  \end{align*}
  That is, $\ctypeWithIndex{\set{φ}}{\seq P}{M}{v}{\seq X}$ determines
  $\strategytype{\MCgame{M}{φ}}{(v,\ind{ψ})}$.
\end{corollary}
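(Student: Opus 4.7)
The plan is to extract the corollary directly from Theorem~\ref{thm:definable_profiles} by analyzing when a profile is both achievable and $\sqsubseteq$-minimal, using the positional reduction provided by Lemma~\ref{lem:positional_partial_strategies_exist}. Write $P = \MCgame{M}{\phi}$ and $x = (v,\ind{\psi})$.

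First I would prove the set-theoretic equality by showing both containments. For the $(\subseteq)$ direction, take $y \in \strategytype{P}{x}$, witnessed by a partial winning strategy $\pi$ with $\strategyprofile(\pi,x) = y$ and with $y$ being $\sqsubseteq$-minimal among profiles of partial winning strategies from $x$. By Lemma~\ref{lem:positional_partial_strategies_exist} there is a positional partial winning strategy $\rho$ with $\strategyprofile(\rho,x) \sqsubseteq y$, and by minimality this inequality is an equality. Theorem~\ref{thm:definable_profiles} then yields $M,v \models \defineprofile{\ind{\psi}}{y}$. If some $y' \sqsubset y$ satisfied $M,v \models \defineprofile{\ind{\psi}}{y'}$, the theorem would produce a positional partial winning strategy whose profile is $\sqsubseteq y' \sqsubset y$, contradicting the minimality of $y$.

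For the $(\supseteq)$ direction, assume $M,v \models \defineprofile{\ind{\psi}}{y}$ and that no $y' \sqsubset y$ satisfies $M,v \models \defineprofile{\ind{\psi}}{y'}$. Theorem~\ref{thm:definable_profiles} gives a positional partial winning strategy $\rho$ with $y_0 \coloneq \strategyprofile(\rho,x) \sqsubseteq y$. If $y_0 \sqsubset y$, then $\rho$ itself witnesses $M,v \models \defineprofile{\ind{\psi}}{y_0}$ via the theorem, contradicting the second hypothesis; hence $y_0 = y$, so $y$ is the profile of an actual partial winning strategy. To see that $y$ is $\sqsubseteq$-minimal among such profiles, suppose some partial winning strategy $\pi'$ has profile $y' \sqsubseteq y$; by Lemma~\ref{lem:positional_partial_strategies_exist} we may assume it positional, and Theorem~\ref{thm:definable_profiles} then gives $M,v \models \defineprofile{\ind{\psi}}{y'}$, which by hypothesis forces $y' = y$. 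Thus $y \in \strategytype{P}{x}$.

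For the final sentence, I would observe that each formula $\defineprofile{\ind{\psi}}{y}$ used above belongs to $\CL_P(\{\phi\})$ (this is exactly the point of the construction underlying Theorem~\ref{thm:definable_profiles}: the priority-tracking variants in $\PT_P$ were designed so that the profile-defining formulas live inside $\CL_P(\{\phi\})$). Hence the truth of each such formula at $v$ is encoded in $\ctypeWithIndex{\{\phi\}}{\seq P}{M}{v}{\seq X}$, and since the equality just proved expresses $\strategytype{P}{x}$ purely in terms of which $\defineprofile{\ind{\psi}}{y}$ hold at $v$, the strategy type is a function of $\ctypeWithIndex{\{\phi\}}{\seq P}{M}{v}{\seq X}$. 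The only delicate step is the second containment, where one must rule out the possibility that the positional strategy extracted from Theorem~\ref{thm:definable_profiles} beats $y$ strictly; this is handled by the bootstrapping argument above, which reapplies the theorem to the witnessing strategy.
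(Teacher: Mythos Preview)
Your argument is correct and is precisely the natural unpacking the paper leaves to the reader: the corollary is stated without proof, immediately after Theorem~\ref{thm:definable_profiles}, and your two containments together with the observation that each $\defineprofile{\ind{\psi}}{y}$ lies in $\PT_P(\CL(\{\phi\})) = \CL_P(\{\phi\})$ are exactly what is needed. One minor tightening: in the minimality step of $(\supseteq)$, after passing to a positional strategy via Lemma~\ref{lem:positional_partial_strategies_exist} you get a profile $y'' \sqsubseteq y'$ rather than $y'$ itself, but the hypothesis then forces $y'' = y$ and hence $y' = y$ all the same.
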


Before we can explain $ψ^y$, we need one more definition.
\begin{definition}
  For an annotated $φ ∈ L_μ[\sigma]$, $\ind{ψ} ∈ \CL(φ)$ and
  $\ind{\chi} ∈ \sub(ψ)$, let $\prio{φ}{\ind{ψ}}{\ind{\chi}}$ be the
  minimum priority of all fixpoint operators that enclose $\ind{\chi}$
  in $\psi$.
\end{definition}
\begin{definition}\label{def:psi_y}
  Let $\seq P = (P_1,\ldots,P_k)$ be a sequence of proposition symbols
  disjoint from $\sigma$.  Let $φ ∈ L_μ[\sigma \cup P]$ be a formula,
  $M$ be a $\sigma$-structure and $X = (x_1,\ldots,x_k) ∈ V(M)^k$.
  Let $\ind{ψ} ∈ \CL(φ)$ and $y ∈ \strategyprofiles(\MCgame{M}{φ})$.
  For every $\ind{ψ'} ∈ \sub^+(ψ)$, there is a formula $\ind{φ'} ∈
  \CL(φ)$ corresponding to $\ind{ψ'}$.  We inductively define an
  operation $\defineprofile{\cdot}{y}$ over the structure of $ψ'$.
  \begin{align*}
    \defineprofile{\ind{V}}{y} &\coloneq V,
    \quad\defineprofile{(\ind{\neg V})}{y} \coloneq \neg V &&
    \text{for prop.\ or var.\ $V$}\\
    \defineprofile{(\ind{\chi \ast \chi'})}{y} &\coloneq
    (\defineprofile{\chi}{y}) \ast (\defineprofile{\chi'}{y}) &&
    \text{for
      ${\ast} ∈ \set{{∨},{∧}}$}\\
    \defineprofile{(\ind{\alpha X.\chi})}{y} &\coloneq
    \alpha X.(\defineprofile{\chi}{y}) && \text{for $\alpha ∈ \set{μ,ν}$}\\
    \defineprofile{(\ind{◊\chi})}{y} &\coloneq
    \biggl(\Bigl(\bigvee_{i ∈ N} P_i\Bigr) ∨ ◊(\defineprofile{\chi}{y})\biggr)\\
    \defineprofile{(\ind{□\chi})}{y} &\coloneq
    \biggl(\Bigl(\bigwedge_{i ∈ N} \neg P_i\Bigr) ∧
      □(\defineprofile{\chi}{y})\biggr)
  \end{align*}
  In the case $\ind{◊\chi}$, we use
  \begin{align*}
    N &\coloneq \{ 1 \leq i \leq k \mid {}\\
    &\qquad\text{$((x_{i},\ind{φ'}),p') ∈ y$ for some $p' \sqsupseteq
      \prio{φ}{ψ}{\ind{◊\chi}}$} \}.
  \end{align*}
  In the case $\ind{□\chi}$, we use
  \begin{align*}
    N &\coloneq \{ 1 \leq i \leq k \mid {}\\
    &\qquad\text{$((x_{i},\ind{φ'}),p') ∉ y$
      for all $p' \sqsubset \prio{φ}{ψ}{\ind{□\chi}}$} \}.
  \end{align*}
  In both cases, $\ind{φ'} ∈ \CL(φ)$ is the formula corresponding to
  $\ind{◊\chi}$ or $\ind{□\chi}$, respectively.
\end{definition}

The motivation behind this seemingly quite arbitrary definition is
that if a profile says we can reach $(x_i,\ind{◊\chi})$ with the worst
priority $p'$, and the actual priority we have is at least as good as
$p'$, we are allowed to take the shortcut and leave the game.  That is
why we add $X_i$ to the disjunction in this case.  Of course, we need
to pay close attention to the games that are involved, because
$(x_i,\ind{◊\chi})$ is not a node in $\MCgame{M}{φ}$ and $y$ is not a
profile of $\MCgame{M}{ψ}$.  However, this is not a problem because
every $\ind{◊\chi}$ corresponds to a unique $\ind{φ'} ∈ \CL(φ)$, and
the game $\MCgame{M}{ψ}$ is a partial unfolding of the
$\MCgame{M}{φ}$.  This means that every strategy on one of these games
is also a strategy on the other game, although not necessarily
positional.

Dually, in the case $\ind{□\chi}$, if the actual priority is worse
than what the profile wants, we must make sure that
$(x_i,\ind{□\chi})$ is not reached, so we add $\neg X_i$ with a
conjunction.

A formal statement of this explanation is
\cref{thm:definable_profiles}.  Before we can prove this, however, we
need a technical lemma about $\prio{φ}{\ind{ψ}}{\ind{\chi}}$.
\begin{lemma}
  \label{lem:prioprio}
  Let $M$ be a structure, $v,x ∈ V(M)$ and $φ ∈ L_μ$, $\ind{ψ} ∈
  \CL(φ)$ and $\ind{\chi} ∈ \sub(ψ)$.  Then every path from
  $(v,\ind{\psi})$ to $(x,\ind{\chi})$ in $\MCgame{M}{ψ}$ (with
  priorities according to $φ$) has $\prio{φ}{\ind{ψ}}{\ind{\chi}}$ as
  its minimum priority.
\end{lemma}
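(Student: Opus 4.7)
The plan is to reduce the minimum-priority computation to a structural analysis of which fixpoint nodes are visited along the path. The key initial observation is that in $\MCgame{M}{\psi}$ every non-fixpoint node has the maximum priority $p'$, so only fixpoint nodes of the form $(v', \alpha Z_j.\sigma_j)$ contribute non-trivial priorities $p_j$, and hence the minimum priority along any path equals $\min\{p_j : (v',\alpha Z_j.\sigma_j) \text{ is visited}\}$. Since $\phi$ is consistent with $\seq{Z}$ and the priority sequence is strictly increasing, writing $i_*$ for the smallest index among enclosing fixpoints of $\chi$ in $\psi$, we have $\prio{\phi}{\psi}{\chi} = p_{i_*}$ and $\alpha Z_{i_*}$ is the outermost enclosing fixpoint of $\chi$.

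For the upper bound $\text{min} \leq p_{i_*}$, I would argue that every path from $(v,\psi)$ to $(x,\chi)$ must pass through a node of the form $(v', \alpha Z_{i_*}.\sigma_{i_*})$. The position of $\chi$ lies inside the subtree of $\alpha Z_{i_*}$ in $\psi$'s syntax tree, and any game edge entering that subtree either unfolds this fixpoint from a direct parent or is a closure-induced variable jump that lands precisely on the fixpoint node; in either case the fixpoint is visited and priority $p_{i_*}$ appears on the path.

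For the lower bound $\text{min} \geq p_{i_*}$, I would show by contradiction that no fixpoint $\alpha Z_j$ with $j < i_*$ can appear on the path. Such an $\alpha Z_j$ is not an enclosing fixpoint of $\chi$ in $\psi$ (otherwise $j$ would lie in $\{i_1,\ldots,i_k\}$ and hence be at least $i_*$). After entering $\alpha Z_j$'s scope by unfolding, the path navigates $\closure_\psi(\sigma_j)$; by consistency of $\phi$ with $\seq{Z}$, the free variables of $\sigma_j$ lie in $\{Z_1,\ldots,Z_j\}$, so every subsequent closure-induced jump goes to an ancestor fixpoint $\alpha Z_l$ with $l \leq j < i_*$. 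By minimality of $i_*$, none of these $\alpha Z_l$ is an ancestor of $\chi$ in $\psi$, so $\chi$ lies outside their subtrees. Iterating, the path is confined to a region of $\psi$'s syntax tree that avoids $\chi$'s position and hence never reaches $(x,\chi)$, contradicting the existence of the path.

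The main obstacle is making this trapping argument in the lower bound rigorous: one must track carefully how positions in $\sub(\psi)$ evolve along the path as $\closure_\psi$ substitutes free variables by their definitions, and verify that the set of positions reachable from a low-index fixpoint is indeed confined to a subtree avoiding $\chi$. This is a careful bookkeeping exercise combining the three edge types of the game with the consistency constraint on $\phi$, but once the invariant is set up the argument closes cleanly.
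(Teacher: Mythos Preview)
Your proposal is correct, and your upper-bound argument (to enter the subtree rooted at the $Z_{i_*}$-binder one must pass through that binder) coincides with the paper's. One harmless slip: the binder with the smallest index $i_*$ need not be the \emph{outermost} enclosing fixpoint of $\chi$---consistency does not force indices to increase along a root-to-leaf path---but your argument only uses that it is \emph{some} ancestor of $\chi$, which is all that is needed.

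For the lower bound the paper takes a shorter route than your iterated trapping. It looks at the \emph{first} node on the path carrying the minimum priority $p$, say $(v',\alpha Z_i.\psi')$ at position $q$. Since every ancestor of $q$ has already been visited on the path and hence has priority $>p$ (i.e., index $>i$), consistency forces the raw subformula $\alpha Z_i.\psi'$ of $\psi$ to be closed: any free variable would have index $<i$ and would require an ancestor binder of index $<i$. A closed subformula traps the remainder of the path in one stroke---no jump can leave its subtree---so $\chi$ must lie below $q$, making $\alpha Z_i$ an enclosing fixpoint of $\chi$ and giving $p=p_i\geq\prio{\phi}{\psi}{\chi}$, the desired contradiction. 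Your invariant across a chain of low-index fixpoints achieves the same end, but the paper's ``first minimum'' observation collapses that whole chain into a single closed-formula step.
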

\begin{proof}
  Let $p$ be the minimum priority of a path from $(v,\ind{ψ})$ to
  $(x,\ind{\chi})$.  Clearly $p \leq \prio{φ}{\ind{ψ}}{\ind{\chi}}$
  because $\ind{\chi}$ is a subformula of $\ind{\psi}$, so every
  fixpoint operator enclosing $\ind{\chi}$ must have been visited
  at some point on the path.

  Assume to the contrary that $p < \prio{φ}{\ind{ψ}}{\ind{\chi}}$.
  This means that there is a node $(v',\ind{\priority{p}{\alpha}
    X.\psi'})$ with $\alpha ∈ \set{μ,ν}$ on the path.  Assume this is
  the first node of priority $p$ on the path.  The priorities increase
  with respect to a fixed sequence of variables $\seq Z$, so $\psi'$
  cannot contain a free variable $Y$ for any $Y$ that is quantified
  earlier, or $p$ would have to be larger.  But this means that
  $\ind{\priority{p}{\alpha} X.\psi'}$ is a closed formula.  So in
  order to reach $(x,\ind{\chi})$, the formula $\ind{\chi}$ must be a
  subformula of $\ind{\priority{p}{\alpha} X.\psi'}$, and we have that
  $\priority{p}{\alpha} X$ encloses $\ind{\chi}$, a contradiction to
  $p < \prio{φ}{\ind{ψ}}{\ind{\chi}}$.
\end{proof}

We split the proof of \cref{thm:definable_profiles} into two
directions.  \Cref{lem:winning_strategy_translation} shows the first
direction and \cref{lem:winning_strategy_translation_reversed} the
other.
\begin{lemma}
  \label{lem:winning_strategy_translation}
  Let $\seq P$ be a sequence of $k$~proposition symbols disjoint from
  $\sigma$.  Let $φ ∈ L_μ[\sigma \cup P]$, $M,v$ be a
  $\sigma$-structure, $\seq X ∈ V(M)^k$, $\ind{ψ} ∈ \CL(φ)$, $y ∈
  \strategyprofiles(\MCgame{M}{φ})$.  Let $\pi$ be a partial
  winning strategy for $(\MCgame{M}{ψ},(v,\ind{ψ}))$ and $\pi_φ$ be
  the corresponding strategy on $\MCgame{M}{φ}$.  If
  $\strategyprofile(\pi_φ,(v,\ind{ψ})) \sqsubseteq y$, then there
  exists a winning strategy $\pi'$ for
  $(\MCgame{M}{\defineprofile{\ind{ψ}}{y}},(v,\defineprofile{\ind{ψ}}{y}))$.
\end{lemma}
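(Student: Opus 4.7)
The plan is to translate the partial winning strategy $\pi$ into a total winning strategy $\pi'$ for $\MCgame{M}{\defineprofile{\ind{\psi}}{y}}$. The formula $\defineprofile{\ind{\psi}}{y}$ has the same syntactic skeleton as $\psi$ except that each occurrence of $\ind{◊\chi}$ and each occurrence of $\ind{□\chi}$ is augmented by an extra disjunct $\bigvee_{i \in N} P_i$ or conjunct $\bigwedge_{i \in N} \neg P_i$; this yields a natural correspondence between the ``old'' positions of the two model-checking games, and I would take $\pi'$ to mirror $\pi$ on every old position. At an \peven-owned node $(w, \defineprofile{\ind{◊\chi}}{y}) = (w, (\bigvee_{i \in N} P_i) \vee ◊(\defineprofile{\chi}{y}))$, $\pi'$ takes the $P_i$-shortcut precisely when $\pi$ leaves the original partial game at the corresponding interface node $(x_i, \ind{◊\chi})$; otherwise $\pi'$ descends into the right disjunct and imitates $\pi$'s choice of $M$-successor. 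The \podd-owned $\wedge$-nodes introduced at $□$-positions are not in $\pi'$'s domain.

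Correctness would be verified by case analysis on an arbitrary $\pi'$-conforming play $\Pi'$ beginning at $(v, \defineprofile{\ind{\psi}}{y})$. If $\Pi'$ never enters the extra disjunct or conjunct, it projects onto a $\pi$-conforming play $\Pi$ in $\MCgame{M}{\psi}$ whose priorities coincide position-by-position with those of $\Pi'$ because $\defineprofile{\cdot}{y}$ preserves every fixpoint operator; since $\pi$ is winning, $\Pi$ and hence $\Pi'$ satisfy the parity condition. Plays terminating at $(x_i, P_i)$ via $\pi'$'s shortcut end at a sink owned by \podd and are immediate wins for \peven.

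The critical case is \podd diverting the play from some $(w, \defineprofile{\ind{□\chi}}{y})$ into the conjunct $\bigwedge_{j \in N} \neg P_j$ and choosing $(w, \neg P_j)$ for some $j \in N$; \peven wins this sink iff $w \neq x_j$. The prefix of the play inside the old part of the new game projects onto a $\pi$-conforming path in $\MCgame{M}{\psi}$ from $(v, \ind{\psi})$ to $(w, \ind{□\chi})$ whose minimum priority equals $\prio{\phi}{\ind{\psi}}{\ind{□\chi}}$ by \cref{lem:prioprio}. Transferring this to $\MCgame{M}{\phi}$, if $w = x_i$ for some interface index $i$ then $\strategyprofile(\pi_\phi, (v, \ind{\psi}))$ must contain the entry $((x_i, \ind{\phi'}), \prio{\phi}{\ind{\psi}}{\ind{□\chi}})$, where $\ind{\phi'} = \closure_\phi(\ind{□\chi})$. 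The hypothesis $\strategyprofile(\pi_\phi, (v, \ind{\psi})) \sqsubseteq y$ then forces a matching entry in $y$ whose priority is exactly what \cref{def:psi_y} uses to decide membership in $N$ for the $□$-case, ruling out $w \in \{x_j : j \in N\}$. The symmetric argument handles the $◊$-case: when $\pi$ exits at $(x_i, \ind{◊\chi})$ with minimum priority $\prio{\phi}{\ind{\psi}}{\ind{◊\chi}}$, the $y$-entry delivered by $\sqsubseteq y$ places $i$ into the $N$ computed in \cref{def:psi_y}, so $\pi'$'s shortcut is legitimate.

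The main obstacle is the bookkeeping connecting $\MCgame{M}{\psi}$ with $\MCgame{M}{\phi}$: the strategy $\pi$ is defined on the former while the profile and the hypothesis $\sqsubseteq y$ live in the latter, so one must verify carefully that $\pi_\phi$ records in its profile every exit $\pi$ performs (via $\closure_\phi$) and that minimum priorities along corresponding paths agree. A subsidiary technicality to dispatch is that the transformation $(\cdot)^y$ commutes with fixpoint unfolding in the sense that $\closure_{\defineprofile{\psi}{y}}(\defineprofile{\chi}{y})$ coincides with $\defineprofile{\closure_\phi(\chi)}{y}$, so that the natural bijection between positions survives $\mu$- and $\nu$-transitions in both games.
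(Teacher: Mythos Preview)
Your proposal is correct and follows essentially the same approach as the paper: build $\pi'$ by mirroring $\pi$ through the obvious syntactic correspondence between $\sub(\psi)$ and $\sub(\psi^y)$, resolve the new $\vee$-nodes by taking the $P_i$-shortcut at interface exits, and verify that every $\pi'$-conforming play projects to a $\pi$-conforming play with identical priorities (using \cref{lem:prioprio} to pin down the minimum priority reaching a $\Diamond$/$\Box$-position). The only difference is cosmetic: the paper first reduces to positional $\pi$ and then lets $\pi'$ take the $P_i$-shortcut at $(w,(\bigvee_{j\in N}P_j)\vee\Diamond\chi^y)$ whenever $w=x_i$ for some $i\in N$ (independently of whether $\pi$ actually exits there), whereas you trigger the shortcut exactly when $\pi$ exits---both choices yield a winning $\pi'$, and your explicit discussion of the $\Box$-case and of the commutation of $(\cdot)^y$ with closure is more detailed than what the paper spells out.
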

\begin{proof}
  Let $\pi$ be as required.  Without loss of generality we are going
  to assume that $\pi_φ$ is a positional strategy.  According to
  \cref{lem:positional_partial_strategies_exist}, this is always
  possible.  Then $\pi$ can be chosen to be positional, too.

  There is an obvious mapping from $\sub(\ind{ψ})$ to
  $\sub(\defineprofile{\ind{ψ}}{y})$ because
  $\defineprofile{\ind{ψ}}{y}$ is only a slightly modified version of
  $\ind{ψ}$.

  Define $\pi'$ positionally on
  $\MCgame{M}{\defineprofile{\ind{ψ}}{y}}$ so that it follows $\pi$
  wherever possible using the mapping we just described.  The only
  points where $\pi'$ is undefined are the nodes of the form
  $\overline{w} \coloneq (w,\ind{\bigvee_{i∈N} P_i ∨ ◊\chi})$.  On
  these nodes, if $w = x_i ∈ X$ for some $i ∈ N$, define
  $\pi'(\overline{w}) = (w,P_i)$.  Otherwise, define
  $\pi'(\overline{w}) = (w,\ind{◊\chi})$.  We claim that $\pi'$ is a
  strategy on
  $(\MCgame{M}{\defineprofile{\ind{ψ}}{y}},(v,\defineprofile{\ind{ψ}}{y}))$.

  Let $(x_i,\ind{◊\chi}) ∈ \MCgame{M}{\defineprofile{\ind{ψ}}{y}}$ be
  such that $(x_i,\ind{◊\chi})$ is reachable in
  $\MCgame{M}{\defineprofile{\ind{ψ}}{y}}^{\pi'}$ from
  $(v,\defineprofile{\ind{ψ}}{y})$ but $(x_i,\ind{◊\chi}) ∉
  \dom(\pi')$.  Let $(v_1,\ldots,v_n)$ be a $\pi'$-conforming path
  with $v_1 = (v,\defineprofile{\ind{ψ}}{y})$ and $v_n =
  (x_i,\ind{◊\chi})$ with minimum priority $p$.  This path corresponds
  to a $\pi$-conforming path in $\MCgame{M}{ψ}$ starting from
  $(v,\ind{ψ})$ with the same minimum priority, and hence a
  $\pi_φ$-conforming path in $\MCgame{M}{φ}$ with the same minimum
  priority.  By \cref{lem:prioprio}, we have $p =
  \prio{φ}{\ind{ψ}}{\ind{◊\chi}}$.

  Let $\ind{◊\chi'} ∈ \CL(φ)$ be the unique subformula of $φ$
  corresponding to $\ind{◊\chi}$.  Then we have
  $((x_i,\ind{◊\chi'}),p') ∈ \strategyprofile(\pi_φ,(v,\ind{ψ}))$ for
  some $p' \sqsupseteq p$ and hence
  $((x_i,\penalty0\ind{◊\chi'}),\penalty0p'') ∈ y$ for some $p''
  \sqsupseteq p'$.  By the construction of
  $\defineprofile{\ind{ψ}}{y}$, the node $(x_i,\ind{◊\chi})$ in
  $\MCgame{M}{\defineprofile{\ind{ψ}}{y}}$ must have a unique
  predecessor $(x_i,\ind{\bigvee_{i∈N} P_i ∨ ◊\chi})$ for some set
  $N$.  Recall the definition of $N$,
  \begin{align*}
    N &\coloneq \{ 1 \leq i \leq k \mid {}\\
    &\qquad\text{$(x_{i},\ind{◊\chi'},q) ∈ y$ for some $q \sqsupseteq
      \prio{φ}{ψ}{\ind{◊\chi}}$} \}.
  \end{align*}
  We find that $i ∈ N$, so the path was not $\pi'$-conforming.

  We need to show that $\pi'$ is winning.  Let $(v_1,\ldots,v_n)$ be a
  maximal $\pi'$-conforming path in
  $\MCgame{M}{\defineprofile{\ind{ψ}}{y}}$ with $v_1 =
  (v,\defineprofile{\ind{ψ}}{y})$.  By definition of $\pi'$, the last
  node cannot be $(w,P_i)$ for $w \neq x_i$.

  Assume $v_n ∈ V_◊$, that is, the path is losing.  The same path can
  be viewed as a maximal $\pi$-conforming path in $\MCgame{M}{ψ}$.  In
  $\MCgame{M}{ψ}$, the last node is also in $V_◊$ and has no
  successors, so we would have a $\pi$-conforming losing path in
  $\MCgame{M}{ψ}$, which contradicts the assumption that $\pi$ was a
  partial winning strategy.

  Clearly all infinite paths starting from
  $(\MCgame{M}{\defineprofile{\ind{ψ}}{y}},(v,\defineprofile{\ind{ψ}}{y}))$
  can never visit a node of the form $(w,P_i)$, so they can be viewed
  as paths on $\MCgame{M}{ψ}$.  They visit exactly the same
  priorities.  This implies that $\pi'$ is a winning strategy.
\end{proof}

\begin{lemma}
  \label{lem:winning_strategy_translation_reversed}
  Let $\seq P$ be a sequence of $k$~proposition symbols disjoint from
  $\sigma$.  Let $φ ∈ L_μ[\sigma \cup P]$, $M,v$ be a
  $\sigma$-structure, $\seq X ∈ V(M)^k$, $\ind{ψ} ∈ \CL(φ)$, $y ∈
  \strategyprofiles(\MCgame{M}{φ})$. Let $\pi'$ be a winning strategy
  for
  $(\MCgame{M}{\defineprofile{\ind{ψ}}{y}},(v,\defineprofile{\ind{ψ}}{y}))$.

  Then there exists a partial winning strategy $\pi$ for
  $(\MCgame{M}{ψ},(v,\ind{ψ}))$ such that for the corresponding
  partial strategy $\pi_φ$ on $\MCgame{M}{φ}$ it holds that
  $\strategyprofile(\pi_φ,(v,\ind{ψ})) \sqsubseteq y$.
\end{lemma}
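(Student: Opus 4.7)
My plan is to invert the construction of \cref{lem:winning_strategy_translation} and read off $\pi$ directly from the choices of $\pi'$. By \cref{thm:positional_strategies_exist} I first replace $\pi'$ with a positional winning strategy; the translation below then automatically yields a positional $\pi$, and hence a positional $\pi_\phi$ via the subformula correspondence between $\MCgame{M}{\psi}$ and $\MCgame{M}{\phi}$ mentioned earlier in the paper.

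At an Even $\Diamond$-node $(w,\ind{\Diamond\chi}) \in \MCgame{M}{\psi}$ I inspect $\pi'$ at the corresponding modified node $(w,\ind{\bigvee_{i \in N} P_i \vee \Diamond(\defineprofile{\chi}{y})})$, where $N$ is as in \cref{def:psi_y}. Two cases arise. Either $\pi'$ takes the $\Diamond$-branch and picks some successor $(w',\ind{\defineprofile{\chi}{y}})$, in which case $\pi$ mimics by selecting $(w',\ind{\chi})$; or $\pi'$ takes the disjunction branch and eventually picks some $P_i$, in which case $\pi'$ being winning forces $(w,P_i)$ to be an Odd-stuck node, so $M,w \vDash P_i$ and hence $w = x_i$. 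In the second case I leave $\pi$ undefined at the $\Diamond$-interface node $(x_i,\ind{\Diamond\chi})$, marking this as an Even exit. All remaining Even nodes (disjunctions, unfoldings, honest $\neg P$-leaves) simply copy $\pi'$.

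That $\pi$ is a winning partial strategy is routine: an infinite $\pi$-conforming play lifts through the subformula correspondence to an infinite $\pi'$-conforming play that bypasses the inserted $\bigvee/\bigwedge$ nodes, and since these auxiliary nodes carry the dummy maximum priority, the parity condition agrees on both plays. A maximal finite $\pi$-conforming play either reaches an Even-exit interface node (Even does not lose) or mirrors a $\pi'$-conforming play ending at an Odd-stuck configuration (Even wins).

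The heart of the argument is the bound $\strategyprofile(\pi_\phi,(v,\ind{\psi})) \sqsubseteq y$. Fix a $\pi_\phi$-conforming path from $(v,\ind{\psi})$ to an interface node $(x_i,\ind{\alpha\chi_\phi})$ with $\alpha \in \set{\Diamond,\Box}$; via the subformula correspondence this path traces one in $\MCgame{M}{\psi}$, and \cref{lem:prioprio} pins its minimum priority to $p := \prio{\phi}{\ind{\psi}}{\ind{\alpha\chi}}$. For $\alpha = \Diamond$, the fact that $\pi'$ selects $P_i$ forces $i \in N$, yielding a witness $((x_i,\ind{\Diamond\chi_\phi}),q) \in y$ with $q \sqsupseteq p$ by the definition of $N$. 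For $\alpha = \Box$, the dual argument uses $\pi'$'s winning property against Odd's $\bigwedge$-moves to force $i \notin N$, which again produces a suitable $y$-entry. The main obstacle is the simultaneous bookkeeping across the three games $\MCgame{M}{\psi}$, $\MCgame{M}{\defineprofile{\ind{\psi}}{y}}$, and $\MCgame{M}{\phi}$, together with making sure that the $\sqsubseteq$-direction of the profile comparison --- dictated by the $\min$ in $\strategypreprofile$ and the $\sqsubseteq$-maximum in $\strategyprofile$ --- lines up with the asymmetric definitions of $N$ in the $\Diamond$- and $\Box$-cases.
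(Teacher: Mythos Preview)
Your proposal is correct and takes essentially the same approach as the paper's own proof. The paper's argument is in fact even terser than yours: it replaces $\pi'$ by a positional winning strategy, lets $\pi$ copy $\pi'$ positionally and be undefined at $(x_i,\ind{\Diamond\chi})$ whenever $\pi'$ selects the disjunct $P_i$, and then simply states that showing $\pi$ is a partial winning strategy with $\strategyprofile(\pi_\phi,(v,\ind{\psi})) \sqsubseteq y$ is ``similar to the proof of the previous lemma.'' Your sketch supplies the details the paper omits---the lifting of infinite plays, the appeal to \cref{lem:prioprio}, and the dual treatment of the $\Box$-case via Odd's $\bigwedge$-moves---but the construction and line of argument are identical.
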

\begin{proof}
  Let $\pi'$ be as required.  Assume $\pi'$ is a positional winning
  strategy.  Define $\pi$ (positionally) like $\pi'$ where possible.
  If $\pi'((w,\ind{\bigvee_{i∈N} P_i ∨ ◊\chi})) = (x_i,P_i)$ for some
  $\overline{w} ∈ V(\MCgame{M}{\defineprofile{\ind{ψ}}{y}})$, $N$ and
  $i$, then leave $\pi'((x_i,\ind{◊\chi}))$ undefined.

  Similar to the proof of the previous lemma one shows that $\pi'$ is
  a partial winning strategy and
  $\strategyprofile(\pi_φ,(v,\ind{ψ})) \sqsubseteq y$.
\end{proof}

\subsection{A Small Parity Game}
\label{sec:canonical_game}

With \cref{thm:definable_profiles} at our hands, we can now define a
partial parity game simulating the model checking game that only
depends on the types of some nodes in the original structure.  The
parity game consists of four layers of nodes.

\begin{enumerate}
\item One layer of ◊-nodes, one for each type, where \peven can choose
  a profile.
\item Then one layer of □-nodes, one for each profile, where \podd can
  choose one of the allowed paths.
\item Then a layer of nodes with out-degree~1 to ensure the priorities
  match the chosen path.
\item Finally a layer representing the interface.
\end{enumerate}

The edges only point from one layer to the next or from the last layer
back to the first layer.  Formally, let $M$ be a structure and $X =
\set{x_1,\ldots,x_k} ⊆ V(M)$.  Let $φ ∈ L_μ$.  First, we define the
layers described above.
\begin{align*}
  V_1 &\coloneq 2^{\strategyprofiles(\MCgame{M}{φ})}&
  V_3 &\coloneq \strategytargets(\MCgame{M}{φ})\\
  V_2 &\coloneq \strategyprofiles(\MCgame{M}{φ})&
  V_4 &\coloneq X \times \CL(φ).
\end{align*}

Next, we define the game $P^φ = (V, V_◊, E, \omega)$ with interface
$V_4$ depending only on $φ$ and the sets
$\ctypeWithIndex{\set{φ}}{\seq P}{M}{x_i}{\seq X}$, but not on $M$.
\begin{align*}
  V &\coloneq V_1 \cup V_2 \cup V_3 \cup V_4 \qquad E \coloneq E_1
  \cup E_2 \cup E_3 \cup E_4\\
  V_◊ &\coloneq V_1 \cup \set[(x_i,\ind{ψ}) ∈ V_4]{\text{$ψ$ starts with a ◊} }\\
  \omega(v) &\coloneq \begin{cases}
    p & \text{for $v = (x_i,\ind{ψ},p) ∈ V_3$}\\
    p' & \text{otherwise},
  \end{cases}
\end{align*}
where $p'$ is the maximum priority of $φ$.

For the set of edges, we connect the nodes according to the subset
relation and the nodes from $V_4$ back to their types.
\begin{align*}
  E_1 &\coloneq \set[(x,y) ∈ V_1 \times V_2]{y ∈ x}\\
  E_2 &\coloneq \set[(x,y) ∈ V_2 \times V_3]{y ∈ x}\\
  E_3 &\coloneq \set[\bigl((x_i,\ind{ψ},p), (x_i',\ind{ψ'})\bigr) ∈
  V_3 \times V_4]{\text{$(x_i,\ind{ψ}) = (x_i',\ind{ψ'})$}}\\
  E_4 &\coloneq \set[{(x,t) ∈ V_4 \times V_1}]{t =
    \strategytype{\MCgame{M}{φ}}{x}}.
\end{align*}

Note that $E_4$ is determined by the sets
$\ctypeWithIndex{\set{φ}}{\seq P}{M}{x}{\seq X}$ by
\cref{cor:type_determined_by_type}.

\begin{figure}
  \centering
  \begin{tikzpicture}[xscale=2.5,yscale=1.5]
    \tikzstyle{every node}=[circle,draw=black,thick,inner sep=3pt,minimum size=2mm]
    \tikzstyle{every path}=[->,thick,line width=1pt]
    \tikzstyle{intersection1}=[fill=green,thick]
    \tikzstyle{intersection2}=[fill=red]
    \tikzstyle{boxplayer}=[rectangle]
    \node [ellipse] (A) at (0,0) {$\strategytype{\MCgame{M}{φ}}{v}$};
    \node [boxplayer] (B1) at (-1,-1) {$\set{(v_1,1)}$};
    \node [boxplayer,align=left] (B2) at (0,-1) {$\{(v_1,0),$\\$(v_2,2),$\\$(v_4,0)\}$};
    \node [boxplayer] (B3) at (1,-1) {$\set{(v_2,3)}$};
    \node [boxplayer] (P1) at (-1.5,-2) {$z_1,1$};
    \node [boxplayer] (P2) at (-1,-2) {$z_1,0$};
    \node [boxplayer] (P3) at (-0.5,-2) {$z_2,2$};
    \node [boxplayer] (P4) at (1.5,-2) {$z_4,0$};
    \node [boxplayer] (P5) at (0.3,-2) {$z_2,3$};
    \node [ellipse] (X1) at (-1.5,-2.7) {$z_1$};
    \node [boxplayer,inner sep=5pt] (X2) at (-0.5,-2.7) {$z_2$};
    \node [ellipse] (X3) at (0.5,-2.7) {$z_3$};
    \node [boxplayer,inner sep=5pt] (X4) at (1.5,-2.7) {$z_4$};
    \draw (A) -- (B1);
    \draw (A) -- (B2);
    \draw (A) -- (B3);
    \draw (B1) -- (P1);
    \draw (B2) -- (P2);
    \draw (B2) -- (P3);
    \draw (B2) -- (P4);
    \draw (B3) -- (P5);
    \draw (P1) -- (X1);
    \draw (P2) -- (X1);
    \draw (P3) -- (X2);
    \draw (P4) -- (X4);
    \draw (P5) -- (X2);
  \end{tikzpicture}
  \caption{A part of $P^φ$}
  \label{fig:simulation_game}
\end{figure}

To illustrate this construction, assume that $\MCgame{M}{φ}$ has the
interface $\set{z_1,\ldots,z_4} ∈ X \times \CL(φ)$ and a node $v ∈
\MCgame{M}{φ}$ with $\strategytype{\MCgame{M}{φ}}{v} =
\bigl\{\set{(z_1,1)},\penalty0\{(z_1,0),(z_2,2),\penalty0(z_4,0)\},
\set{(z_2,3)}\bigr\}$.  \Cref{fig:simulation_game} illustrates a part
that could occur in the game $P^φ$.  In the full game $P^φ$, we would
also add the edges $(z_i, \strategytype{\MCgame{M}{φ}}{z_i}) ∈ V_4
\times V_1$.  In the node $\strategytype{\MCgame{M}{φ}}{v}$, \peven
can choose one of the possible profiles.  This corresponds to \peven
fixing a strategy $\pi$.  After fixing her strategy, \podd can choose
a path through the game conforming to this strategy.  The profile
tells us exactly what the worst possible paths are, and the layer
$V_3$ makes sure that the correct priority is visited.

The goal of this construction is to get a game such that the type of a
node labeled $\strategytype{\MCgame{M}{φ}}{v}$ is exactly
$\strategytype{\MCgame{M}{φ}}{v}$.  This leads to the main theorem of
this subsection.
\begin{theorem}
  \label{thm:small_simulation}
  For a formula $φ ∈ L_μ$, a structure $M$ and $X ⊆ V(M)$, the game
  $P^φ$ simulates $\MCgame{M}{φ}$.
\end{theorem}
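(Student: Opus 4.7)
The plan is to build a simulation map $f\colon V(\MCgame{M}{\varphi}) \to V(P^\varphi)$ and prove $\strategytype{P^\varphi}{f(v)} = \strategytype{\MCgame{M}{\varphi}}{v}$ for every $v$ by translating partial winning strategies in both directions. Set $f(u) = u$ for each interface node $u$ and $f(v) = \strategytype{\MCgame{M}{\varphi}}{v} \in V_1$ otherwise. The interface case reduces to the non-interface case through $E_4$, which sends every $u \in V_4$ to its type in $V_1$, so the theorem boils down to the identity $\strategytype{P^\varphi}{T} = T$ for every realized type $T \in V_1$.

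For the inclusion $T \subseteq \strategytype{P^\varphi}{T}$, fix $y \in T$ and build a positional partial strategy $\pi'$ for \peven in $P^\varphi$ from $T$: it plays $y$ at the node $T$, plays some fixed element $y_{T'} \in T'$ at every other $V_1$-node $T'$ reached during the play, and is undefined at every $V_\diamond$-interface node so that \peven exits there. Tracing a typical play, \peven's choice of $y$ at $T$ is followed by \podd selecting a target $((x_i,\ind{\psi}),p) \in y$ at the $V_2$-node; the deterministic $V_3$-layer then forces the path to $(x_i,\ind{\psi}) \in V_4$ with minimum priority exactly $p$, since the priorities $p'$ carried on $V_1 \cup V_2 \cup V_4$ are neutral. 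If $\ind{\psi}$ starts with $\diamond$ the play exits there contributing $((x_i,\ind{\psi}),p)$ to the preprofile; if it starts with $\Box$ the play traverses $E_4$ to $\strategytype{\MCgame{M}{\varphi}}{(x_i,\ind{\psi})} \in V_1$ and the construction recurses. Since infinite plays in $P^\varphi$ project in the obvious way onto infinite plays in $\MCgame{M}{\varphi}$ with matching non-neutral priorities, the parity condition is inherited from the winning strategies witnessing each $y_{T'} \in T'$.

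For the reverse inclusion $\strategytype{P^\varphi}{T} \subseteq T$, take a $\sqsubseteq$-minimal winning profile $y'$ on $(P^\varphi, T)$, witnessed by a positional partial winning strategy $\pi'$ (using \cref{lem:positional_partial_strategies_exist}); let $y \in T$ be the profile that $\pi'$ plays at $T$. The rigid $V_2$-$V_3$-$V_4$ funnel forces every preprofile entry arising on a play that exits within the first $E_4$-crossing to lie in $y$ with the correct priority. To handle plays that continue through $\Box$-interface nodes into further $V_1$-nodes, I translate $\pi'$ back into a partial strategy for $\MCgame{M}{\varphi}$ that, whenever the play reaches a node $w$ of $\MCgame{M}{\varphi}$ whose type is $T''$, makes the move $\pi'$ makes at $T''$. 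One verifies that this translated strategy is well-defined and realizes profile $y'$ in the original game; hence $y'$ lies in the type of some node of $\MCgame{M}{\varphi}$ whose type is $T$, so by $\sqsubseteq$-minimality $y' \in T$.

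The hard part is the interaction between $\Box$-interface nodes and $E_4$: a single play in $P^\varphi$ can thread through many $E_4$-crossings, stacking priority contributions from several $V_3$-segments before possibly looping or exiting, so I must confirm carefully that the profile and parity accounting match across these crossings. The verification ultimately rests on the observation that along any play the sequence of non-neutral priorities visited in $P^\varphi$ is in bijection with the priorities visited on the corresponding play in $\MCgame{M}{\varphi}$, so infinite plays win in one game if and only if they win in the other, and neither player can improve on the profile encoded in $T$.
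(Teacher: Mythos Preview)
Your overall architecture matches the paper's: you use the same simulation map $f$ (identity on the interface, $v\mapsto\strategytype{\MCgame{M}{\varphi}}{v}$ elsewhere) and argue the type equality by translating partial winning strategies in both directions. Where the two diverge is in how the translations are anchored. The paper fixes a \emph{single} positional partial winning strategy $\pi$ on $\MCgame{M}{\varphi}$ and defines $\pi'$ on $P^\varphi$ by $\pi'(T')=\strategyprofile(\pi,(w,\chi))$ for a node $(w,\chi)$ of type $T'$; for the reverse direction it explicitly invokes \cref{thm:definable_profiles} (``by \cref{thm:definable_profiles} and some technical work'') to produce the strategy on $\MCgame{M}{\varphi}$. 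You bypass \cref{thm:definable_profiles} entirely and attempt a direct combinatorial translation.

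That shortcut leaves two concrete gaps. First, in your forward direction you let \peven play ``some fixed element $y_{T'}\in T'$'' at every $V_1$-node other than $T$. This choice is uncoordinated with the witness for $y\in T$: once a play passes through a $\Box$-interface node $u$ with $(u,p)\in y$ and re-enters $V_1$ at $T'=\strategytype{\MCgame{M}{\varphi}}{u}$, the subsequent preprofile contributions come from your arbitrary $y_{T'}$ and need not be dominated by $y$. The paper's device of reading all $V_1$-moves off one strategy $\pi$ is exactly what keeps these later contributions inside $\strategyprofile(\pi,(v,\psi))=y$. Second, your appeal to an ``obvious projection'' of infinite $P^\varphi$-plays onto $\MCgame{M}{\varphi}$-plays is not well defined: $V_1$-nodes are \emph{types}, not nodes of $\MCgame{M}{\varphi}$, so there is no canonical node to project to. Similarly, in your backward translation, ``make the move $\pi'$ makes at $T''$'' is not a move in $\MCgame{M}{\varphi}$ at all—$\pi'$ at $T''$ selects a profile in $V_2$. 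What is really needed there is, for each such profile, a concrete strategy on $\MCgame{M}{\varphi}$ realizing it from the current node; this is precisely the content of \cref{thm:definable_profiles} (via \cref{cor:type_determined_by_type}), which the paper invokes and you omit.
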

\begin{proof}
  For every node $u ∈ X \times \CL(φ)$, define $f(u) = u$.  For the
  remaining nodes $v ∈ V(\MCgame{M}{φ}) \setminus (X \times \CL(φ))$,
  define $f(v) = \strategytype{\MCgame{M}{φ}}{v} ∈ V_◊(P^φ)$.

  All we have to do now is to show that
  $\strategytype{\MCgame{M}{φ}}{v} = \strategytype{P^φ}{f(v)}$ for all
  $v ∈ V(\MCgame{M}{φ})$.  First we show $⊆$.

  Let $\pi$ be a positional partial winning strategy for
  $(\MCgame{M}{φ},(v,\ind{ψ}))$.  We want to construct a positional
  partial winning strategy $\pi'$ for $(P^φ,f((v,\ind{ψ})))$ such that
  $\strategyprofile(\pi,(v,\ind{ψ})) =
  \strategyprofile(\pi',f((v,\ind{ψ})))$.

  For every node $(v,\ind{ψ}) ∈ \MCgame{M}{φ}$, define
  \[ \pi'(\strategytype{\MCgame{M}{φ}}{(v,\ind{ψ})}) \coloneq
  \strategyprofile(\pi,(v,\ind{ψ})). \] For $(x_i,\ind{ψ}) ∈
  V_◊(P^φ)$, if $(x_i,\ind{ψ}) ∈ \dom(\pi)$, then we define
  $\pi'((x_i,\ind{ψ})) = \strategytype{\MCgame{M}{φ}}{x_i}$.
  Otherwise, leave $\pi'((x_i,\ind{ψ}))$ undefined.

  We claim that $\pi'$ is a partial winning strategy on
  $(P^φ,(v,\ind{ψ}))$.  By \cref{thm:definable_profiles}, for all
  $(x_i,\ind{\chi}) ∈ X \times \CL(φ)$ it holds that $M,x_i ⊧
  \defineprofile{\ind{\chi}}{\strategyprofile(\pi,(x_i,\ind{\chi}))}$.
  So the unique edge leaving from $(x_i,\ind{ψ})$ in $P^φ$ goes to
  some node $y$ with $\strategyprofile(\pi,(x_i,\ind{ψ})) ∈ y$.

  Inductively it follows that every $\pi'$-conforming path in $P^φ$
  corresponds to a $\pi$-conforming path in $\MCgame{M}{φ}$ and vice
  versa.  So $\pi'$ is a partial winning strategy with
  $\strategyprofile(\pi,(v,\ind{ψ})) =
  \strategyprofile(\pi',f((v,\ind{ψ})))$.

  It remains to show the other direction
  $\strategytype{\MCgame{M}{φ}}{v} \supseteq
  \strategytype{P^φ}{f(v)}$.

  Let $\pi'$ be a positional partial winning strategy for
  $(P^φ,f((v,\ind{ψ})))$.  We want to construct a partial winning
  strategy $\pi$ for $(\MCgame{M}{φ},(v,\ind{ψ}))$ such that
  $\strategyprofile(\pi,\penalty0(v,\ind{ψ})) =
  \strategyprofile(\pi',f((v,\ind{ψ})))$.

  By \cref{thm:definable_profiles} and some technical work, we can
  show that there is a $\pi$ such that
  $\strategyprofile(\pi,(v,\ind{ψ})) \sqsubseteq
  \strategyprofile(\pi',f((v,\ind{ψ})))$.  As we saw when proving the
  other direction, we can construct from $\pi$ a partial winning
  strategy $\pi''$ for $(P^φ,f((v,\ind{ψ})))$ such that
  $\strategyprofile(\pi,(v,\ind{ψ})) =
  \strategyprofile(\pi'',f((v,\ind{ψ})))$.  From the definition of
  $\strategytype{}{}$ it follows that
  $\strategyprofile(\pi,(v,\ind{ψ})) =
  \strategyprofile(\pi',(v,\ind{ψ}))$.
\end{proof}

\subsection{Proof of the Decomposition Theorem}

With \cref{thm:small_simulation}, we finally have the necessary tool
to conclude the proof of the decomposition theorems from
\cpageref{thm:types_suffice,thm:computing_types}.

\smallskip
\begin{proof}[of \cref{thm:types_suffice}]
  Fix some $φ ∈ \CL_P(L)$.  Consider the model checking game
  $\MCgame{M}{φ}$ and the induced subgame $\MCgame{M_2}{φ}$ with
  interface $U$.  We can assume that $V(\MCgame{M_2}{φ}) \cap
  V(\MCgame{M}{φ}) = U$ by duplicating some nodes as necessary.

  The game $\MCgame{M_2}{φ}$ is simulated by $P^φ$, constructed as
  described in \cref{thm:small_simulation}.  By
  \cref{lem:simulation_lemma}, we can replace $\MCgame{M_2}{φ}$ by
  $P^φ$ (by properly adapting the edges) without changing the winner
  on $(v,φ)$.  Since the construction of $P^φ$ only depends on the
  types of the nodes in $X$, we will get the same game $P^φ$ if we
  start the construction with $M_2'$.

  Let $(v,w)$ be an edge from $M_1 \setminus X$ to $M_2 \setminus X$
  and let $w' ∈ M_2'$ be the node chosen as the replacement for $w$.
  Because $\ctypeWithIndex{\set{φ}}{\seq P}{M_2}{w}{\seq X}$
  determines $\strategytype{\MCgame{M_2}{φ}}{(w,φ)}$ by
  \cref{cor:type_determined_by_type} and we have
  \[ \ctypeWithIndex{\set{φ}}{\seq P}{M_2}{w}{\seq X} \subseteq
  \ctypeWithIndex{L}{\seq P}{M_2}{w}{\seq X}, \] it follows that
  \[ \strategytype{\MCgame{M_2}{φ}}{(w,φ)} =
  \strategytype{\MCgame{M_2'}{φ}}{(w',φ)}. \] So in the simulation,
  the edge will point to the same node no matter if we started with
  $M_2$ or $M_2'$.
\end{proof}

\begin{proof}[of \cref{thm:computing_types}]
  The first part is essentially a different way of stating
  \cref{thm:types_suffice} which follows immediately with the same
  argument as in the previous proof.

  Note that we may assume without loss of generality that $X \cap Y =
  \emptyset$.  If this is not the case, then we have $x_i = y_j$ for
  some $x_i ∈ X, y_j ∈ Y$ and the propositional variables $X_i ∈ P$
  and $Y_j ∈ Q$ will be interchangeable.

  Set $L' \coloneq \CL_Q(L)$.  \Cref{thm:types_suffice} states that
  $\ctypeWithIndex{L'}{\emptyset}{M}{v}{\emptyset}$ is invariant under
  $\CL_\emptyset(L')$-equivalent directed separations for all $v ∈
  M_1$.  All we need to show is that the requirements listed in
  \cref{thm:computing_types} specify the directed separation
  $(M_1,M_2)$ up to $\CL_\emptyset(L')$-equivalence.

  For all nodes $w ∈ M_2$, the set $\ctypeWithIndex{L'}{\seq
    P}{M_2}{w}{\seq X}$ can be computed from $\ctypeWithIndex{L}{\seq
    P}{M_2}{w}{\seq X}$; a propositional variable $Y_i ∈ Q$
  corresponding to a node $y_i ∈ Y$ is always false in $M_2$.  From
  this we can easily compute
  $\ctypeWithIndex{L'}{\emptyset}{M_2}{w}{\emptyset}$ by forgetting
  about $\seq P$.

  The computability in the above argument follows from the observation
  that all sets involved are finite in size and the model checking for
  $L_μ$ is decidable.

  For the second part, let $φ ∈ \CL_Q(L)$.  We want to decide whether
  $M,w ⊧ φ$.  By the first part, we already know the sets
  $\ctypeWithIndex{L}{\seq Q}{M}{x_i}{\seq Y}$ for all $x_i ∈ X$.
  Consider the model checking game $\MCgame{M}{φ}$.  In this game, the
  nodes of the form $(v,Y_i)$ with $v ∈ M_1$ are always losing because
  $Y_i ∈ Q$ is never true in $M_2$.  It follows that the subgame
  $\MCgame{M_2}{φ}$ is isomorphic to $\MCgame{M_2}{φ'}$, where $φ'$ is
  constructed from $φ$ by replacing all $Y_i ∈ Q$ by $\bot$.  Note
  that $φ' ∈ \CL_P(L)$, so we know all optimal partial strategies for
  $(\MCgame{M_2}{φ'},(w,φ'))$ because we know $\ctypeWithIndex{L}{\seq
    P}{M_2}{w}{\seq X}$.  It follows that the winner is determined by
  the remaining sets given in the theorem.
\end{proof}


\section{FPT Algorithms for \texorpdfstring{$L_\mu$}{L\_mu} Model
  Checking}
\label{sec:model-checking}

In this section we derive two algorithmic applications of
\cref{thm:computing_types}.  More precisely, we show that
$L_\mu$-model-checking is fixed-parameter tractable on any class of
structures of bounded Kelly-width or bounded DAG-width.

Before proving our results, we develop some algorithmic concepts
common to both proofs.  We first need an algorithmic version of
$L$-equivalence.

In the following, let $\sigma$ be a signature, $\seq P$ be a sequence
of propositional symbols of the appropriate length disjoint from
$\sigma$ and let $L ⊆ L_μ[\sigma \cup P]$.

\subsection{Weak Separations}

\begin{definition}\label{def:weak-dir-separation}
  Let $M$ be a $\sigma$-structure.  A pair $(M_1,M_2)$ of induced
  substructures is a \emph{weak} directed $\sigma$-separation of $M$
  with interface $\seq X = (x_1,\ldots,x_k)$ if
  \begin{itemize}
  \item $V(M) = V(M_1) \cup V(M_2)$,
  \item $X = \set{x_1,\ldots,x_k} ⊆ V(M_1) \cap V(M_2)$,
  \item there are no edges from $M_2 \setminus (V(M_1) \cap
    V(M_2))$ to $M_1 \setminus (V(M_1) \cap V(M_2))$,
  \item there are no edges from $(V(M_1) \cap V(M_2)) \setminus X$ to
    $V(M_1) \setminus V(M_2)$.
  \end{itemize}
\end{definition}
Clearly, every directed separation is a weak directed separation.
Weak separations can be transformed into proper separations by
duplicating the nodes outside of the interface $X$.  This gives us the
following theorem.

\begin{theorem}\label{thm:weak-sep-avoidable}
  Let $(M_1,M_2)$ be a weak directed separation of $M$ with interface
  $\seq X$.  Then there exists a structure $M'$ and a directed
  separation $(M_1',M_2')$ of $M'$ with the same interface $\seq X$
  and isomorphisms $\pi_1: M_1 \to M_1'$, $\pi_2: M_2 \to M_2'$ which
  are the identity on $\seq X$ such that
  \begin{align*}
    \ctypeWithIndex{L}{\seq P}{M}{v}{\seq X} =
    \ctypeWithIndex{L}{\seq P}{M'}{\pi_i(v)}{\seq X}
  \end{align*}
  for all $i ∈ \set{1,2}$ and $v ∈ V(M_i)$.
\end{theorem}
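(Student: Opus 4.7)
The plan is to build $M'$ by duplicating each node of $Y := (V(M_1) \cap V(M_2)) \setminus X$, keeping one copy inside $M_1'$ and pushing a fresh copy into $M_2'$; afterwards I would show that $M$ and $M'$ are bisimilar with respect to the $\seq P$-coloured signature, and then invoke the bisimulation-invariance of $L_\mu$ to conclude that the $(L,\seq P)$-types agree.

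Concretely, for each $y \in Y$ introduce a fresh copy $y'$ and let $Y' := \{y' : y \in Y\}$. Set $V(M_1') := V(M_1)$, $V(M_2') := (V(M_2) \setminus Y) \cup Y'$, and $V(M') := V(M_1') \cup V(M_2')$, so that $V(M_1') \cap V(M_2') = X$. Take $\pi_1$ to be the identity on $V(M_1)$ and let $\pi_2$ send $y \in Y$ to $y'$ and act as the identity on $V(M_2) \setminus Y$; transfer all $\sigma$-labels along $\pi_1, \pi_2$. Declare the edges of $M'$ to be $\pi_1(E(M_1)) \cup \pi_2(E(M_2))$ together with two kinds of cross edges: (a)~every $(u,v) \in E(M)$ with $u \in V(M_1) \setminus V(M_2)$ and $v \in V(M_2) \setminus V(M_1)$; and (b)~for every $(y,v) \in E(M)$ with $y \in Y$ and $v \in V(M_2) \setminus V(M_1)$, an edge from $y \in V(M_1')$ to $v \in V(M_2')$. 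Every cross edge has tail in $V(M_1') \setminus X$ and head in $V(M_2') \setminus X$, so the induced subgraph of $M'$ on $V(M_i')$ is exactly $\pi_i(M_i)$; hence $\pi_i$ is an isomorphism and $(M_1', M_2')$ is a directed separation with interface $\seq X$.

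Now consider the relation $R \subseteq V(M) \times V(M')$ given by $R(v,v)$ for $v \in V(M) \setminus Y$, together with both $R(y,y)$ and $R(y,y')$ for $y \in Y$. Propositional labels match because $y$ and its duplicate $y'$ carry identical $\sigma$-labels and neither lies in $X$, so no $P_i$ fires. The delicate case of the forth condition is $R(y,y)$ with $y \in Y$: the fourth clause of \cref{def:weak-dir-separation} forces every outgoing edge of $y$ in $M$ to land in $V(M_2)$, and the type-(b) cross edges were added precisely so that $y$, viewed as a node of $V(M_1')$, sees each such target (directly inside $M_1$ when it lies in $V(M_1)$, and via the new cross edge when it lies in $V(M_2) \setminus V(M_1)$). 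The case $R(y,y')$ is immediate from $\pi_2$ being an isomorphism, and the back condition is symmetric. The main obstacle is exactly this forth condition for $R(y,y)$: without the fourth clause of \cref{def:weak-dir-separation}, an edge from $y$ to some $z \in V(M_1) \setminus V(M_2)$ would have no counterpart at $y$ inside $V(M_1')$, since such an edge is absent from $M_1$ and cannot be supplied as a cross edge without breaking the directed separation. The asymmetric duplication together with the type-(b) cross edges is the device that converts condition~(4) of the weak separation into a genuine bisimulation, from which the desired equality of $(L,\seq P)$-types follows.
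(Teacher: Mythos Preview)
Your proposal is correct and follows essentially the same route as the paper: build $M'$ by duplicating the nodes in the overlap outside $X$, check that $(M_1',M_2')$ is a genuine directed separation, exhibit a bisimulation between $\colored{M}{\seq P}{\seq X}$ and $\colored{M'}{\seq P}{\seq X}$, and conclude via bisimulation-invariance of $L_\mu$. The only cosmetic difference is that the paper duplicates \emph{every} node outside $X$ symmetrically via tags $(1,v)$ and $(2,v)$, whereas you duplicate only the nodes of $Y=(V(M_1)\cap V(M_2))\setminus X$ and keep $M_1$ fixed; your version is a touch more economical, but the argument is the same.
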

\begin{proof}
  For $i ∈ \set{1,2}$, define $\pi_i$ and $M'$ as
  \begin{align*}
    \pi_i(v) &\coloneq
    \begin{cases}
      v & \text{if $v ∈ X$}\\
      (i,v) & \text{if $v ∉ X$}
    \end{cases}\\
    V(M') &\coloneq \pi_1(V(M_1)) \cup \pi_2(V(M_2))\\
    E(M') &\coloneq E_1 \cup E_2 \cup E_3,
  \end{align*}
  where, for $i ∈ \set{1,2}$,
  \begin{align*}
    E_i &\coloneq \{(\pi_i(v),\pi_i(w)) \mid (v,w) ∈ E(M_i) \}\\
    E_3 &\coloneq \{(\pi_1(v),\pi_2(w)) \mid {}\\
    &\qquad (v,w) ∈ E(M) \cap (V(M_1) \times V(M_2)) \}.
  \end{align*}
  The substructures $M_1'$, $M_2'$ of $M'$ are induced by the sets
  \begin{align*}
    V(M_i') &\coloneq \pi_i(V(M_i)).
  \end{align*}
  Clearly, $\pi_i$ is an isomorphism between $M_i$ and $M_i'$ and the
  identity on $\seq X$.  We also have that $(M_1',M_2')$ is a directed
  separation of $M'$.

  It is easy to verify that the colored structures $\colored{M}{\seq
    P}{\seq X}$ and $\colored{M'}{\seq P}{\seq X}$ are bisimilar.
  Bisimilarity of these structures implies
  \begin{align*}
    \ctypeWithIndex{L}{\seq P}{M}{v}{\seq X} =
    \ctypeWithIndex{L}{\seq P}{M'}{\pi_i(v)}{\seq X}
  \end{align*}
  for all $i ∈ \set{1,2}$ and $v ∈ V(M_i)$.
\end{proof}

Having isomorphisms means that
\begin{align*}
  \ctypeWithIndex{L}{\seq P}{M_i}{v}{\seq X} =
  \ctypeWithIndex{L}{\seq P}{M_i'}{\pi_i(v)}{\seq X}
\end{align*}
for all $v ∈ V(M_i)$.

This and the previous theorem imply that \cref{thm:types_suffice} and
with appropriate wording also \cref{thm:computing_types} hold for weak
directed separations as well.  Let us restate the last theorem in its
more general form.

\begin{theorem}[Corollary of
  \cref{thm:computing_types,thm:weak-sep-avoidable}]
  \label{thm:computing_types_weak}
  Let $\seq P$, $\seq Q$ be sequences of proposition symbols such that
  $\sigma \cap P = \sigma \cap Q = P \cap Q = \emptyset$.

  Let $L ⊆ L_μ[\sigma \cup P]$ and $M$ be a structure with a weak
  directed $\sigma$-separation $(M_1,M_2)$ with interface $\seq X$.
  Let $\seq Y ∈ ((V(M_1) \setminus V(M_2)) \cup X)^{\abs{Q}}$ be a
  tuple.

  For all $v ∈ M_1$, the set $\ctypeWithIndex{L}{\seq Q}{M}{v}{\seq
    Y}$ depends only on
  \begin{itemize}
  \item $M_1$ and $\seq Q$ and
  \item $\set[{(x_i,\ctypeWithIndex{L}{\seq P}{M_2}{x_i}{\seq
        X})}]{x_i ∈ X}$ and
  \item $\set{ (v,\ctypeWithIndex{L}{\seq P}{M_2}{w}{\seq X}) \mid
      \text{$(v,w) ∈ E(M) \cap (M_1 \times M_2)$} }$.
  \end{itemize}
  Provided $L$ is finite, $\ctypeWithIndex{L}{\seq Q}{M}{v}{\seq Y}$
  can be computed from these sets.

  Furthermore, for every $w ∈ M_2$, the set $\ctypeWithIndex{L}{\seq
    Q}{M}{w}{\seq Y}$ depends only on the above sets and on
  $\ctypeWithIndex{L}{\seq P}{M_2}{w}{\seq X}$ and can be computed
  from these sets if $L$ is finite.
\end{theorem}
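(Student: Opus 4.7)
The plan is to prove \cref{thm:computing_types_weak} by reducing the weak-separation case to the already-established proper-separation case (\cref{thm:computing_types}) via \cref{thm:weak-sep-avoidable}. Concretely, apply \cref{thm:weak-sep-avoidable} to the weak directed separation $(M_1,M_2)$ with interface $\seq X$, obtaining a structure $M'$, a proper directed separation $(M_1',M_2')$ with the same interface $\seq X$, and isomorphisms $\pi_1\colon M_1 \to M_1'$, $\pi_2\colon M_2 \to M_2'$ that are the identity on $\seq X$. The conclusion of \cref{thm:weak-sep-avoidable} gives the crucial type-preservation equalities $\ctypeWithIndex{L}{\seq P}{M}{v}{\seq X} = \ctypeWithIndex{L}{\seq P}{M'}{\pi_i(v)}{\seq X}$, and since $\pi_i$ is an isomorphism it also preserves $L$-types when we restrict to the substructures $M_i$ and $M_i'$.

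Next I would verify that the tuple $\seq Y$ transfers cleanly. By assumption, every entry of $\seq Y$ lies in $(V(M_1)\setminus V(M_2)) \cup X$, so $\pi_1(\seq Y)$ is well-defined and lies in $V(M_1')$; in fact $\pi_1(\seq Y)$ agrees with $\seq Y$ on interface entries. Applying \cref{thm:computing_types} to the proper separation $(M_1',M_2')$ with the tuple $\pi_1(\seq Y)$, the set $\ctypeWithIndex{L}{\seq Q}{M'}{\pi_1(v)}{\pi_1(\seq Y)}$ depends only on $M_1'$, $\seq Q$, the types at interface nodes inside $M_2'$, and the set of crossing edges together with the types of their $M_2'$-endpoints. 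By type-preservation and the fact that $\pi_1,\pi_2$ are identity on $\seq X$, each of these three pieces of data for $(M_1',M_2')$ is determined by (and in the finite case computable from) the corresponding data for $(M_1,M_2)$ listed in the statement: $M_1'$ is isomorphic to $M_1$; the interface types $\ctypeWithIndex{L}{\seq P}{M_2'}{x_i}{\seq X}$ equal $\ctypeWithIndex{L}{\seq P}{M_2}{x_i}{\seq X}$; and the crossing edges in $M'$ are in bijection (via $(\pi_1,\pi_2)$) with the crossing edges of $M$, with the $\pi_2$-image of each right endpoint having the same $M_2$-type as the original.

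Finally I would pull the conclusion back through the type-preservation equality once more: on the left side we have $\ctypeWithIndex{L}{\seq Q}{M}{v}{\seq Y} = \ctypeWithIndex{L}{\seq Q}{M'}{\pi_1(v)}{\pi_1(\seq Y)}$ for $v \in M_1$, and analogously $\ctypeWithIndex{L}{\seq Q}{M}{w}{\seq Y} = \ctypeWithIndex{L}{\seq Q}{M'}{\pi_2(w)}{\pi_1(\seq Y)}$ for $w \in M_2$; this yields the two dependency statements of the theorem. For the ``furthermore'' clause about nodes $w \in M_2$, one additionally notes that $\ctypeWithIndex{L}{\seq P}{M_2}{w}{\seq X}$ is preserved under $\pi_2$, so it plays the same role for $(M_1',M_2')$ as it did in \cref{thm:computing_types}. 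Computability when $L$ is finite is inherited from \cref{thm:computing_types} together with the constructive nature of the reduction in \cref{thm:weak-sep-avoidable}.

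The only mildly delicate point—and the step I would write out most carefully—is the hypothesis $\seq Y \in ((V(M_1)\setminus V(M_2)) \cup X)^{|Q|}$: entries of $\seq Y$ lying in $V(M_1)\cap V(M_2)\setminus X$ would be split by $\pi_1$ and $\pi_2$ into two different nodes of $M'$, so the type of $v$ relative to such a $\seq Y$ in $M$ need not coincide with its type relative to $\pi_1(\seq Y)$ in $M'$ (the proposition indicating that entry would be set on only one of the two copies). This is precisely why the statement restricts $\seq Y$ to the ``safe'' set; once this restriction is observed, the bisimulation argument underlying \cref{thm:weak-sep-avoidable} guarantees that $\pi_1,\pi_2$ map $\seq Y$-indexed types bijectively, and the reduction to \cref{thm:computing_types} goes through without further difficulty.
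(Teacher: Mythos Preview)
Your proposal is correct and follows exactly the approach the paper indicates: the paper does not write out a separate proof for \cref{thm:computing_types_weak} but simply presents it as a corollary of \cref{thm:computing_types} and \cref{thm:weak-sep-avoidable}, noting afterwards that the restriction $\seq Y \in ((V(M_1)\setminus V(M_2)) \cup X)^{|Q|}$ is needed precisely because otherwise the node would be split into two copies in $M'$ and the $M_2$-types relative to $\seq X$ would not carry the required color information. Your write-up is more detailed than the paper's, but the reduction via the bisimulation underlying \cref{thm:weak-sep-avoidable} and the subsequent application of \cref{thm:computing_types} is the same argument.
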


The only difference of this statement to \cref{thm:computing_types} is
that we only require a weak separation and that the tuple $\seq Y$
should not contain a node $v ∈ V(M_1) \cap V(M_2)$ which is not part
of the interface.  This last requirement is necessary because
otherwise we would have a color in $M_2$ where there was none before,
and the types of $M_2$ with respect to $\seq X$ do not carry this
information.

\subsection{Kelly-Width}

First we consider Kelly-width.  We follow the notation and definitions
given in~\cite{hunter2008}.  For a directed acyclic graph (DAG), we
write $\preceq$ for the reflexive and transitive closure of the edge
relation.

Let $G$ be a digraph.  A set $W ⊆ V(G)$ \emph{guards} $X ⊆ V(G)$ if $W
\cap X = \emptyset$ and for all $(u,v) ∈ E(G)$ with $u ∈ X$, we have
$v ∈ X \cup W$. For any set $W\subseteq V(G)$ we write $\guard(W)$ for
the minimal set $U\subseteq V(G)$ guarding $W$.

\begin{definition}
  A \emph{Kelly decomposition} of a digraph $G$ is a triple $\DD
  \coloneq (D, \beta, \gamma)$, where $\beta, \gamma \st V(D)
  \rightarrow 2^{V(G)}$ such that
  \begin{itemize}
  \item $D$ is a DAG and $(\beta(t))_{t∈V(D)}$ partitions $V(G)$,
  \item for all $t ∈ V(D)$, $\gamma(t)$ guards $\Bdown_t \coloneq
    \bigcup_{t' \succeq t} \beta(t')$ and
  \item for all $s ∈ V(D)$ there is a linear order $\leq_t$ on its
    children so that the children can be ordered as $t_1, \ldots, t_p$
    such that for all $1 \leq i \leq p$, $\gamma(t_i) ⊆ \beta(s) \cup
    \gamma(s) \cup \bigcup_{j<i} \Bdown_{t_j}$.  Similarly, there is a
    linear order on the roots such that $\gamma(r_i) ⊆ \bigcup_{j<i}
    \Bdown_{r_j}$.
  \end{itemize}

  The \emph{width} of $\DD$ is $\max\set[\abs{\beta(t) \cup
    \gamma(t)}]{t ∈ V(D)}$.  The \emph{Kelly-width} of $G$ is the
  minimal width of any of its Kelly decompositions.
\end{definition}

\begin{theorem}\label{thm:lmu_fpt_on_bounded_kelly_width}
  There exists an algorithm that solves the $L_μ$ model checking
  problem in time $O(f(k + \abs{φ}) \cdot n^c)$ for some computable
  function $f$ and some constant $c$, where $k$ is the Kelly-width and
  $n$ the size of the input structure, provided a Kelly decomposition
  of width at most $k$ is given as part of the input.
\end{theorem}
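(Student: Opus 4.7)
The plan is to perform bottom-up dynamic programming on the Kelly decomposition $\DD = (D,\beta,\gamma)$, using \cref{thm:computing_types_weak} at each step to combine type information. Fix $L = \set{\varphi}$ and let $\delta$ be the $\mu$-depth of $\varphi$. Because $\abs{\gamma(t)} \leq k$ for every $t$, the set $\CL_P(L)$ (for a sequence $\seq P$ of length at most $k$) is finite with size bounded by a computable function of $k+\abs{\varphi}$, hence so is the number of realisable $(L,\seq P)$-types. This is the source of the parameter dependence $f(k+\abs{\varphi})$ in the runtime.

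For each $t \in V(D)$, set $M^t_2 \coloneq M[\Bdown_t \cup \gamma(t)]$ and $M^t_1 \coloneq M[(V(M)\setminus\Bdown_t) \cup \gamma(t)]$. The guard property forbids edges from $\Bdown_t$ to $V(M) \setminus (\Bdown_t \cup \gamma(t))$, so $(M^t_1, M^t_2)$ is a weak directed separation of $M$ with interface $\gamma(t)$, which is exactly the setup to which \cref{thm:computing_types_weak} applies. The DP invariant is that, after processing $t$, the algorithm has stored for each $v \in \Bdown_t$ its $(L,\seq P_t)$-type $\tau_t(v) \coloneq \ctypeWithIndex{L}{\seq P_t}{M^t_2}{v}{\gamma(t)}$, where $\seq P_t$ is an indexing of $\gamma(t)$ by fresh propositions.

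The nodes of $D$ are processed in reverse topological order. For a leaf $t$, $\Bdown_t = \beta(t)$ has size at most $k$, so $M^t_2$ has at most $2k$ vertices and $\tau_t$ can be computed by direct $L_\mu$ model checking in time $f(k + \abs{\varphi})$. For an internal node $t$ with children $t_1,\ldots,t_p$ in the linear order provided by the Kelly decomposition, the ordering property $\gamma(t_i) \subseteq \beta(t) \cup \gamma(t) \cup \bigcup_{j<i} \Bdown_{t_j}$ lets the algorithm attach the subtrees one at a time: starting from the small structure on $\beta(t) \cup \gamma(t)$, at step $i$ it glues in $\Bdown_{t_i}$ along the interface $\gamma(t_i)$, which is already part of the current accumulation. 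The inductively stored profile $\tau_{t_i}$ supplies precisely the type information at $\gamma(t_i)$ and at the edge-targets crossing into $\Bdown_{t_i}$ required by \cref{thm:computing_types_weak}, so one application of that theorem per step yields the updated types with respect to the coloring of $\gamma(t)$. After all children are integrated we obtain $\tau_t$. The linear order on the roots of $D$ is handled analogously, treating the roots as children of a fictitious top-level node with empty $\beta$ and $\gamma$; the final accumulation contains all of $V(M)$, and $M,v \models \varphi$ is then read off directly from $\tau(v)$.

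The main technical obstacle I anticipate is coloring bookkeeping: the child profile $\tau_{t_i}$ is stated relative to $\seq P_{t_i}$, while the parent computation wants types relative to $\seq P_t$, and the two guards typically overlap only partially. Here \cref{thm:computing_types_weak} is stated explicitly with distinct coloring sequences $\seq P$ and $\seq Q$ on the two sides of the separation, so the combination is well-defined, but verifying its hypotheses at each incremental attachment---in particular that no edge escapes the currently exposed interface---requires careful accounting of where each guard node currently resides (inside $\beta(t)$, inside $\gamma(t)$, or inside a previously processed sibling's body). Once this accounting is in place, each of the $O(n)$ combination steps reduces to a table lookup of size $f(k+\abs{\varphi})$, giving the claimed total runtime of $O(f(k+\abs{\varphi})\cdot n^c)$.
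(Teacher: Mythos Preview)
Your proposal is correct and follows essentially the same approach as the paper: bottom-up dynamic programming along the Kelly decomposition, maintaining for each $t$ the $(L,\seq P)$-types in $G[\Bdown_t\cup\gamma(t)]$ relative to the guard $\gamma(t)$, and invoking \cref{thm:computing_types_weak} to absorb the children one at a time in the order supplied by the Kelly axioms. The only differences are cosmetic bookkeeping---the paper accumulates the children first (tracking types relative to an auxiliary interface $\delta_i\subseteq\beta(t)\cup\gamma(t)$) and adjoins $\beta(t)\cup\gamma(t)$ at the end, whereas you start from $\beta(t)\cup\gamma(t)$---and the paper is more honest that each combination step costs polynomial in $n$ (it obtains $c=3$), not a pure $f(k+|\varphi|)$-sized table lookup as you suggest; your stored invariant should also include the types at the guard nodes $\gamma(t)$, not just at $\Bdown_t$, since these are needed as the interface data in the next application of the theorem.
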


Let $G$ be a structure of Kelly-width $k$ and $v\in V(G)$. It is
easily seen that, by increasing the Kelly-width by one, we can always
take a Kelly decomposition of $G$ of width $\leq k+1$ which has only
one root and this root contains $v$. We call such a Kelly
decomposition \emph{rooted at $v$}.

\smallskip
\begin{proof}[of \cref{thm:lmu_fpt_on_bounded_kelly_width}]
  Let $G,v$ be a structure and $\seq P$ be a sequence of $k$ fresh
  proposition symbols.  We pick an arbitrary linear order of $V(G)$ in
  order to define interfaces consistently.

  Let $\DD = (D,\beta,\gamma)$ be a Kelly decomposition of width $k$
  of $G$ rooted at $v$ and $\phi\in L_\mu$.  We set $L \coloneq \{
  \phi \}$.

  Let us introduce the abbreviation
  \begin{align*}
    \TTT(A,B) \coloneq \set{ (v,\ctypeWithIndex{L}{\seq P}{A}{v}{B})
      \mid v ∈ A}.
  \end{align*}

  We will inductively compute the types \[ \TTT(\Bdown_t \cup
  \gamma(t),\gamma(t)) \] for all $t ∈ V(D)$.  For the leaves, these
  sets can be computed by brute force.  Let $t ∈ V(D)$ be a node with
  children $s_1,\ldots,s_l$ and assume that we already know the above
  types for all $s_i$.

  Let
  \begin{align*}
    \delta_i &\coloneq \bigcup_{j \leq i} (\gamma(s_j) \cap (\beta(t)
    \cup \gamma(t)))\\
    \delta_i' &\coloneq \delta_i \cup \bigcup_{j \leq i} \Bdown_{s_j}.
  \end{align*}
  We inductively compute the types $\TTT(\delta_i',\delta_i)$.  For
  $i=1$ we already know these types by assumption.  Assume $i>1$.

  We want to construct weak directed separations.  Note that by
  assumption we know $\TTT(\Bdown_{s_i} \cup
  \gamma(s_i),\gamma(s_i))$. We now first compute \[ \TTT(\Bdown_{s_i}
  \cup \gamma(s_i) \cup \delta_{i-1},\gamma(s_i) \cup
  \delta_{i-1}). \] This is possible because
  $(\delta_{i-1},\Bdown_{s_i} \cup \gamma(s_i))$ is a directed
  separation with interface $\delta_{i-1} \cap \gamma(s_i)$.

  Next, we observe that $(\Bdown_{s_i} \cup \gamma(s_i) \cup
  \delta_{i-1}, \delta_{i-1}')$ is a weak directed separation with
  interface $\delta_{i-1} \cup (\gamma(s_i) \cap \delta_{i-1})$.  Thus
  \cref{thm:computing_types_weak} allows us to compute
  $\TTT(\delta_i',\delta_i)$.

  After the last step we still need to compute $\TTT(\Bdown_t \cup
  \gamma(t),\gamma(t))$ for the parent $t$.  The pair $(\beta(t) \cup
  \gamma(t),\delta_l')$ is a directed separation with interface
  $\delta_l$, which is the final piece to the proof.

  The runtime of this algorithm is $O(f(k+\abs{φ})\cdot n^3)$ for a
  function $f$ because $\abs{V(D)} \leq \abs{V(M)}$, and we consider
  every element $t ∈ V(D)$ at most once.  Every computation of
  $\TTT(\Bdown_t \cup \gamma(t),\gamma(t))$ requires time at most
  linear in $V(D)$ because $t$ has at most that many successors and at
  most quadratic in $V(M)$ because all sets involved are of size
  linear in $V(M)$.
\end{proof}

\subsection{DAG-width}

Next we consider DAG-width~\cite{BerwangerDHKO12}.

\begin{definition}
  A \emph{DAG decomposition} of a digraph $G$ is a pair $\DD \coloneq
  (D, (X_d)_{d∈D})$ such that
  \begin{itemize}
  \item $D$ is a DAG,
  \item $\bigcup_{d∈D} X_d = V(G)$,
  \item For all $d \preceq d' \preceq d''$, $X_d \cap X_{d''} ⊆
    X_{d'}$,
  \item for all edges $(d,d') ∈ E(D)$, $X_d \cap X_{d'}$ guards
    $X_{\succeq d'} \cap X_d$, where $X_{\succeq d'} = \bigcup_{d'
      \preceq d''} X_{d''}$.
  \end{itemize}

  The \emph{width} of $\DD$ is $\max\set[\abs{X_d}]{d ∈ V(D)}$.  The
  \emph{DAG-width} of $G$ is the minimal width of any of its DAG
  decompositions.
\end{definition}

\begin{theorem}\label{thm:lmu_fpt_on_bounded_dag_width}
  There exists an algorithm that solves the $L_μ$ model checking
  problem in time $O(f(k + \abs{φ}) \cdot n^c)$ for some computable
  function $f$ and some constant $c$, where $k$ is the DAG-width and
  $n$ the size of the input structure, provided a DAG decomposition of
  width at most $k$ is given as part of the input.
\end{theorem}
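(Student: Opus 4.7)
The plan is to perform bottom-up dynamic programming over the given DAG decomposition $\DD = (D, (X_d)_{d\in V(D)})$, in close analogy with the proof of \cref{thm:lmu_fpt_on_bounded_kelly_width}. Set $L = \set{\phi}$, fix a sequence $\seq P$ of $k$ fresh proposition symbols, and reuse the abbreviation $\TTT(A,B) \coloneq \set{(v, \ctypeWithIndex{L}{\seq P}{A}{v}{B}) \mid v \in A}$ from that proof. At the price of increasing the width by one, we may assume that $D$ has a unique source $r$ with $v \in X_r$, so that $X_{\succeq r} = V(G)$ and the answer to $M, v \models \phi$ can be read off from $\TTT(V(G), X_r)$.

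Processing $V(D)$ in reverse topological order, the plan is to compute $\TTT(X_{\succeq d}, X_d)$ at every node $d$. The base case is a sink $d$: here $X_{\succeq d} = X_d$ has size at most $k$, so the required types are obtained by brute force. For an internal $d$ with children $d_1,\ldots,d_l$ listed in some fixed order, the inductive invariant to maintain is the set $\TTT(U_i, X_d)$, where $U_i \coloneq X_d \cup \bigcup_{j \leq i} X_{\succeq d_j}$, with the sets $\TTT(X_{\succeq d_j}, X_{d_j})$ already available by the outer induction. The step from $U_i$ to $U_{i+1}$ would use the fact that $(U_i, X_{\succeq d_{i+1}})$ is a weak directed separation of the induced subgraph on $U_{i+1}$ with interface $X_d \cap X_{d_{i+1}}$; then \cref{thm:computing_types_weak} yields $\TTT(U_{i+1}, X_d)$ from the two sets already in hand. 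After $l$ iterations $U_l = X_{\succeq d}$, completing the inductive step.

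The hard part, the genuine novelty over the Kelly-width setting, will be verifying that $(U_i, X_{\succeq d_{i+1}})$ really is a weak separation. In a DAG decomposition the bags do not partition $V(G)$, so two subtrees $X_{\succeq d_j}$ and $X_{\succeq d_{i+1}}$ may share vertices outside of $X_d$ via some common descendant in $D$, and $U_i \cap X_{\succeq d_{i+1}}$ can strictly exceed the nominal interface $X_d \cap X_{d_{i+1}}$. The first clause of \cref{def:weak-dir-separation}, the absence of edges from $X_{\succeq d_{i+1}} \setminus U_i$ into $U_i \setminus X_{\succeq d_{i+1}}$, follows at once from the guarding axiom for the edge $(d, d_{i+1}) \in E(D)$, since the source of such an edge would lie in $X_{\succeq d_{i+1}} \setminus X_d$. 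The second clause, concerning vertices in $(U_i \cap X_{\succeq d_{i+1}}) \setminus (X_d \cap X_{d_{i+1}})$, splits into two cases: a vertex lying in $X_d \cap X_{\succeq d_{i+1}}$ but outside $X_{d_{i+1}}$ is ruled out by the connectivity axiom $d \preceq d_{i+1} \preceq d'' \Rightarrow X_d \cap X_{d''} \subseteq X_{d_{i+1}}$, while a vertex outside $X_d$ is again handled by guarding. The runtime analysis then mirrors that of \cref{thm:lmu_fpt_on_bounded_kelly_width}: $\abs{V(D)}$ is polynomial in $n$, each inductive step invokes \cref{thm:computing_types_weak} on data of size polynomial in $n$, and all remaining bookkeeping is absorbed into a computable factor depending only on $k + \abs{\phi}$, yielding the claimed $O(f(k + \abs{\phi}) \cdot n^c)$ bound.
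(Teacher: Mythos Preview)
Your overall strategy — bottom-up dynamic programming computing $\TTT(X_{\succeq d}, X_d)$ — matches the paper's, but you work directly with an arbitrary DAG decomposition whereas the paper first passes to a \emph{nice} decomposition (unique source, every node has at most two successors, join nodes have $X_d = X_{d_0} = X_{d_1}$, and single-successor nodes change the bag by exactly one vertex). This is not merely cosmetic: it is what makes the invocation of \cref{thm:computing_types_weak} go through.

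The gap in your inductive step is the appeal to \cref{thm:computing_types_weak} for the pair $(U_i, X_{\succeq d_{i+1}})$. You correctly verify that this is a weak separation with interface $X_d \cap X_{d_{i+1}}$, but look at the hypotheses of the theorem: it computes $\ctypeWithIndex{L}{\seq Q}{M}{v}{\seq Y}$ for $v \in M_1$ from the \emph{full structure} $M_1$ together with types in $M_2$. You only have $\TTT(U_i, X_d)$, i.e., types in $M_1 = U_i$, and from the second step on $U_i$ is large. Nor can you simply swap roles: $(X_{\succeq d_{i+1}}, U_i)$ is in general \emph{not} a weak separation, because there may be edges from $X_d \setminus X_{d_{i+1}} \subseteq U_i$ into $X_{\succeq d_{i+1}} \setminus U_i$ (guarding only restricts edges leaving $X_{\succeq d_{i+1}} \setminus X_d$, not edges entering it). So neither orientation of your pair lets the theorem fire with the data you actually possess. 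The paper's nice-decomposition cases are engineered precisely to avoid this: at a join node $X_d = X_{d_0} = X_{d_1}$, both $X_{\succeq d_0}$ and $X_{\succeq d_1}$ are guarded by the common interface $X_d$, so the pair is a weak separation in \emph{both} orientations and types on each side suffice; at an introduce node the left side is the small bag $X_d$ itself, so one has $M_1$ in full; at a forget node one only restricts types via a ``shrink'' map. Your argument can be repaired — for instance by converting to a nice decomposition first, or by using the symmetric pair $(X_{\succeq d_{i+1}} \cup X_d,\, U_i)$ with interface $X_d$, which \emph{is} a weak separation in both orientations — but as written the crucial step does not follow from the theorem you cite.
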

\begin{proof}
  Let $G,v_0$ be a structure and $(D,(X_d)_{d∈V(D)}$ be a nice DAG
  decomposition of $G$.  That means (see
  \cite{BerwangerDHKO12})
  \begin{enumerate}
  \item $D$ has a unique source.
  \item Every $d ∈ V(D)$ has at most two successors.
  \item For $d_0,d_1,d_2 ∈ V(D)$, if $d_1,d_2$ are two successors of
    $d_0$, then $X_{d_0} = X_{d_1} = X_{d_2}$.
  \item For $d_0,d_1 ∈ V(D)$, if $d_1$ is the unique successor of
    $d_0$, then $\abs{(X_{d_0} \setminus X_{d_1}) \cup (X_{d_1}
      \setminus X_{d_0})} = 1$.
  \end{enumerate}

  We set $L \coloneq \set{φ}$.  As in the proof for bounded Kelly
  width, we fix an arbitrary linear order $<$ on $V(G)$ so that we can
  consistently map nodes to the proposition symbols $P_i$ occurring in
  the types.

  During the run of the algorithm, we fill a table $\TTT$ with indices
  from the set $\set{ (v,d) ∈ V(G) \times V(D) \mid v ∈ X_{\succeq
      d}}$ and entries that are elements of $\allTypesWithIndex{\seq
    P}{L}$.  We will write to every index in this table at most once
  during the run, and we will always make sure to write
  \[ \TTT(v,d) = \ctypeWithIndex{L}{\seq P}{G[X_{\succeq
      d}]}{v}{X_d}. \] If $d$ is the root of $D$, then $\TTT(v_0,d)$
  will answer the model checking problem $G,v_0 ⊧ φ$.

  Clearly, we can fill in all values for the leaves $d$ immediately by
  computing them directly.

  Let $d ∈ V(D)$.  If $d$ has two successors $d_0,d_1$, then we have
  $X_d = X_{d_0} = X_{d_1}$.  Then $(G[X_{\succeq d_0]},G[X_{\succeq
    d_1}])$ is a weak directed separation with interface $X_d$.
  Because we already know $\ctypeWithIndex{L}{\seq P}{G[X_{\succeq
      d_i}]}{v}{X_d}$ for all $v$ and $i ∈ \set{0,1}$,
  \cref{thm:computing_types_weak} allows us to compute the types
  $\ctypeWithIndex{L}{\seq P}{G[X_{\succeq d}]}{v}{X_d}$.

  \newcommand{\shrink}{\mathrm{shrink}}

  The other case is that $d$ has a unique successor $d_0$.  Let $X_d =
  \set{v_1,\ldots,v_k}$ be ordered by the global linear order $<$.  If
  $X_{d_0} \setminus X_{d} = \set{v_i}$, then for all $v ∈ X_{\succeq
    X_d}$ we set
  \begin{align*}
    \TTT(v,d) &= \{ \shrink_i(ψ) \mid \\
    &\qquad\text{$ψ ∈ \TTT(v,d_0)$ and $P_i$ does not occur in $ψ$} \},
  \end{align*}
  where $\shrink_i(ψ)$ is a function defined inductively over the
  structure of formulas with the base case
  \begin{align*}
    \shrink_i(P_j) \coloneq
    \begin{cases}
      P_j & \text{if $j < i$}\\
      P_{j-1} & \text{if $j > i$}.
    \end{cases}
  \end{align*}
  In other words, $\shrink_i(ψ)$ is the formula $ψ$ with all $P_j$
  with $j>i$ replaced by $P_{j-1}$ in order to not leave a hole.  It
  is easy to check that we have
  \begin{align*}
    &\{ \shrink_i(ψ) \mid \text{$ψ ∈ \TTT(v,d_0)$ and $P_i$ does not occur in $ψ$} \}\\
    &\hspace{3cm}{}= \ctypeWithIndex{L}{\seq P}{G[X_{\succeq d}]}{v}{X_d}.
  \end{align*}

  The last case is $X_d \setminus X_{d_0} = \set{v_i}$.  Because $X_d
  \cap X_{d_0}$ guards $X_{\succeq d_0} \setminus X_d$, all edges
  $(v,v_i) ∈ G[X_{\succeq d}]$ satisfy $v ∈ X_d$.

  This means we have in fact a directed separation
  $(G[X_d],G[X_{\succeq d_0}])$ with interface $X_{d_0}$.  We know
  $G[X_d]$ (its size is small), and we know the types
  $\ctypeWithIndex{L}{\seq P}{G[X_{\succeq d_0}]}{v}{X_{d_0}}$ for all
  $v ∈ X_{\succeq X_{d_0}}$.

  By \cref{thm:computing_types}, this is all the information we need
  to compute $\ctypeWithIndex{L}{\seq P}{G[X_{\succeq d}]}{v}{X_d}$
  for all $v ∈ X_{\succeq X_d}$, which completes the algorithm and the
  proof.
\end{proof}


\section{Conclusion}
\label{sec:conclusion}

We proved a decomposition theorem for the modal μ-calculus.  This
theorem, interesting already all by itself, further allowed us to
prove fixed-parameter tractability results for the $L_μ$ model
checking problem on classes of bounded Kelly-width or bounded
DAG-width.

Open questions arise from the diverse number of decompositions for
directed graphs.  In particular, we think it could be promising to
analyze D-width~\cite{safari2005} and directed
tree-width~\cite{johnson2001}.


\bibliographystyle{plain}
\bibliography{../refs.bib}

\end{document}